\newtheorem{theorem}{Theorem}[section]
\newtheorem*{theorem*}{Theorem}
\newtheorem{lemma}{Lemma}[section]
\newtheorem{definition}[theorem]{Definition}
\newtheorem{remark}[theorem]{Remark}
\numberwithin{equation}{section}
\newtheorem{claim}[lemma]{Claim}
\begin{document}

\title{From the Landau-de Gennes theory to the Ericksen-Leslie theory in dimension two}

\author{Zhouping Xin}
\address{The Institute of Mathematical Sciences, The Chinese University of Hong Kong, Hong Kong, China}
\email{zpxin@ims.cuhk.edu.hk}
\author{Xiaotao Zhang*}
\address{South China Research Center for Applied Mathematics and Interdisciplinary Studies, South China Normal University, Guangzhou, China}
\email{xtzhang@m.scnu.edu.cn}
\thanks{* Corresponding author.}
\maketitle

\begin{abstract}
In this paper, we study the connection between the Ericksen-Leslie equations and the Beris-Edwards equations in dimension two. It is shown that the weak solutions to the Beris-Edwards equations converge to the one to the Ericksen-Leslie equations as the elastic coefficient tends to zero. Moreover, the limiting weak solutions to the Ericksen-Leslie equations may have singular points.
\end{abstract}

\textbf{Keywords.} Nematic liquid crystals, Ericksen-Leslie model, Beris-Edwards model, Weak convergence, Relationships of different liquid crystals theories.\\
\textbf{Mathematics Subject Classification.}  76N10, 35Q35, 35Q30.

\section{introduction}
Liquid crystals are states of matter between conventional liquid and solid crystal, they may flow like a liquid, but their molecules may be oriented in a crystal-like way.
In physics, different order parameters are introduced to characterize the anisotropic behavior of liquid crystals, which lead to different theories. There are several competing mathematical theories for nematic liquid crystals in the literature, such as the Oseen-Frank theory \cite{O33,F58}, the Erickse-Leslie theory \cite{E61,L68}, the Landau-de Gennes theory \cite{D74}, and the Doi-Onsager theory \cite{DE86,O49}. The Oseen-Frank theory and the Erickse-Leslie theory are vector theories, in which the average direction of the liquid crystal molecules at a certain point is described by a unit vector. The Landau-de Gennes theory uses a $3\times 3$ symmetric traceless tensor $Q$ as the order parameter to describe the orientation of liquid crystal molecules. The Doi-Onsager theory is a molecular theory where the molecule has a continuous distribution of orientations. As these theories are derived from different considerations and are widely used in liquid crystal studies, it is important to explore the relationships of different theories.

The Ericksen-Leslie equations can be derived from the Doi-Onsager equations by taking small Deborah number limit in dimension three. This limit was formally derived in \cite{KD83,EZ06}, and rigorously justified before the first singular time of the Ericksen-Leslie equations in \cite{WZZ151}. It is noted that a new dynamic Q-tensor model was derived from the Doi's kinetic theory in \cite{HLWZZ15}, and the Ericksen-Leslie model can also be formally derived from this dynamic Q-tensor model in dimension three, which was later rigorously justified for smooth solutions in \cite{LWZ15}. Similarly, in dimension three, a rigorous derivation of the Ericksen-Leslie equations from the Beris-Edwards model \cite{BE94} in the Landau-de Gennes framework was given in \cite{WZZ15}, and similar result was established in dimension three recently in \cite{WL20} concerning the connection between the Ericksen-Leslie equations and the Qian-Sheng model in Landau-de Gennes framework. It should be noted that all these results have been established under the main assumption that the solutions to the Ericksen-Leslie equations are suitable smooth. If one takes no account of the velocity of the fluid, the Beris-Edwards model becomes the $Q$-tensor flows, and the Ericksen-Leslie system becomes the harmonic map flows. Moreover, in dimension three, \cite{WWZ17} has shown the connection between the solutions to the Q-tensor flows and the weak solutions to the harmonic map flows which may contain singular points. However, non-trivial singular weak solutions (with non-trivial velocity) to the Ericksen-Leslie equations do exist, see \cite{HLLW16} for three-dimensional case and \cite{LLWWZ19} for two-dimensional case. Our main goal in this paper is to study the connections between the solutions to the Beris-Edwards model and the weak solutions to the Ericksen-Leslie equations in dimension two. Note that the weak solutions to the Ericksen-Leslie equations may contain singular points.

\subsection{Notations} The following convertions will be used. $\Omega_T= (0,T)\times \mathbb{R}^2$ for $0<T<+\infty$, and $$||f||_{L^q_tL^p_x}^q=\int_0^T||f(t,\cdot)||_{L^p(\mathbb{R}^2)}^qdt, \,||f||_{L^p_tH^m_x}^p=\int_0^T ||f(t,\cdot)||_{H^m(\mathbb{R}^2)}^pdt,\,||f||_{L^p(\Omega_T)}=||f||_{L^p_tL^p_x},$$
for $p ,q\in[1,\infty]$. For any two vectors $m=(m_1, m_2, m_3), n=(n_1,n_2,n_3) \in \mathbb{R}^3$, $m\otimes n=[m_in_j]_{1\leq i,j\leq 3}$. $A\cdot B$ denotes the usual matrix/vector-matrix/vector product. Einstein summation is used throughout the paper. $A:B=A_{ij}B_{ij}$ and $|A|=\sqrt{A:A}$. The divergence of a tensor is defined by $(\nabla \cdot \sigma)_i=\partial_j\sigma_{ij}$, where $\partial_jf=\partial_{x_j}f$. $(\nabla A\odot \nabla A)_{ij}=\partial_i A : \partial_j A$ for matrix $A$ and $(\nabla d\odot \nabla d)_{ij}=\partial_i d \cdot \partial_j d$ for vector $d$. $\mathbb{I}$ denotes the $3\times 3$ identity matrix.  $\mathbb{S}^2=\{d\in \mathbb{R}^2, |d|=1\}$. For simplicity, the subsequences of $\{(v^\epsilon,Q^\epsilon)\}_{\epsilon>0}$ and $\{Q^\epsilon\}_{\epsilon>0}$ are still denoted as $\{(v^\epsilon,Q^\epsilon)\}_{\epsilon>0}$ and $\{Q^\epsilon\}_{\epsilon>0}$. Let $\mathcal{Q}_0\subset M^{3\times3}$ denote the space of $Q$-tensors i.e.
$$\mathcal{Q}_0=\{Q\in M^{3\times 3}, \text{tr}\, Q=0, Q_{ij}=Q_{ji}, i,j=1,2,3\}.$$
Set
$$\mathcal{D}=C^\infty_0(\mathbb{R}^2, \mathbb{R}^3) \cap \{\phi=(\phi_1,\phi_2,\phi_3)^T :\partial_1\phi_1 + \partial_2 \phi_2 =0\},$$
$$\mathring{H}=\text{closure of } \mathcal{D} \text{ in } L^2 (\mathbb{R}^2,\mathbb{R}^3), \quad\mathring{J}=\text{closure of }\mathcal{D} \text{ in } H^1 (\mathbb{R}^2,\mathbb{R}^3),$$
and
$$\overline{\nabla F} =\left(
              \begin{array}{ccc}
                \partial_1 F_1 & \partial_2 F_1 & 0 \\
                \partial_1 F_2 & \partial_2 F_2 & 0 \\
                \partial_1 F_3 & \partial_2 F_3 & 0 \\
              \end{array}
            \right), \quad \underline{\nabla F} =\left(
                                                \begin{array}{cc}
                                                  \partial_1 F_1 & \partial_2 F_1 \\
                                                  \partial_1 F_2 & \partial_2 F_2 \\
                                                \end{array}
                                              \right)
             \text{ for } F:(0,T)\times \mathbb{R}^2 \mapsto \mathbb{R}^3.
$$
For any $Q\in \mathcal{Q}_0$, since $Q_{ii}=0$, it holds that
$$Q=s_1(\tilde{d}\otimes \tilde{d}-\frac13 \mathbb{I})+s_2(\hat{d}\otimes \hat{d}-\frac13 \mathbb{I}),$$
where $s_1,s_2\in \mathbb{R}$ and $\tilde{d}, \hat{d}\in \mathbb{S}^2$ are the eigenvectors of $Q$ satisfying $\tilde{d}\cdot\hat{d}=0$. When $s_1=s_2=0$, the nematic liquid crystal is said to be isotropic. When $s_1,s_2$ are different and nonzero, it is said to be biaxial. When $s_1=s_2\neq 0$ or $s_1=0,s_2\neq 0$ or $s_1\neq 0, s_2=0$, it is said to be uniaxial and $Q$ can be rewritten as
$$Q=s(\bar{d}\otimes \bar{d}-\frac13 \mathbb{I}), \quad \bar{d}\in \mathbb{S}^2.$$

\subsection{The Landau-de Gennes theory}
Let $Q\in \mathcal{Q}_0$. For the bulk energy density
$$F_b(Q)=-\frac{a}{2} |Q|^2 -\frac{b}{3} \text{tr} Q^3 +\frac{c}{4}|Q|^4,$$
one can verify that if $c>0$, then $F_b(Q)$ is bounded from below(\cite[Proposition 1]{M10}). Note that $a,b$ and $c$ are material-dependent and temperature-dependent constants. Furthermore, $F_b$ attains its minimum on the uniaxial $Q$-tensor with constant order parameter
$$s_+=\frac{b+\sqrt{b^2+24ac}}{4c}.$$ Thus, $F_b$ has a corresponding non-negative bulk energy density $\hat{F}_b$ defined by
\begin{equation}\label{hatF}
\hat{F}_b(Q)=F_b(Q)-\min_{Q\in \mathcal{Q}_0} F_b(Q),
\end{equation}
and
\begin{equation}\label{uniaxial}
\hat{F}_b(Q)=0 \Longleftrightarrow  Q\in \mathcal{N},
\end{equation}
where
$$\mathcal{N}=\left\{ Q\in \mathcal{Q}_0, Q=s_+(d\otimes d -\frac13 \mathbb{I}), d\in \mathbb{S}^2\right\}.$$
Then we define the following Landau-de Gennes energy functional $\mathcal{F}$ as
\begin{equation*}\label{L-D}
  \mathcal{F}(Q,\nabla Q)=\hat{\mathcal{F}}_b(Q)+\mathcal{F}_e(Q,\nabla Q),
\end{equation*}
where $\hat{\mathcal{F}}_b$ and $\mathcal{F}_e$ are the bulk energy and the elastic energy defined respectively by
\begin{equation*}\label{Fb}
  \hat{\mathcal{F}}_b(Q)=\int_{\mathbb{R}^n} \hat{F}_b(Q)dx ,
\end{equation*}
\begin{equation*}
  \mathcal{F}_e(Q,\nabla Q)=\frac12\int_{\mathbb{R}^n} \left( L_1 |\nabla Q|^2 +L_2 Q_{ij,j}Q_{ik,k}+L_3 Q_{ij,k}Q_{ik,j} +L_4 Q_{ij}Q_{kl,i}Q_{kl,j}\right)dx,
\end{equation*}
with $L_1,L_2,L_3$ and $L_4$ being material dependent elastic constants and $n=2,3$.

The Beris-Edwards model in $\mathbb{R}^3$ takes the form:
\begin{eqnarray}
  v_t+v\cdot \nabla v &=& -\nabla P+\nabla \cdot(\sigma^s+\sigma^a+\sigma^d), \label{Q1}\\
  \nabla \cdot v&=& 0,\label{Q2}\\
  Q_t +v\cdot \nabla Q&=& \frac{1}{\Gamma} H+R(\nabla v, Q),\label{Q3}
\end{eqnarray}
where $v:(0,T)\times\mathbb{R}^3 \mapsto \mathbb{R}^3$ is the velocity of the fluid, $Q:(0,T)\times\mathbb{R}^3 \mapsto \mathcal{Q}_0$ is the macroscopic $Q$-tensor order parameter, $P:(0,T)\times\mathbb{R}^3 \mapsto \mathbb{R}$ is the pressure, $\Gamma$ is a collective
rotational diffusion constant, $D=\frac12(\nabla v+(\nabla v)^T)$, $\Lambda=\frac12(\nabla v-(\nabla v)^T)$,
$\sigma^s, \sigma^a$ and $\sigma^d$ are the symmetric viscous stress, antisymmetric
viscous stress, and distortion stress, respectively, defined by
$$\sigma^s=\eta D-S_Q(H), \quad \sigma^a=Q\cdot H-H\cdot Q, \quad
 \sigma_{ij}^d=-\frac{\delta \mathcal{F}}{\delta Q_{kl,j}} Q_{kl,i},$$
 \begin{equation}\label{SQ-HD}
   S_Q(A)=\xi\left[ A\cdot(Q+\frac13\mathbb{I})+(Q+\frac13 \mathbb{I})\cdot A-2(Q+\frac13\mathbb{I})Q:A\right], \text{ for } Q, A\in M^{3\times 3},
 \end{equation}
where $\eta $ is the viscous coefficient, $H$ is the molecular field given by
$H(Q) =-\frac{\delta \mathcal{F}}{\delta Q},$ $\xi$ is a constant depending on the molecular details of a given liquid crystal and measures the ratio between the tumbling and the aligning effect that a shear flow would exert over the liquid crystals directors.
$R(\nabla v, Q)$ describes the rotating and stretching effects on the order parameter $Q$ due to the fluid, $R(\nabla v, Q)$ is defined by
$$R(\nabla v, Q)= S_Q(D)+\Lambda \cdot  Q -Q\cdot \Lambda.$$

There have been quite many works on the global existence of solutions to the system (\ref{Q1})-(\ref{Q3}), see \cite{PZ12,PZ11,CRWX16,LWX19,GR15,ADL14,W15} and the references therein. In particular, the first existence of global weak solutions to the cauchy problem for (\ref{Q1})-(\ref{Q3}) in the dimension two (2D) and dimension three (3D) is established by Paicu-Zarnescu in \cite{PZ12} under the conditions that $\xi=0$ and
\begin{equation}\label{QCon2}
  L_2=L_3=L_4=0,\quad c>0, \quad \eta>0, \quad \Gamma >0,
\end{equation}
where they also showed the existence of global regular solutions in 2D for suitably regular initial data, and the condition $\xi=0$ can be relaxed to $|\xi|$ being small in \cite{PZ11}. These results in  \cite{PZ12,PZ11} have been generalized to many interesting cases. In particular, for 2D periodic initial data, the global well-posedness of strong solutions to (\ref{Q1})-(\ref{Q3}) was obtained in \cite{CRWX16} under just condition (\ref{QCon2}), which was relaxed to allow $L_2, L_3$ and $L_4$ being non-zero with some other minor conditions recently in \cite{LWX19}; and for initial-boundary value problems for 2D and 3D, the global existence of weak solutions to the system (\ref{Q1})-(\ref{Q3}) has been proved in \cite{GR15,ADL14} under conditions that $\xi=0$ and (\ref{QCon2}) holds. Similar results have been obtained in \cite{W15}, where the bulk potential (\ref{Fb}) is replaced by Ball-Majumdar type bulk potential.

In this paper, we assume that condition (\ref{QCon2}) holds. Since the elastic constant $L_1$ is typically very small compared with $a,b$ and $c$, one can introduce a small parameter $\epsilon$, and consider the following Landau-de Gennes energy functional:
\begin{equation}\label{F-s}
\mathcal{F}_\epsilon(Q,\nabla Q)=\frac{1}{\epsilon}\int_{\mathbb{R}^2}\hat{F}_b(Q)dx+\frac{L_1}{2}\int_{\mathbb{R}^2} |\nabla Q|^2dx.
\end{equation}

Thus, we look for a solution $(v^\epsilon,Q^\epsilon)$ to (\ref{Q1})-(\ref{Q3}) which is independent of $x_3$. Then, $v^\epsilon=(v^\epsilon_1,v^\epsilon_2,v^\epsilon_3)^T: (0,T)\times\mathbb{R}^2\mapsto \mathbb{R}^3$ and $Q^\epsilon:(0,T)\times \mathbb{R}^2 \mapsto \mathcal{Q}_0$ satisfy
\begin{equation}\label{addxin}
 \partial_3 v^\epsilon =0,\quad \partial_3 Q ^\epsilon=0,\quad \partial_3 P^\epsilon=0.
\end{equation}
Then in this case, the system (\ref{Q1})-(\ref{Q3}) is reduced to the following two-dimensional one:
\begin{equation}\label{QQ}\left
\{\begin{array}{l}
\large{\partial_t v^\epsilon_i+v^\epsilon_j(\overline{\nabla v^\epsilon})_{ij} = -\partial_i P^\epsilon +\sum_{j=1}^2\partial_j[\eta \overline{D^\epsilon}-S_{Q^\epsilon}(H^\epsilon) +Q^\epsilon\cdot H^\epsilon-H^\epsilon \cdot Q^\epsilon]_{ij}}\\
\large{ \quad\quad\quad\quad\quad\quad\quad\,\,\,\,\,-L_1\sum_{k=1}^2\partial_k(\nabla Q^\epsilon \odot \nabla Q^\epsilon)_{ik},\quad i=1,2,}\\
\large{\partial_tv_3^\epsilon+v^\epsilon_j (\overline{\nabla v^\epsilon})_{3,j} = \sum_{j=1}^2\partial_j[\eta \overline{D^\epsilon}-S_{Q^\epsilon}(H^\epsilon) +Q^\epsilon\cdot H^\epsilon-H^\epsilon \cdot Q^\epsilon]_{3j},}\\
\large{\partial _1 v_1^\epsilon +\partial_2 v_2^\epsilon= 0,}\\
\large{Q_t^\epsilon +\underline{v^\epsilon}\cdot \nabla Q^\epsilon= \frac{1}{\Gamma} H^\epsilon+S_{Q^\epsilon}(\overline{D^\epsilon})+\overline{\Lambda^\epsilon} \cdot  Q^\epsilon -Q^\epsilon\cdot \overline{\Lambda^\epsilon},}
\end{array}
\right.
\end{equation}
where $P^\epsilon:(0,T)\times\mathbb{R}^2 \mapsto \mathbb{R}$, and
\begin{equation}\label{X-1}
  \overline{D^\epsilon}=\frac{\overline{\nabla v^\epsilon}+(\overline{\nabla v^\epsilon})^T}{2},\quad \overline{\Lambda^\epsilon}=\frac{\overline{\nabla v^\epsilon}-(\overline{\nabla v^\epsilon})^T}{2}, \quad \underline{v^\epsilon}=(v_1^\epsilon, v_2^\epsilon)^T,
\end{equation}
\begin{equation}\label{H}
  H^\epsilon=L_1\Delta Q^\epsilon -\frac{\mathcal{J}(Q^\epsilon)}{\epsilon},
\end{equation}
\begin{equation}\label{J}
   \mathcal{J}(Q)=\frac{\delta \hat{\mathcal{F}}_b(Q)}{\delta Q}=-aQ-b(Q^2-\frac13 |Q|^2\mathbb{I})+c|Q|^2Q,
\end{equation}
with $S_{Q^\epsilon}(H^\epsilon)$ and $S_{Q^\epsilon}(\overline{D^\epsilon})$ given in $(\ref{SQ-HD})$. Note that $\Delta Q^\epsilon$ in (\ref{H}) is equal to $\sum_{i=1}^2 \frac{\partial^2 Q^\epsilon}{\partial x_i^2}$ due to $Q^\epsilon:(0,T)\times \mathbb{R}^2 \mapsto \mathcal{Q}_0$.
\begin{remark}
Let $\mathbb{T}^1$ denote the periodic interval with period $A>0$ and $\Omega^\prime=\mathbb{R}^2 \times \mathbb{T}^1$. It can be checked that the solution $(v,Q)$ to the system $(\ref{Q1})$-$(\ref{Q3})$ is unique when $v,\nabla Q\in L^\infty(0,T;L^2(\Omega^\prime)) \cap L^2(0,T;H^1(\Omega^\prime))\cap L^\infty(0,T;W^{1,\infty}(\Omega^\prime))$. Then, the condition $(\ref{addxin})$ can be guaranteed by $\partial_3 v^\epsilon_0=0$ and $\partial_3 Q^\epsilon_0=0$ $(v^\epsilon|_{t=0} =v^\epsilon_0, Q^\epsilon|_{t=0} =Q_0^\epsilon)$ if there exists a global smooth solution $(v^\epsilon,Q^\epsilon)$ to the system $(\ref{QQ})$ with $v^\epsilon, \nabla Q^\epsilon \in L^\infty(0,T;L^2(\mathbb{R}^2)) \cap L^2(0,T;H^1(\mathbb{R}^2))\cap L^\infty(0,T;W^{1,\infty}(\mathbb{R}^2))$, which will be given in our forthcoming paper. Meanwhile, it should be noted that this two-dimensional system $(\ref{QQ})$ includes the two-dimensional system in \cite{PZ12,PZ11,CRWX16,LWX19,GR15,ADL14,W15}, in which $v:(0,T)\times \mathbb{R}^2\mapsto \mathbb{R}^2$, $Q: (0,T)\times \mathbb{R}^2\mapsto  \mathcal{Q}_1$ and $\mathcal{Q}_1=\{Q\in \mathbb{M}^{2\times 2}, \text{tr} \, Q=0, Q_{ij}=Q_{ji}, i,j=1,2\}$. Moreover, under conditions $(\ref{QCon2})$ and $|\xi|$ being sufficiently small, Paicu-Zarnescu \cite{PZ12} proved the existence of global regular solutions with sufficiently regular initial data for this two-dimensional problem.
\end{remark}

Corresponding to (\ref{QQ}), the initial data for $(v^\epsilon,Q^\epsilon)$ can be taken as
\begin{equation}\label{Q-IB}
v^\epsilon|_{t=0}=v_0^\epsilon \in \mathring{J},\quad Q^\epsilon|_{t=0}=Q_0^\epsilon, \quad Q_0^\epsilon-Q^\infty\in H^2(\mathbb{R}^2, \mathcal{Q}_0),
\end{equation}
where $Q^\infty=s_+(d^\infty\otimes d^\infty -\frac13 \mathbb{I})$ for the constant vector $d^\infty \in \mathbb{S}^2$. Then, the energy inequality of the Beris-Edwards system (\ref{QQ}) corresponding to the initial data (\ref{Q-IB}) is read as:
\begin{eqnarray}
   && \int_{\mathbb{R}^2} \left(\frac12 |v^\epsilon|^2 +\frac{L_1}{2} |\nabla Q^\epsilon|^2 +\frac{\hat{F}_b(Q^\epsilon)}{\epsilon}\right)(\cdot,t)dx +\int_0^t\int_{\mathbb{R}^2} \left(\eta|\overline{D^\epsilon}|^2 +\frac{1}{\Gamma}|H^\epsilon|^2\right)dxdt \nonumber\\
  &\leq& \int_{\mathbb{R}^2} \left(\frac12 |v^\epsilon_0|^2 +\frac{L_1}{2} |\nabla Q^\epsilon_0|^2 +\frac{\hat{F}_b(Q_0^\epsilon)}{\epsilon}\right)dx\label{Q-energy}
\end{eqnarray}
for $t\in(0,T)$, see \cite[Proposition 1]{PZ11} for the detailed derivation of (\ref{Q-energy}).
It is assumed further that
\begin{equation}\label{QSQ}
  v_0^\epsilon \rightarrow v_0^* \text{ in } L^2(\mathbb{R}^2,\mathbb{R}^3), \quad Q_0^\epsilon-Q^\infty \rightarrow Q_0^*-Q^\infty \text{ in } H^1(\mathbb{R}^2, \mathcal{Q}_0), \quad \int_{\mathbb{R}^2} \frac{\hat{F}_b(Q^\epsilon_0)}{\epsilon} \rightarrow 0,
\end{equation}
as $\epsilon \rightarrow 0$, where $Q^*_0=s_+(d_0^*\otimes d_0^*-\frac13 \mathbb{I})$. 
Note that (\ref{QSQ}) implies that there exists $E_0>0$ such that for suitably small and positive $\epsilon$, it holds that
\begin{equation}\label{E01}
  \int_{\mathbb{R}^2} \left(\frac12 |v^\epsilon_0|^2 +\frac{L_1}{2} |\nabla Q^\epsilon_0|^2 +\frac{\hat{F}_b(Q_0^\epsilon)}{\epsilon}\right)dx \leq E_0.
\end{equation}

\subsection{The Ericksen-Leslie theory}
The general Ericksen-Leslie system in $\mathbb{R}^3$ takes the form
\begin{eqnarray}
  v_t +v\cdot \nabla v +\nabla P&=&  \nabla \cdot \sigma, \label{G-EL1}\\
  \nabla \cdot v &=& 0, \label{G-EL2}\\
  d\times (h-\gamma_1 N-\gamma_2D\cdot d) &=& 0 \label{G-EL3},
\end{eqnarray}
where $v:(0,T)\times\mathbb{R}^3 \mapsto \mathbb{R}^3$ is the velocity of the fluid, $P:(0,T)\times\mathbb{R}^3 \mapsto \mathbb{R}$ is the pressure, $d:(0,T)\times\mathbb{R}^3 \mapsto \mathbb{S}^2$ is the macroscopic orientation of the nematic liquid crystal molecules, and the stress $\sigma$ is modeled by the phenomenological constitutive relation
$$\sigma=\sigma^L+\sigma^E.$$
$\sigma^L$ is the viscous (Leslie) stress given by
\begin{equation}\label{L-stress}
  \sigma^L=\alpha_1(d\otimes d : D)d\otimes d+\alpha_2 N\otimes d +\alpha_3 d\otimes N   +\alpha_4 D+\alpha_5 D\cdot (d\otimes d) +\alpha_6  (d\otimes d)\cdot D
\end{equation}
with
$$N=d_t +v\cdot \nabla d -\Lambda \cdot d.$$
The six viscous coefficients $\alpha_1,\cdots , \alpha_6$ are called the Leslie coefficients. $\sigma^E$ is the elastic (Ericksen) stress
\begin{equation}\label{E-stress}
  \sigma_{ij}^E=-\frac{\partial E_{OF}}{\partial d_{k,j}}d_{k,i},
\end{equation}
where $E_{OF}=E_{OF}(d,\nabla d)$ is the Oseen-Frank energy density with the form
\begin{equation*}\label{OF}
  E_{OF}=\frac{k_1}{2}(\nabla \cdot d)^2 +\frac{k_2}{2}|d\cdot (\nabla \times d)|^2 +\frac{k_3}{2}|d\times (\nabla \times d )|^2
\end{equation*}
$$+\frac{k_2+k_4}{2}\left[\text{tr} (\nabla d)^2 -(\nabla \cdot d)^2\right].\qquad\qquad\quad$$
Here $k_1,k_2,k_3$ and $k_4$ are the elastic constants. The molecular field $h$ is given by
\begin{equation*}\label{h}
  h= -\frac{\delta \mathcal{E}_{OF}}{\delta d}, \quad \mathcal{E}_{OF}=\int_{\mathbb{R}^3} E_{OF} dx.
\end{equation*}

In order to obtain a basic energy law to the system (\ref{G-EL1})-(\ref{G-EL3}), one requires the Leslie coefficients, $\gamma_1$, and $\gamma_2$ to satisfy the following relations:
\begin{equation}\label{Les1}
  \alpha_2+\alpha_3=\alpha_6-\alpha_5,
\end{equation}
\begin{equation}\label{Les2}
  \gamma_1=\alpha_3-\alpha_2, \quad \gamma_2=\alpha_6-\alpha_5,
\end{equation}
where (\ref{Les1}) is called Parodi's relation.

For the system (\ref{G-EL1})-(\ref{G-EL3}), the local well-posedness in dimension three has been proved in \cite{WZZ12} under the physical constraints on the Leslie coefficients (\ref{Les1})-(\ref{Les2}), which ensure that the energy of the system is dissipated. When $v_3=0$, $\partial_3v=0$, $d_3=0$ and $\partial_3 d=0$, the system (\ref{G-EL1})-(\ref{G-EL3}) becomes a two-dimensional one.  For this case, the global weak solution has been shown in \cite{HLW12} under the conditions that {(\ref{Les1}) and (\ref{Les2}) hold and $k_1=k_2=k_3=1, k_4=0$ and
\begin{equation}\label{Les3}
  \gamma_1>0,\quad \alpha_1+\frac{\gamma_2^2}{\gamma_1}\geq 0, \quad \alpha_4> 0,\quad \alpha_5+\alpha_6-\frac{\gamma_2^2}{\gamma_1}\geq 0.
\end{equation}
Similar results have been obtained in \cite{WW14} under weaker conditions that (\ref{Les1}), (\ref{Les2}), $v_3=0, \partial_3 v=0, \partial_3 d=0,  \partial_1 \sigma^L_{31}+\partial_2 \sigma_{32}^L=0$ and $\min\{k_1,k_2,k_3\} >0$ and
\begin{equation}\label{Les4}
  \beta_2\geq 0, \beta_1+2\beta_2+\beta_3\geq 0, \beta_1<0 \text{ or } \beta_2\geq 0, 2\beta_2+\beta_3\geq0, \beta_1\geq0,
\end{equation}
where $\beta_1=\alpha_1+\frac{\gamma_2^2}{\gamma_1}, \beta_2=\alpha_4, \beta_3=\alpha_5+\alpha_6-\frac{\gamma_2^2}{\gamma_1}$.

For the simplified Ericksen-Leslie system:
\begin{equation}\label{A2}\left
\{\begin{array}{l}
\large{v_t-\nu\Delta v+v\cdot\nabla v +\nabla P=-\lambda\nabla \cdot(\nabla d \odot \nabla d),}\\
\large{\nabla \cdot v=0,}\\
\large{d_t+v\cdot \nabla d=\gamma(\Delta d+|\nabla d|^2 d).}\\
\end{array}
\right.
\end{equation}
where $v: (0,T)\times\mathbb{R}^n \mapsto \mathbb{R}^n$, $d:(0,T)\times\mathbb{R}^n \mapsto \mathbb{S}^2$, $n=2,3$ and $\nu,\lambda,\gamma$ are constants. This system was proposed first by Lin \cite{Lin89} in 1989. It has been shown in \cite{LLW10,H11} that global weak solutions to (\ref{A2}) in dimension two exist, which are smooth with possible exceptions of finitely many singular times. Similar results for more general case were obtained in \cite{HX12,HLW12,WW14}. For the uniqueness of this kind of weak solutions, we refer to \cite{LW10,LTX16}. For the three dimensional case,
global weak solutions with the initial data $d_0\in \mathbb{S}^2_+$ have been obtained in \cite{LW16}, which are weak limits of sequences of weak solutions to the Ginzburg-Landau approximate equations of (\ref{A2}) (see (\ref{A0}) below). Note that the weak solutions to (\ref{A2}) with smooth initial data may not be smooth. In fact, two examples of weak solutions of finite time singularity in dimension three have been constructed in  \cite{HLLW16}. Recently, weak solutions with finite time singularities in dimension two have been constructed in \cite{LLWWZ19}. For the blow-up criteria of strong solutions to the Ericksen-Leslie system, we refer to \cite{HW12,HLX14} and the references therein.

It should be noted that there are many studies on the following Ginzburg-Landau type approximation of the simplified Ericksen-Leslie system,
\begin{equation}\label{A0}\left
\{\begin{array}{l}
\large{v_t^\epsilon-\nu\Delta v^\epsilon+v^\epsilon\cdot\nabla v^\epsilon +\nabla P^\epsilon=-\lambda\nabla \cdot(\nabla d^\epsilon \odot \nabla d^\epsilon),}\\
\large{\nabla \cdot v^\epsilon=0,}\\
\large{d_t^\epsilon+v^\epsilon\cdot \nabla d^\epsilon=\gamma\left(\Delta d^\epsilon+ \frac{(1-|d^\epsilon|^2)d^\epsilon}{\epsilon}\right).}\\
\end{array}
\right.
\end{equation}
For (\ref{A0}) with fixed $\epsilon >0$ and the initial data $(v^\epsilon,d^\epsilon)|_{t=0} =(v_0,d_0)$, $v_0\in L^2(\mathbb{R}^n,\mathbb{R}^n), d_0\in \mathbb{S}^2, d_0-d^\infty\in H^1(\mathbb{R}^n,\mathbb{R}^3), n=2,3$, the global existence of weak solutions (even strong solutions for $n=2$) have been established by Lin-Liu \cite{LL95} (see also the extension to the case with Leslie stress \cite{LL00}). Such solutions satisfy the following energy inequality
$$\frac12\int_{\mathbb{R}^n} \left( |v^\epsilon|^2+ |\nabla d^\epsilon|^2 +\frac{(1-|d^\epsilon|^2)^2}{2\epsilon}\right) +\int_0^T\int_{\mathbb{R}^n} \left(|\nabla v^\epsilon|^2 +\left|\Delta d^\epsilon +\frac{(1-|d^\epsilon|^2)d^\epsilon}{\epsilon}\right |^2\right)\leq G_0$$
with $G_0=\frac12\int_{\mathbb{R}^n}( |v_0|^2 +|\nabla d_0|^2)dx$. This then implies that as $\epsilon\rightarrow 0^+$, $|d^\epsilon|\rightarrow 1$ as $\epsilon \rightarrow 0^+$ a.e., and $(v^\epsilon,d^\epsilon)$ is expected to converge to a solution of (\ref{A2}). Indeed, this convergence has been shown in \cite{H11,HLX14} on the time interval where the solution to (\ref{A2}) remains regular, and the methods in \cite{H11,HLX14} depend crucially on the regularity of the strong solutions to (\ref{A2}). However, the extension of this approach in \cite{H11,HLX14} to larger times seems impossible due to the existence of singular weak solutions to the Ericksen-Leslie equations \cite{HLLW16,LLWWZ19}. In this respect,  Kortum \cite{K2020} proved the convergence of the weak solutions to (\ref{A0}) to the global-in-time weak solutions to  (\ref{A2}) in two dimensional torus $\mathbb{T}^2$. The convergence of weak solutions to the Ginzburg-Landau approximation of the two dimensional simplified Ericksen-Leslie equations for both uniaxial and biaxial nematics has been obtained by Du-Huang-Wang \cite{DHW20}.

For a solution $(v^\epsilon,Q^\epsilon)$ to (\ref{QQ}) and (\ref{Q-IB}), due to (\ref{uniaxial}), the energy inequality (\ref{Q-energy}) and the condition (\ref{QSQ}), one may expect that $(v^\epsilon,Q^\epsilon)\rightarrow (v^*,Q^*)$ with $Q^*=s_+(d^*\otimes d^*-\frac13 \mathbb{I})$ as $\epsilon\rightarrow 0^+$, and $(v^*,d^*)$ is a solution to the Ericksen-Leslie system with the coefficients satisfying
\begin{equation}\label{MCon2}
  k_1=k_2=k_3=2L_1s_+^2,\quad k_4=0,\quad \gamma_1=2\Gamma s_+^2, \quad \gamma_2=-\frac{2\Gamma \xi s_+(s_+ +2)}{3},
\end{equation}
\begin{equation}\label{MCon3}
 \alpha_1=-\frac{2\Gamma \xi^2 s_+^2(3-2s_+)(1+2s_+)}{3},\quad \alpha_2 =-\Gamma s_+^2 -\frac{\Gamma \xi s_+(2+s_+)}{3},
\end{equation}
\begin{equation}\label{MCon4}
\alpha_3=\Gamma s_+^2 -\frac{\Gamma \xi s_+ (2+s_+)}{3}, \quad \alpha_4 =\eta +\frac{4\Gamma \xi^2(1-s_+)^2}{9},
\end{equation}
\begin{equation}\label{MCon5}
\alpha_5 = \frac{\Gamma \xi^2 s_+(4-s_+)}{3} +\frac{\Gamma \xi s_+(2+s_+)}{3}, \quad \alpha_6 =\frac{\Gamma\xi^2 s_+(4-s_+)}{3} -\frac{\Gamma \xi s_+(2+s_+)}{3}.
\end{equation}
Furthermore, the solution $(v^*,d^*)$ to the limiting Ericksen-Leslie system must satisfy
\begin{equation}\label{addxin3}
 \partial_3 v^*=0, \quad \partial_3 d^*=0 \text{ and } \partial_3 P^*=0
\end{equation}
due to $(v^\epsilon, Q^\epsilon)$ satisfying (\ref{addxin}). Thus, as (\ref{QQ}), $(v^*,d^*)$ solves the following two-dimensional system
\begin{equation}\label{dd}\left
\{\begin{array}{l}
\large{\partial_t v^*_i+v^*_j(\overline{\nabla v^*})_{ij} = -\partial_i P^* +\sum_{j=1}^2\partial_j (\overline{\sigma_*^L})_{ij} -k_1\sum_{l=1}^2\partial_l(\nabla d^* \odot \nabla d^*)_{il},\quad i=1,2,}\\
\large{\partial_tv_3^*+v^*_j (\overline{\nabla v^*})_{3,j} = \sum_{j=1}^2\partial_j(\overline{\sigma_*^L})_{3j},}\\
\large{\partial _1 v_1^* +\partial_2 v_2^*= 0,}\\
\large{k_1(\Delta d^*+|\nabla d^*|^2 d^*) -\gamma_1  \overline{N^*}- \gamma_2 [\overline{D^*}\cdot d^*-(\overline{D^*}:d^*\otimes d^*)d^*]=0,}
\end{array}
\right.
\end{equation}
where $v^*:(0,T)\times \mathbb{R}^2 \mapsto \mathbb{R}^3$, $d^*:(0,T)\times \mathbb{R}^2 \mapsto \mathbb{S}^2$, $P^*:(0,T)\times \mathbb{R}^2 \mapsto \mathbb{R}$,
\begin{equation}\label{X-2}
\overline{D^*}=\frac{\overline{\nabla v^*}+(\overline{\nabla v^*})^T}{2},\, \overline{\Lambda^*}=\frac{\overline{\nabla v^*}-(\overline{\nabla v^*})^T}{2}, \, \underline{v^*}=(v^*_1,v^*_2)^T,\,\overline{N^*}=d_t^*+\underline{v^*}\cdot\nabla d ^*-\overline{\Lambda^*}\cdot d^*,
\end{equation}
and
$$\overline{\sigma_*^L}=\alpha_1(d^*\otimes d^* : \overline{D^*})d^*\otimes d^*+\alpha_2 \overline{N^*}\otimes d^*+\alpha_3 d^*\otimes \overline{N^*} +\alpha_4 \overline{D^*}+\alpha_5 \overline{D^*}\cdot (d^*\otimes d^*) +\alpha_6  (d^*\otimes d^*)\cdot \overline{D^*}.$$
Note that $\Delta d^*$ in (\ref{dd}) is equal to $\sum_{i=1}^2\frac{\partial^2 d^*}{\partial x_i^2}$ due to $d^*:(0,T)\times\mathbb{R}^2\mapsto \mathbb{S}^2$, and the corresponding initial data for $(v^*,d^*)$ can be taken as
\begin{equation}\label{EL-IB}
 v^*|_{t=0} = v_0^*\in \mathring{H},\quad d^*|_{t=0}=d_0^*, \quad d_0^*-d^\infty\in H^1(\mathbb{R}^2, \mathbb{R}^3),
\end{equation}
where $v_0^*$ and $ d_0^*$ are given in (\ref{QSQ}). Then, the energy inequality for the limiting Ericksen-Leslie system (\ref{dd}) corresponding to the initial data (\ref{EL-IB}) is
\begin{eqnarray}
   &&  \int_{\mathbb{R}^2} \left( \frac12 |v^*|^2 +\frac{k_1}{2}|\nabla d^*|^2 \right)(\cdot, t)dx +\int_0^t \int_{\mathbb{R}^2} \left[\alpha_4|\overline{D^*}|^2+ (\alpha_1 +\frac{\gamma_2^2}{\gamma_1})|\overline{D^*}:(d^*\otimes d^*)|^2 \right]dxdt\nonumber\\
   &&  +\int_0^t \int_{\mathbb{R}^2} \left[(\alpha_5+\alpha_6-\frac{\gamma_2^2}{\gamma_1})|\overline{D^*}\cdot d^*|^2 +\frac{1}{\gamma_1} |d^*\times h^*|^2\right]dxdt\nonumber\\
   &\leq &  \int_{\mathbb{R}^2} \left( \frac12 |v_0^*|^2 +\frac{k_1}{2}|\nabla d_0^*|^2 \right)dx,\label{d-energy}
\end{eqnarray}
where $t\in (0,T)$, see \cite[Proposition 2.1]{WW14} for the detailed derivation of (\ref{d-energy}).
\begin{remark}\label{R1}
Under conditions $(\ref{MCon2})$-$(\ref{MCon5})$, the system $(\ref{G-EL1})$-$(\ref{G-EL3})$ can be regarded as the uniaxial limit of the Beris-Edwards system $(\ref{Q1})$-$(\ref{Q3})$ by sending $\epsilon\rightarrow 0$. In dimension three, this has been shown rigorously by Wang-Zhang-Zhang \cite{WZZ15} before the first singular time of the Ericksen-Leslie system $(\ref{G-EL1})$-$(\ref{G-EL3})$. Our main goal in this paper is to show that such an asymptotic convergence holds true for weak solutions to the Beris-Edwards system $(\ref{Q1})$-$(\ref{Q3})$ and the Ericksen-Leslie system $(\ref{G-EL1})$-$(\ref{G-EL3})$ in the 2-dimensional case specified by $(\ref{addxin})$.
\end{remark}



\subsection{Main results}
We give first the definitions of weak solutions to the system (\ref{QQ}) and the limiting weak solutions to the system (\ref{dd}).

Weak solutions to (\ref{QQ}) subject to the initial data (\ref{Q-IB}) can be defined as:
\begin{definition}\label{def1}
For $0<T<\infty$, a pair $(v^\epsilon,Q^\epsilon)$ is a weak solution to the system $(\ref{QQ})$ subject to the initial data $(\ref{Q-IB})$, if $v^\epsilon\in L^\infty(0,T;\mathring{H}) \cap L^2(0,T;\mathring{J})$ and $Q^\epsilon\in L^\infty(0,T; H^1(\mathbb{R}^2,\mathcal{Q}_0))   \cap L^2(0,T;H^2(\mathbb{R}^2,\mathcal{Q}_0))$ satisfy the energy inequality $(\ref{Q-energy})$ and
\begin{eqnarray}
    &&\int_{\Omega_T} [-v^\epsilon\cdot \psi_t -(v^\epsilon\cdot \overline{\nabla\psi})\cdot v^\epsilon+\eta \overline{D^\epsilon}:\overline{\nabla\psi} -L_1 \nabla Q^\epsilon \odot \nabla Q^\epsilon:\underline{\nabla \psi}] dxdt \nonumber \\
    && +\int_{\Omega_T}[ Q^\epsilon \cdot H^\epsilon-H^\epsilon \cdot Q^\epsilon-S_{Q^\epsilon}(H^\epsilon)]:\overline{\nabla \psi} dxdt= \int_{\mathbb{R}^2}v_0^\epsilon(x)\cdot \psi(0,x) dx,\label{Qw1}
\end{eqnarray}
\begin{eqnarray}
   && \int_{\Omega_T} [-Q^\epsilon:\varphi_t -(\underline{v^\epsilon}\cdot \nabla  \varphi): Q^\epsilon -\frac{1}{\Gamma}H^\epsilon:  \varphi  ]dxdt\nonumber\\
&+&\int_{\Omega_T} [Q^\epsilon\cdot \overline{\Lambda^\epsilon}-\overline{\Lambda^\epsilon}\cdot Q^\epsilon-S_{Q^\epsilon}(\overline{D^\epsilon})]:\varphi dxdt= \int_{\mathbb{R}^2} Q_0^\epsilon(x):\varphi(0,x) dx\label{Qw2}
\end{eqnarray}
for every $\psi \in C_0^\infty([0,T)\times\mathbb{R}^2,\mathbb{R}^3),\partial_1\psi_1 +\partial_2 \psi_2=0, \varphi\in C_0^\infty([0,T)\times\mathbb{R}^2, \mathcal{Q}_0)$, where $H^\epsilon$ is given in $(\ref{H})$ and $\overline{D^\epsilon},\, \overline{\Lambda^\epsilon}, \,\underline{v^\epsilon}$ are given in $(\ref{X-1})$, and $S_{Q^\epsilon}(H^\epsilon)$, $S_{Q^\epsilon}(\overline{D^\epsilon})$ are given in $(\ref{SQ-HD})$.
\end{definition}

While weak solutions to (\ref{dd}) subject to the initial data (\ref{EL-IB}) are defined as:

\begin{definition}\label{ELDe}
For $0<T<\infty$, a pair $(v^*,d^*)$ is a weak solution to the system $(\ref{dd})$ subject to the initial data $(\ref{EL-IB})$, if $v^*\in L^\infty(0,T;\mathring{H})\cap L^2(0,T;\mathring{J})$ and $d^*-d^\infty \in L^\infty(0,T;H^1(\mathbb{R}^2,\mathbb{R}^3))$ satisfy the energy inequality $(\ref{d-energy})$ and
\begin{eqnarray}
  &&\int_{\Omega_T}[-v^*\cdot \psi_t-(v^*\cdot\overline{\nabla \psi})\cdot v^*+\alpha_4 \overline{D^*}: \overline{\nabla \psi}-2L_1s_+^2 \nabla d^*\odot \nabla d^*:\underline{\nabla \psi} ] dxdt \nonumber\\
  &&+\int_{\Omega_T}[\alpha_1(d^*\otimes d^* : \overline{D^*})d^*\otimes d^* +\alpha_2 \overline{N^*}\otimes d^* +\alpha_3 d^*\otimes \overline{N^*} + \alpha_5 \overline{D^*}\cdot (d^*\otimes d^*)]:\overline{\nabla \psi}dxdt\nonumber\\
  &&+\int_{\Omega_T}\alpha_6  (d^*\otimes d^*)\cdot \overline{D^*} :\overline{\nabla \psi} dxdt =\int_{\Omega}v_0^*(x)\cdot \psi(0,x)dx, \label{ELw1}
\end{eqnarray}
\begin{eqnarray}
  &&\int_{\Omega_T}\{\gamma_1[-d^*\cdot\zeta_t-(\underline{v^*}\cdot \nabla \zeta)\cdot d^*-(\overline{\Lambda^*}\cdot d^*)\cdot \zeta]+ \gamma_2(\overline{D^*}\cdot d^* -d^*\otimes d^*:\overline{D^*} d^*)\cdot \zeta\} dxdt \nonumber \\
   &&+2L_1s_+^2\int_{\Omega_T}(\partial_k d^*\cdot\partial_k \zeta -|\nabla d^*|^2d^*\cdot \zeta )dxdt  =\gamma_1\int_{\Omega}d_0^*(x)\cdot \zeta(0,x) dx\label{ELw2}
\end{eqnarray}
for every $\psi \in C_0^\infty([0,T)\times\mathbb{R}^2,\mathbb{R}^3),\partial_1\psi_1 +\partial_2\psi_2=0, \zeta\in C^\infty_0([0,T)\times \mathbb{R}^2,\mathbb{R}^3)$, where $\overline{D^*},\,\overline{\Lambda^*}, \,\underline{v^*}, \,\overline{N^*}$ are given in $(\ref{X-2})$, and $\gamma_1,\gamma_2, \alpha_1,\cdots, \alpha_6$ are given in $(\ref{MCon2})$-$(\ref{MCon5})$.
\end{definition}

Then, the main results in this paper can be stated as follows.
\begin{theorem}\label{MT}
Assume that $\xi$ is suitably small, the conditions $(\ref{QCon2})$, $(\ref{addxin})$ and $(\ref{QSQ})$ hold, and the parameters $a,b$, and $c$ satisfy
\begin{equation}\label{abc}
  b>0, \quad b^2+27ac>0.
\end{equation}
Let $(v^\epsilon, Q^\epsilon)$ be weak solutions to the system $(\ref{QQ})$ subject to the initial data $(\ref{Q-IB})$, and $v^\epsilon\in L^\infty_tH^1_x\cap L^2_tH^2_x, Q^\epsilon-Q^\infty \in L^\infty_t H^2_x \cap L^2_tH^3_x$. Then, there exists a convergent subsequence of $\{(v^\epsilon, Q^\epsilon)\}_{\epsilon>0}$, such that
$$v^\epsilon \rightharpoonup v^* \text{ in } L^2(0, T; \mathring{J}), \quad v^\epsilon \mathop{\rightharpoonup}\limits^{\star} v^* \text{ in }L^\infty(0,T;\mathring{H} ), \quad \nabla Q^\epsilon \mathop{\rightharpoonup}\limits^{\star} \nabla Q^* \text{ in } L^\infty(0, T; L^2(\mathbb{R}^2,\mathcal{Q}_0)),$$
as $\epsilon \rightarrow 0^+$. Furthermore, $Q^*$ has the form
$$Q^*=s_+(d^*\otimes d^*-\frac13 \mathbb{I}),\quad d^*\in \mathbb{S}^2,\quad s_+=\frac{b+\sqrt{b^2+24ac}}{4c},$$
and $(v^*,d^*)$ is a weak solution to the Ericksen-Leslie system $(\ref{dd})$ subject to the initial data $(\ref{EL-IB})$ with the coefficients satisfying $(\ref{MCon2})$-$(\ref{MCon5})$.

\end{theorem}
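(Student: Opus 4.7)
The strategy is classical: derive uniform $\epsilon$-independent bounds and compactness, identify the weak limit as a uniaxial $Q$-tensor, and pass to the limit in the weak formulations (\ref{Qw1})--(\ref{Qw2}), with the core difficulty being the singular piece $\mathcal{J}(Q^\epsilon)/\epsilon$ of $H^\epsilon$.

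\emph{Compactness and the uniaxial constraint.} The energy inequality (\ref{Q-energy}) combined with (\ref{E01}) provides, uniformly in $\epsilon$, the bounds $v^\epsilon\in L^\infty_t L^2_x\cap L^2_t H^1_x$, $\nabla Q^\epsilon\in L^\infty_t L^2_x$, $H^\epsilon\in L^2(\Omega_T)$, and $\epsilon^{-1}\hat{F}_b(Q^\epsilon)\in L^\infty_t L^1_x$. Together with the higher-regularity hypothesis (interpreted uniformly in $\epsilon$), this yields the stated weak and weak-$*$ convergences to $(v^*,Q^*)$. Fatou's lemma applied to the last bound forces $\hat{F}_b(Q^*)\equiv 0$ a.e.; since (\ref{abc}) guarantees $s_+>0$ and the non-degeneracy of the minimum of $F_b$, (\ref{uniaxial}) gives $Q^*(t,x)\in\mathcal{N}$ a.e., and a measurable selection of the principal eigenvector yields $d^*\colon\Omega_T\to\mathbb{S}^2$ with $Q^*=s_+(d^*\otimes d^*-\frac{1}{3}\mathbb{I})$. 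Using the evolution equations in (\ref{QQ}) to control $\partial_t v^\epsilon$ and $\partial_t Q^\epsilon$ in suitable negative Sobolev spaces, an Aubin--Lions argument upgrades the weak convergence to $v^\epsilon\to v^*$ strongly in $L^2_t L^2_{loc}$ and $Q^\epsilon\to Q^*$ strongly in $L^2_t H^1_{loc}$ --- the latter being indispensable for passing to the limit in the Ericksen tensor $\nabla Q^\epsilon\odot\nabla Q^\epsilon$ and in the quadratic tensor nonlinearities.

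\emph{The singular limit in the $H^\epsilon$-terms (main obstacle).} The regular contributions in (\ref{Qw1})--(\ref{Qw2}) pass to the limit routinely from the strong compactness above. The genuine difficulty is to extract the weak limits of $Q^\epsilon\cdot H^\epsilon-H^\epsilon\cdot Q^\epsilon$, $S_{Q^\epsilon}(H^\epsilon)$ and $\Gamma^{-1}H^\epsilon$: although $H^\epsilon$ is bounded in $L^2$, its decomposition $H^\epsilon=L_1\Delta Q^\epsilon-\mathcal{J}(Q^\epsilon)/\epsilon$ splits into two individually unbounded pieces as $\epsilon\to 0^+$. Following the Hilbert-expansion strategy of Wang--Zhang--Zhang \cite{WZZ15}, write $Q^\epsilon=Q^*+\epsilon Q_1^\epsilon+r^\epsilon$ with $Q_1^\epsilon$ in the normal space to $\mathcal{N}$ at $Q^*$; since $\mathcal{J}$ vanishes identically on $\mathcal{N}$, Taylor expansion of (\ref{J}) yields $\mathcal{J}(Q^\epsilon)/\epsilon=\mathcal{H}_{Q^*}Q_1^\epsilon+o(1)$ in $L^2$, where $\mathcal{H}_{Q^*}$ is the Hessian of $\hat{F}_b$ at the uniaxial state $Q^*$. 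The decisive algebraic identity, valid precisely because $Q^*$ is uniaxial, is that the commutator $[Q^*,\mathcal{H}_{Q^*}X]$ and the bracket $S_{Q^*}(\mathcal{H}_{Q^*}X)$ depend only on the tangential component of $X$ along $T_{Q^*}\mathcal{N}$; the normal (biaxial) component is annihilated. The singular contributions to the stress thus collapse, in the limit, to explicit tensorial expressions in $d^*$ and $\nabla v^*$, closing the limit system.

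\emph{Coefficient matching and energy inequality.} Substituting the uniaxial ansatz $Q^*=s_+(d^*\otimes d^*-\frac{1}{3}\mathbb{I})$ into the cancelled limit and matching tensorial coefficients with the Ericksen--Leslie stress in (\ref{dd}) produces exactly the Leslie coefficients (\ref{MCon2})--(\ref{MCon5}); projecting the $Q$-equation onto $T_{Q^*}\mathcal{N}$ yields the director equation in (\ref{dd}). The energy inequality (\ref{d-energy}) follows from the weak lower-semicontinuity of the convex quadratic dissipation functionals in (\ref{Q-energy}), together with the strong convergence of the initial energies provided by (\ref{QSQ}) via the pointwise identity $|\nabla Q^*|^2=2s_+^2|\nabla d^*|^2$. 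The smallness hypothesis on $\xi$ is invoked throughout to ensure $S_Q$ is controlled by the dissipation $\eta|\overline{D^\epsilon}|^2+\Gamma^{-1}|H^\epsilon|^2$, which both validates the Hilbert expansion and produces the coerciveness conditions (\ref{Les3}) underlying (\ref{d-energy}).
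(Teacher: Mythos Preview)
Your argument has a genuine gap at its two load-bearing steps.

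\textbf{Strong $H^1$ convergence of $Q^\epsilon$ is not free.} You invoke Aubin--Lions ``together with the higher-regularity hypothesis (interpreted uniformly in $\epsilon$)'' to obtain $Q^\epsilon\to Q^*$ strongly in $L^2_tH^1_{loc}$, and then use this to pass to the limit in $\nabla Q^\epsilon\odot\nabla Q^\epsilon$. But the hypothesis $Q^\epsilon-Q^\infty\in L^\infty_tH^2_x\cap L^2_tH^3_x$ in the theorem is a qualitative membership for each fixed $\epsilon$, not a uniform bound; the only $\epsilon$-independent estimates available are those from the energy inequality (\ref{Q-energy}), which give $\nabla Q^\epsilon\in L^\infty_tL^2_x$ and $H^\epsilon\in L^2(\Omega_T)$ but \emph{not} $\Delta Q^\epsilon\in L^2$ uniformly (since $H^\epsilon=L_1\Delta Q^\epsilon-\epsilon^{-1}\mathcal{J}(Q^\epsilon)$ and both pieces may blow up). Aubin--Lions with these bounds yields only $Q^\epsilon\to Q^*$ strongly in $L^q_tL^q_{loc}$, so the quadratic gradient term $\nabla Q^\epsilon\odot\nabla Q^\epsilon$ may concentrate. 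The paper's central technical contribution is precisely to rule this out: Lemma~\ref{Lm} shows that \emph{under a local small-energy condition} one obtains a uniform local $H^2$ bound on $Q^\epsilon$ (via a delicate eigenvalue computation showing the key cross term $\epsilon^{-1}\partial_kQ^\epsilon:\partial_k\mathcal{J}(Q^\epsilon)$ has a sign, plus a Pohozaev-type boundary estimate), hence local strong $H^1$ convergence; this confines possible concentration of $\nabla Q^\epsilon\odot\nabla Q^\epsilon$ to finitely many points at almost every time, and a second Pohozaev argument then excludes concentration at those isolated points when tested against divergence-free $\psi$.

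\textbf{The Hilbert expansion does not close for weak limits.} Your treatment of the singular terms writes $Q^\epsilon=Q^*+\epsilon Q_1^\epsilon+r^\epsilon$ and Taylor-expands $\mathcal{J}$ about $Q^*$. This is exactly the strategy of \cite{WZZ15}, and the paper explicitly notes that it ``depends crucially on the high order differentiability of the limiting solutions'' and cannot be applied here: without a priori smoothness of $Q^*$ (which is the whole point, since the limiting Ericksen--Leslie weak solution may have singularities), you have no control on $Q_1^\epsilon$ or on the remainder, and the claimed $o(1)$ in $L^2$ is unjustified. The paper instead passes to the weak limit $H^\epsilon\rightharpoonup H^*$ directly in $L^2$, and the crucial structural fact---also produced by Lemma~\ref{Lm}---is that the defect $\mathcal{J}^*:=L_1\Delta Q^*-H^*$ lies in $(T_{Q^*}\mathcal{N})^\perp_{\mathcal{Q}_0}$ a.e.\ away from the finite concentration set. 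Testing (\ref{Qstar2}) with tangential tensors $\varphi\in T_{Q^*}\mathcal{N}$ then eliminates $\mathcal{J}^*$ and yields the director equation; testing with suitable $Q^*$-dependent $\varphi$ expresses the stress contributions $Q^*\cdot H^*-H^*\cdot Q^*$ and $S_{Q^*}(H^*)$ purely in terms of $d^*,\overline{D^*},\overline{N^*}$, producing the Leslie coefficients (\ref{MCon2})--(\ref{MCon5}).
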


\begin{remark}
In the proof of Theorem \ref{MT}, we need only the conditions $b>0$ and $b^2+24ac>0$. However, to make sure that $\min_{Q\in \mathcal{Q}_0}F_b(Q)$ is achieved at $Q=s_+(d\otimes d-\frac13 \mathbb{I}), d\in \mathbb{S}^2$, one needs the second condition in (\ref{abc}). Indeed, as shown in \cite{MN14,M10}, if $F_b(Q_m)=\min_{Q\in \mathcal{Q}_0}F_b(Q)$, then $Q_m$ must be uniaxial, i.e.  $Q_m=s(d\otimes d-\frac13 \mathbb{I}), d\in \mathbb{S}^2$. Hence, $F_b(Q_m)$ can be rewritten as
$$F_b(Q_m)=-\frac{a}{3}s^2-\frac{2b}{27}s^3+\frac{4c}{9}s^4:=f_b(s),$$
whose critical points are
$$s_0=0,\quad s_+=\frac{b+\sqrt{b^2+24ac}}{4c},\quad s_{-}=\frac{b-\sqrt{b^2+24ac}}{4c}.$$
Therefore, $\min_{Q\in \mathcal{Q}_0}F_b(Q)=\min\{ f_b(s_0),f_b(s_{+}),f_b(s_-)\}$. Note that $f_b(s_{\pm})=\frac{s_{\pm}^2}{54}(-9a -bs_{\pm})$ or $f_b(s_{\pm})=\frac{s_{\pm}^3}{9}(\frac{b}{3}-cs_{\pm})$ due to $-3a -bs_{\pm}+2cs_{\pm}^2=0$. Then,
\begin{itemize}
  \item if $b>0$ and $a>0$, one has $f_b(s_{+}) < f_b(s_{-}) <f_b(0)$,
  \item if $b>0$ and $0\geq a> -\frac{b^2}{27c}$, one has $f_b(s_{+})  <f_b(0)\leq f_b(s_{-})$,
  \item if $b>0$ and $ -\frac{b^2}{27c} \geq a \geq -\frac{b^2}{24c}$, one has $f_b(s_+)\geq f_b(0), f_b(s_{-})\geq f_b(0)$.
\end{itemize}
These mean
$$\min_{Q\in\mathcal{Q}_0}F_b(Q) =f_b(s_+), \text{ when } b >0 \text{ and } b^2+27ac>0. $$
\end{remark}

\begin{remark}
It follows from direct calculations that
$$\alpha_5+\alpha_6-\frac{\gamma_2^2}{\gamma_1}=-\frac{8\Gamma\xi^2(1-s_+)^2}{9}\leq 0$$
due to $(\ref{MCon2})$-$(\ref{MCon5})$. This, $(\ref{Les3})$ and $(\ref{Les4})$ imply that only some special cases of Ericksen-Leslie systems can be derived from the Beris-Edwards system. The weak solutions obtained in \cite{LLW10,H11,HX12,HLW12,WW14} have at most finite number singular times and are smooth away from the singular times, furthermore, the values of the singular times can be uniquely redefined by the weak-$L^2$ limit through the energy inequalities. Therefore, the uniqueness of the weak solution between the nearest two singular times implies the uniqueness of the global weak solutions, and this kind of results are proved in \cite{LW10,LTX16}. In this paper, different from \cite{LLW10,H11,HX12,HLW12,WW14} where the global weak solution is defined by extending the local strong solution to the Ericksen-Leslie system, we obtain a global-in-time solution to the Ericksen-Leslie system as a limit of the global-in-time solutions to the Beris-Edwards system, and the regularity and uniqueness of such solution are not clear.
\end{remark}


We now make some comments on the main ideas of the proof of Theorem \ref{MT}. As mentioned in Remark \ref{R1}, the asymptotic convergence of solutions to the Beris-Edwards system (\ref{Q1})-(\ref{Q3}) to the regular solutions to the Ericksen-Leslie system (\ref{G-EL1})-(\ref{G-EL3}) with the the coefficients satisfying (\ref{MCon2})-(\ref{MCon5}) has been proved in \cite{WZZ15}. However, the analysis in \cite{WZZ15} is based on the Hilbert expansion, which depends crucially on the high order differentiability of the limiting solutions to the Ericksen-Leslie system and thus cannot be applied to the case that the solutions to the Ericksen-Leslie system have singularities whose existence had been confirmed in  \cite{HLLW16, LLWWZ19}. Here we will establish the asymptotic convergence of these two systems as $\epsilon \rightarrow 0^+$ for weak solutions by analysing the a priori energy inequality (\ref{Q-energy}) for $(v^\epsilon,Q^\epsilon)$ and showing that the weak-limit $(v^*,Q^*)$ of $(v^\epsilon,Q^\epsilon)$ solves the Ericksen-Leslie system (\ref{dd}) subject to the initial data (\ref{EL-IB}) and satisfies the energy inequalities (\ref{d-energy}). This approach is strongly motivated by studies in \cite{LW16,K2020,DHW20} where the weak solutions to the simplified Ericksen-Leslie system are obtained as weak limits of solutions to the Ginzburg-Landau approximation system by weak convergence methods. Here we outline some major elements of the proof of Theorem \ref{MT}. First, the existence of the weak *-limit, $(v^*,Q^*)$, of the $(v^\epsilon,Q^\epsilon)$ is guaranteed by the basic energy inequality (\ref{Q-energy}) and Aubin-Lious Lemma by a standard argument. The key step of the analysis is passing this weak limit into nonlinear terms in the systems. Due to the super-critical nonlinear term $\sum_{l=1}^2\partial_l  (\nabla d\odot \nabla d)_{il}, i=1,2$ in the Ericksen-Leslie system (see (\ref{dd})), it turns out that the most difficult part of the proof of Theorem \ref{MT} is to show that there exists a subsequence of $\{Q^\epsilon\}_{\epsilon>0}$ such that
\begin{equation}\label{I-1}
  \int_0^T\int_{\mathbb{R}^2} \nabla Q^\epsilon (t,x)\odot \nabla Q^\epsilon(t,x):\underline{\nabla \psi}(t,x)dxdt \rightarrow \int_0^T\int_{\mathbb{R}^2} \nabla Q^* (t,x)\odot \nabla Q^*(t,x):\underline{\nabla \psi}(t,x)dxdt
\end{equation}
as $\epsilon\rightarrow 0^+$, for each $\psi \in C_0^\infty([0,T)\times\mathbb{R}^2,\mathbb{R}^3)$, $\partial_1\psi_1 +\partial_2 \psi_2=0$. As in \cite{LW16,K2020,DHW20}, one can define a good time $t$ such that
\begin{equation}\label{I-2}
  \liminf_{\epsilon \rightarrow 0^+}\int_{\mathbb{R}^2} |H^\epsilon|^2(t,x) dx <\infty.
\end{equation}
By the energy inequality (\ref{Q-energy}), (\ref{I-1}) is satisfied as long as
\begin{equation}\label{keyqq}
  \int_{\mathbb{R}^2} \nabla Q^\epsilon (t,x)\odot \nabla Q^\epsilon(t,x):\underline{\nabla \psi}(t,x)dx \rightarrow \int_{\mathbb{R}^2} \nabla Q^* (t,x)\odot \nabla Q^*(t,x):\underline{\nabla \psi}(t,x)dx
\end{equation}
as $\epsilon\rightarrow 0^+$, for each good time and each $\psi \in C_0^\infty([0,T)\times\mathbb{R}^2,\mathbb{R}^3), \partial_1\psi_1 +\partial_2 \psi_2=0$. To prove (\ref{keyqq}), we can establish the following important claim:
\begin{claim}[the strong convergence under samll energy condition]\label{keyclaim}
At a good time, the local strong $H^1$ convergence of $Q^\epsilon$ can be obtained if the local total energy is suitably small (see Lemma $\ref{Lm}$).
\end{claim}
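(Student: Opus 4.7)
The plan is to establish an $\epsilon$-regularity statement in the spirit of Schoen--Uhlenbeck, Lin--Wang, Kortum \cite{K2020}, and Du--Huang--Wang \cite{DHW20}: at a good time $t_0$, along a subsequence for which $\|H^\epsilon(t_0,\cdot)\|_{L^2(\mathbb{R}^2)}\le M<\infty$, I want to upgrade the weak $H^1$ convergence $Q^\epsilon(t_0)\rightharpoonup Q^*(t_0)$ to strong $H^1$ convergence on any ball $B_R(x_0)$ for which $\int_{B_{2R}(x_0)}(L_1|\nabla Q^\epsilon|^2+\hat F_b(Q^\epsilon)/\epsilon)\,dx<\epsilon_0$, with $\epsilon_0>0$ a universal threshold to be determined. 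The Rellich lemma plus the bulk-energy bound $\int\hat F_b(Q^\epsilon)/\epsilon\,dx\le E_0$ and (\ref{uniaxial}) already force $Q^*(t_0,x)\in\mathcal N$ a.e., so the real content is to show that no Dirichlet energy is lost to defect measures in the weak limit.

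The central tool is the identity $L_1\Delta Q^\epsilon=H^\epsilon+\mathcal J(Q^\epsilon)/\epsilon$. I would test it against $(Q^\epsilon-Q^*)\varphi^2$, where $\varphi\in C_0^\infty(B_{2R})$ satisfies $\varphi\equiv 1$ on $B_R$, and integrate by parts to arrive at
\begin{equation*}
L_1\int|\nabla Q^\epsilon|^2\varphi^2\,dx + \int\frac{\mathcal J(Q^\epsilon)}{\epsilon}:(Q^\epsilon-Q^*)\varphi^2\,dx
= L_1\int\nabla Q^\epsilon:\nabla Q^*\,\varphi^2\,dx + \mathcal R^\epsilon,
\end{equation*}
where $\mathcal R^\epsilon$ collects the cutoff errors (involving $\nabla\varphi$) and the pairing with $H^\epsilon$. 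Since $H^\epsilon$ is uniformly bounded in $L^2$ while $Q^\epsilon-Q^*\to 0$ strongly in $L^2(B_{2R})$ by Rellich, and since $\nabla\varphi$ is supported in an annulus where the local energy is small, one gets $\mathcal R^\epsilon=o(1)$. Passing weakly to the limit in the right-hand side produces $L_1\int|\nabla Q^*|^2\varphi^2\,dx$; combined with lower semicontinuity of the $L^2$-norm, this would close the strong convergence on $B_R$ provided the penalty term can be shown to be asymptotically non-negative.

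The main obstacle is the penalty term $\int\mathcal J(Q^\epsilon):(Q^\epsilon-Q^*)\varphi^2/\epsilon\,dx$. Because $\mathcal N$ is two-dimensional (parametrised by $d\in\mathbb S^2$) inside the five-dimensional $\mathcal Q_0$, the Hessian of $\hat F_b$ at any point of $\mathcal N$ is strictly positive in the three transverse directions (this is where $b^2+24ac>0$ and $c>0$ come in) but vanishes tangentially. I would orthogonally decompose $Q^\epsilon-Q^* = (Q^\epsilon-Q^*)^\perp + (Q^\epsilon-Q^*)^\parallel$ relative to the tangent space of $\mathcal N$ at $Q^*$. The normal component generates a coercive contribution of order $c_0|(Q^\epsilon-Q^*)^\perp|^2/\epsilon$ which is absorbed to the left; the tangential part is handled via the pointwise estimate $|\mathcal J(Q^\epsilon)|\le C\,\mathrm{dist}(Q^\epsilon,\mathcal N)$ near $\mathcal N$ and Cauchy--Schwarz, yielding a bound by $\sqrt{\hat F_b(Q^\epsilon)/\epsilon}\,\|Q^\epsilon-Q^*\|_{L^2(B_{2R})}/\sqrt{\epsilon}$, which, using the bulk-energy bound together with strong $L^2$ convergence, is $o(1)$.

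For these decompositions to be globally well-defined on $B_R$, $Q^\epsilon$ must stay inside a fixed tubular neighbourhood of $\mathcal N$; this is exactly where the small-energy hypothesis is used. A small-energy iteration, either Moser-type applied to the Bochner identity for $|\nabla Q^\epsilon|^2$ using $\|H^\epsilon\|_{L^2}\le M$, or a two-dimensional Gagliardo--Nirenberg/Brezis--Gallou\"et bootstrap, yields a uniform $L^\infty$-bound on $\mathrm{dist}(Q^\epsilon,\mathcal N)$ on $B_R$ provided $\epsilon_0$ is small enough. Once this confinement is in place, the two estimates above close and give $\limsup_\epsilon L_1\int_{B_R}|\nabla Q^\epsilon|^2\,dx\le L_1\int_{B_R}|\nabla Q^*|^2\,dx$, whence strong $H^1$ convergence on $B_R$. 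The two steps I expect to be genuinely delicate are the coercivity estimate for $\mathcal J$ across the stratification $\parallel\!/\!\perp$ and the confinement into a tubular neighbourhood of $\mathcal N$, both of which critically require the smallness of the local energy.
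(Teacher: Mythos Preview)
Your strategy---test the equation against $(Q^\epsilon-Q^*)\varphi^2$ and show the penalty term is asymptotically non-negative---has a genuine gap at the tangential part. After Cauchy--Schwarz and $|\mathcal J(Q^\epsilon)|\le C\sqrt{\hat F_b(Q^\epsilon)}$, your bound reads
\[
\int_{B_{2R}}\frac{|\mathcal J(Q^\epsilon)|}{\epsilon}\,|(Q^\epsilon-Q^*)^\parallel|\,\varphi^2
\;\le\; C\Bigl(\int_{B_{2R}}\frac{\hat F_b(Q^\epsilon)}{\epsilon}\Bigr)^{1/2}\cdot\frac{\|(Q^\epsilon-Q^*)^\parallel\|_{L^2}}{\sqrt{\epsilon}}.
\]
For this to be $o(1)$ you need $\|(Q^\epsilon-Q^*)^\parallel\|_{L^2}=o(\sqrt\epsilon)$, but Rellich gives only $o(1)$ with no rate. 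The tangential displacement is essentially the phase difference $d^\epsilon-d^*$, and there is no a~priori reason it should decay like $\sqrt\epsilon$. Decomposing instead with respect to $T_{\mathcal P_{\mathcal N}(Q^\epsilon)}\mathcal N$ does not help: the curvature of $\mathcal N$ produces a normal component of $\mathcal P_{\mathcal N}(Q^\epsilon)-Q^*$ of size $O(|Q^\epsilon-Q^*|^2)$, and the resulting integrand $\tfrac{1}{\epsilon}\,\mathrm{dist}(Q^\epsilon,\mathcal N)\,|Q^\epsilon-Q^*|^2$ again requires $\|Q^\epsilon-Q^*\|_{L^4}^2=o(\sqrt\epsilon)$. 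This is precisely the obstruction the paper flags in the introduction (see \eqref{I-6}--\eqref{I-7}): unlike the Ginzburg--Landau nonlinearity, $\mathcal J(Q)$ does \emph{not} vanish on $T_{\mathcal P_{\mathcal N}(Q)}\mathcal N$ in the sense needed, so the Kortum/Du--Huang--Wang phase-equation argument leaves a residual $\epsilon^{-1}$ term that cannot be closed with Rellich alone.

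The paper bypasses this by proving instead a uniform $H^2$ bound $\|\Delta Q^\epsilon\|_{L^2(B_{r_0})}\le C$, from which strong $H^1$ convergence follows by compactness. The route is: square the equation and integrate, so that the dangerous cross term becomes $\tfrac{2L_1}{\epsilon}\int\partial_k Q^\epsilon:\partial_k\mathcal J(Q^\epsilon)$; after a delicate algebraic decomposition based on the eigenvalues of $Q^\epsilon$ (see \eqref{j1j2}--\eqref{Key-L1}), the leading part $J_1$ is shown to be pointwise \emph{non-negative} when $b>0$, $b^2+24ac>0$, and $Q^\epsilon$ lies in a tubular neighbourhood of $\mathcal N$. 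The remainder $J_2$ is controlled by $|\mathcal J(Q^\epsilon)|\,|\nabla Q^\epsilon|^2$ and absorbed via Ladyzhenskaya under the small-energy assumption. A Pohozaev-type identity (multiply by $\tfrac{x-x_0}{|x-x_0|}\cdot\nabla Q^\epsilon$ on an annulus) is used separately to control the boundary term $\tfrac{1}{\epsilon}\int_{\partial B}\partial_\nu\hat F_b(Q^\epsilon)$, which also carries an $\epsilon^{-1}$ and does not have an analogue in your scheme. Finally, your proposed confinement step via Moser iteration on the Bochner identity would face the same $\epsilon^{-1}$ issue (the term $\nabla Q^\epsilon:\nabla(\mathcal J(Q^\epsilon)/\epsilon)$ reappears); the paper obtains confinement (Step~1 of Lemma~\ref{Lm}) by an $\sqrt\epsilon$-rescaling and contradiction argument instead.
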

The proof of this claim is the crucial step in the proof of Theorem \ref{MT} and the most technical part in this paper. Once the Claim \ref{keyclaim} is established, we can prove easily the convergence (\ref{keyqq}) by modifying the analysis in \cite{K2020,DHW20}. Indeed, the Claim \ref{keyclaim} implies that $\nabla Q^\epsilon(t,\cdot) \odot \nabla Q^\epsilon(t,\cdot)$ may concentrate on only at a finite number of points at good times. Based on this fact and
\begin{equation*}\label{I-3}
\nabla Q^\epsilon \odot \nabla Q^\epsilon:\underline{\nabla \psi}=\left(
                                                                                        \begin{array}{cc}
                                                                                          \frac12(|\partial_1 Q^\epsilon|^2-|\partial _2 Q^\epsilon|^2) & \partial_1 Q^\epsilon:\partial_2 Q^\epsilon \\
                                                                                          \partial_1 Q^\epsilon:\partial_2 Q^\epsilon & -\frac12(|\partial_1 Q^\epsilon|^2-|\partial _2 Q^\epsilon|^2) \\
                                                                                        \end{array}
                                                                                      \right):\underline{\nabla \psi},
\end{equation*}
one can rule out the potential isolated concentrate points of $\nabla Q^\epsilon(t,\cdot) \odot \nabla Q^\epsilon(t,\cdot)$ by studying the convergence of $ |\partial_1 Q^\epsilon|^2-|\partial _2 Q^\epsilon|^2$ and $\partial_1 Q^\epsilon:\partial_2 Q^\epsilon$ through a Pohozaev type argument as in \cite{DHW20} where the method was used to study the compensated compactness property of solutions to the Ginzburg-Landau approximate equations of the simplified Ericksen-Leslie equations for both uniaxial and biaxial nematics. It should be noted that the convergence (\ref{keyqq}) can also be proved by combining the Claim \ref{keyclaim} here with the concentration-cancellation method in \cite{K2020} which was developed by DiPerna and Majda \cite{DM88} for the incompressible  Euler equations. Note also that the method in \cite{LW16} to rule out the potential concentrate points of $\nabla d^\epsilon\odot \nabla d^\epsilon$ for the Ginzburg-Landau approximate solutions cannot be used here since it depends crucially on $d^\epsilon\in \mathbb{S}^2_+$ which implies $d^*\in\mathbb{S}^2_+$ and that the Liouville theorem of harmonic maps holds.

We now make some comments on the proof of the Claim \ref{keyclaim} above (for more details, see the proof of Lemma \ref{Lm}). Note first that though the corresponding results on the strong convergence under small energy conditions have been proved for the Ginzburg-Landau approximate solutions in \cite{K2020,DHW20},  yet the analysis in \cite{K2020,DHW20} depends crucially on the geometric structure of the Ginzburg-Landau approximation, such as in the case of (\ref{A0}) for uniaxial nematics \cite{K2020}, it holds that
\begin{equation}\label{I-4}
  f(d^\epsilon)-\left( f(d^\epsilon)\cdot \frac{d^\epsilon}{|d^\epsilon|}\right)\cdot \frac{d^\epsilon}{|d^\epsilon|}=0
\end{equation}
with $f(d^\epsilon)=(|d^\epsilon|^2-1)d^\epsilon$. Indeed, one of the key observations in \cite{K2020} is that (\ref{I-4}) implies that the phase function $\psi^\epsilon=\frac{d^\epsilon}{|d^\epsilon|}$ satisfies the following quasi-linear elliptic equation
\begin{equation}\label{I-5}
    \Delta \psi^\epsilon =-|\nabla \psi^\epsilon|^2 \psi^\epsilon -\frac{2}{|d^\epsilon|}\partial_k \psi^\epsilon \partial_k |d^\epsilon | +\frac{1}{|d^\epsilon|}(\tau^\epsilon-(\tau^\epsilon\cdot \psi^\epsilon)\psi^\epsilon)
\end{equation}
with $\tau^\epsilon=d^\epsilon_t +v^\epsilon \cdot \nabla d^\epsilon$. Note that the right hand side of (\ref{I-5}) contains no terms of order $\epsilon^{-1}$, so by the classic theory of elliptic equations, one can obtain the $\epsilon$-independent uniform bound of $||\nabla^2 \psi^\epsilon||_{L^\frac43}$ provided that $||\nabla d^\epsilon||_{L^2}$ is suitably small. This implies the local strong $H^1$ convergence of $d^\epsilon$. For the case of the Ginzburg-Landau approximate equations of the Ericksen-Leslie equations for both uniaxial and biaxial nematics, similar quasi-linear elliptic equations as (\ref{I-5}) were obtained in \cite{DHW20} by using the geometric structure as (\ref{I-4}), see the equation (2.9) in \cite{DHW20}, which yields the local strong $H^1$ convergence under small energy conditions. Unfortunately, this elegant argument cannot be applied easily to the solutions $(v^\epsilon, Q^\epsilon)$ to the Beris-Edwards system due to the structure of the bulk energy density. To see this, one sets
$$Q^\epsilon=e^\epsilon \phi^\epsilon, \quad e^\epsilon =|Q^\epsilon|, \quad \phi^\epsilon=\frac{Q^\epsilon}{|Q^\epsilon|}.$$
It then follows from (\ref{H}) that
\begin{equation}\label{I-6}
  \Delta \phi^\epsilon = -|\nabla \phi^\epsilon|^2\phi^\epsilon -\frac{2}{e^\epsilon}\partial_k e^\epsilon \partial_k \phi^\epsilon +\frac{1}{L_1e^\epsilon }[H^\epsilon -(H^\epsilon:\phi^\epsilon) \phi^\epsilon] +\frac{1}{\epsilon L_1 e^\epsilon}\left[\mathcal{J}(Q^\epsilon)-(\mathcal{J}(Q^\epsilon) :\phi^\epsilon) \phi^\epsilon\right].
\end{equation}
However, it can be checked that
\begin{equation}\label{I-7}
  \mathcal{J}(Q^\epsilon)-(\mathcal{J}(Q^\epsilon) :\phi^\epsilon) \phi^\epsilon\neq 0,
\end{equation}
and so the right hand side of (\ref{I-6}) contains a term of order $\epsilon^{-1}$, which makes it difficult to use the approach in \cite{K2020,DHW20} to obtain the local uniform estimate of $||\nabla^2 \phi^\epsilon||_{L^\frac43}$ even for small energy. Thus new ideas and techniques are needed to establish the strong convergence under small energy conditions for solutions to the Beris-Edwards system. We will prove this by making use of both the geometric structure of $Q^\epsilon$ and the bulk energy density for the Beris-Edwards system and some algebraic properties of $Q^\epsilon$ (see \textbf{Step 3} in the proof of Lemma \ref{Lm}). The main steps and ideas are sketched as follows.
\begin{itemize}
  \item \textbf{Step 1 ($L^\infty$ estimate)} The aim is to show that there exists a suitably small constant $\delta_0>0$ with the corresponding $r_0$ such that if
\begin{equation}\label{small-1}
  \int_{B_{4r_0}(x_0)}\left(|\nabla Q^\epsilon|^2+\frac{\hat{F}_b(Q^\epsilon)}{\epsilon}\right)dx<\delta_0,
\end{equation}
then,
\begin{equation}\label{small-2}
  \text{dist}(Q^\epsilon(x), \mathcal{N})<\delta_0^{\frac18}, x\in B_{3r_0}(x_0).
\end{equation}
Since there is no maximum principle for the system (\ref{Q3}) due to $\xi\ne 0$, it is difficult to get the uniform bound for $||Q^\epsilon||_{L^\infty}$. To overcome this difficulty, we decompose $B_{3r_0}(x_0)$ as the disjoint union of $\mathcal{C}_1^\epsilon$ and $\mathcal{C}_2^\epsilon$ defined as:
$$\mathcal{C}_1^\epsilon=\{x\in B_{3r_0}(x_0): \text{dist}(Q^\epsilon(x), \mathcal{N})<\delta_0^{\frac{1}{10}} \}, \quad \mathcal{C}_2^\epsilon= B_{3r_0(x_0)} \setminus \mathcal{C}_1^\epsilon.$$
Let $\delta^*>0$ be the geometric constant depending on the bulk energy density to be given in Lemma \ref{le-eq}, and choose $\delta_0>0$ so that $\delta_0^{\frac{1}{10}}<\frac14 \delta^*$. It then follows from the continuity of $Q^\epsilon$ that for each $y\in \mathcal{C}_1^\epsilon$, there exists $r^\epsilon_y>0$ such that $ \text{dist}(Q^\epsilon(x),\mathcal{N})<\delta^*$ for $x\in B_{r^\epsilon_y}(y)$, which implies $|\mathcal{J}(Q^\epsilon)|^2\leq C \hat{F}_b(Q^\epsilon)$ on a neighbourhood of $\mathcal{C}_1^\epsilon$ by Lemma \ref{le-eq}. Using this, (\ref{H}), and (\ref{small-1}), one can get by a proper scaling and elliptic estimates that
$$  |Q^\epsilon(x)-Q^\epsilon(y)| \leq C_1 \left[\left(\frac{r_{\bar{y}}^\epsilon}{\sqrt{\epsilon}}\right)^2+1\right]\left( \frac{|x-y|}{\sqrt{\epsilon}}\right)^\frac12 \text{ for } x,y\in B_{r^\epsilon_{\bar{y}}}(\bar{y})$$
with $\bar{y} \in \mathcal{C}_1^\epsilon$, furthermore, $r^\epsilon_{\bar{y}} \geq C_2(\delta^*)^2 \sqrt{\epsilon}$, where $C_2$ is independent of $\epsilon$. This implies that $\hat{F}_b(Q^\epsilon)$ cannot decay too fast on a neighbourhood of $\mathcal{C}_1^\epsilon$. Then choosing $r_{\bar{y}}^\epsilon=C_2(\delta^*)^2 \sqrt{\epsilon}$, one can show by contradiction that $\text{dist}(Q^\epsilon(x), \mathcal{N})<\delta_0^{\frac18}$ for $x\in \mathcal{C}_1^\epsilon$ and suitably small $\delta_0$. This yields that $\mathcal{C}_2^\epsilon=\emptyset$ so (\ref{small-2}) holds.
  \item \textbf{Step 2} We show that there exists $r_3^\epsilon\in(r_0,3r_0)$ such that
\begin{equation}\label{hh1}
  \left|\int_{\partial B_{r_3^\epsilon}(x_0)} \frac{1}{\epsilon} \frac{\partial\hat{F}_b(Q^\epsilon)}{\partial \nu}dS\right|\leq C_0,
\end{equation}
where $C_0$ is independent of $\epsilon$ and $\nu$ is the radial direction. Note that
\begin{equation}\label{small-3}
  \frac{1}{\epsilon} \frac{\partial\hat{F}_b(Q^\epsilon)}{\partial \nu}=\frac{\mathcal{J}(Q^\epsilon)}{\epsilon}:(\frac{x-x_0}{|x-x_0|} \cdot \nabla Q^\epsilon),
\end{equation}
\begin{eqnarray}
  &&\Delta Q^\epsilon :(\frac{x-x_0}{|x-x_0|} \cdot \nabla Q^\epsilon) =  \partial_k \left[\partial_k Q^\epsilon :(\frac{x-x_0}{|x-x_0|} \cdot \nabla Q^\epsilon) -\frac12 |\nabla Q^\epsilon |^2 \frac{(x-x_0)_k}{|x-x_0|}\right]\nonumber\\
   && \qquad-Q^\epsilon_{ij,k}Q^\epsilon_{ij,l} \partial_k\left(\frac{(x-x_0)_l}{|x-x_0|}\right)+\frac12 |\nabla Q^\epsilon|^2 \nabla \cdot\left(\frac{x-x_0}{|x-x_0|}\right). \label{small-4}
\end{eqnarray}
It follows from (\ref{small-1}) that there exist $r_1^\epsilon\in (r_0, \frac32 r_0)$ and $r_2^\epsilon\in (2r_0,3r_0)$ such that
\begin{equation}\label{small-5}
||\nabla Q^\epsilon||_{L^2(\partial B_{r_1^\epsilon}(x_0))}^2<\frac{8\delta_0}{r_0},\quad ||\nabla Q^\epsilon||_{L^2(\partial B_{r_2^\epsilon}(x_0))}^2< \frac{8\delta_0}{r_0}.
\end{equation}
Due to (\ref{H}), it holds that
\begin{equation}\label{small-6}
  \int_{B_{r_2^\epsilon}(x_0)\setminus B_{r_1^\epsilon}(x_0)}(L_1\Delta Q^\epsilon -\frac{\mathcal{J}(Q^\epsilon)}{\epsilon}-H^\epsilon):(\frac{x-x_0}{|x-x_0|} \cdot \nabla Q^\epsilon)dx=0.
\end{equation}
Using (\ref{small-3})-(\ref{small-5}), (\ref{I-2}) and (\ref{small-1}), one can derive from (\ref{small-6}) that
$$\left|\int_{B_{r_2^\epsilon}(x_0)\setminus B_{r_1^\epsilon}(x_0)} \frac{x-x_0}{|x-x_0|}\cdot\nabla \frac{\hat{F}_b(Q^\epsilon)}{\epsilon}dx\right|\leq C_0,$$
which yields the desired (\ref{hh1}) immediately.

\item \textbf{Step 3} Observe that (\ref{H}) implies
\begin{eqnarray}
   && \int_{B_{r^\epsilon_3}(x_0)}\left( L_1^2|\Delta Q^\epsilon|^2+\left|\frac{\mathcal{J}(Q^\epsilon)}{\epsilon}\right|^2\right)\nonumber \\
   &=& -\int_{B_{r^\epsilon_3}(x_0)}\frac{2L_1}{\epsilon}\partial_k Q^\epsilon: \partial_k \mathcal{J}(Q) +\int_{B_{r^\epsilon_3}(x_0)}|H^\epsilon|^2-\int_{\partial B_{r_3^\epsilon}(x_0)} \frac{2L_1}{\epsilon} \frac{\partial\hat{F}_b(Q^\epsilon)}{\partial \nu}. \label{hh2}
\end{eqnarray}
The last two terms on the right hand side of (\ref{hh2}) have been estimated by \textbf{Step 2} and (\ref{I-2}). The most difficult task is to estimate the first
integral on the right hand side of (\ref{hh2}). Observe that
\begin{eqnarray}
   \frac{1}{\epsilon}\partial_k Q^\epsilon: \partial_k \mathcal{J}(Q)&= &\frac{1}{\epsilon}\left[-b\lambda_1^\epsilon |\nabla Q^\epsilon|^2-b \partial_k (Q^\epsilon)^2 :\partial_k Q^\epsilon +\frac{c}{2}|\nabla |Q^\epsilon|^2|^2\right] \nonumber\\
   && +\frac{1}{\epsilon}\left( -a +b\lambda_1^\epsilon +c|Q^\epsilon|^2\right)|\nabla Q^\epsilon|^2:= J_1+J_2  ,\label{small-7}
\end{eqnarray}
where $\lambda_1^\epsilon$ is the minimum eigenvalue of $Q^\epsilon$. One of the key facts is that $J_1\geq 0$ provided that $b>0$, $b^2 +24ac>0$ and $\text{dist} (Q^\epsilon, \mathcal{N})$ is suitably small. This can be proved by very careful and delicate calculations based on the algebraic structure of $Q^\epsilon$ and the properties established in \textbf{Step 1} (for details, see \textbf{Step 3} in the proof of Lemma \ref{Lm}).

  \item \textbf{step 4} It remains to estimate the integral $J_2$. To this end, by using some algebraic properties of $Q^\epsilon$, (\ref{small-2}) in \textbf{Step 1}, and the geometric structure of $\mathcal{J}$ (Lemma \ref{le-eq}), one can obtain easily that $|-a +b\lambda_1^\epsilon +c|Q^\epsilon|^2|\leq C|\mathcal{J}(Q^\epsilon)|$ with $\text{dist} (Q^\epsilon, \mathcal{N})$ being suitably small. It then follows that
\begin{eqnarray}
   && \int_{B_{r_3^\epsilon}} \frac{1}{\epsilon}\left( -a +b\lambda_1^\epsilon +c|Q^\epsilon|^2\right)|\nabla Q^\epsilon|^2 \nonumber\\
  &\leq& \int_{B_{r_3^\epsilon}} \frac12 \left|\frac{\mathcal{J}(Q^\epsilon)}{\epsilon}\right|^2 + C_0 \int_{B_{r_3^\epsilon}} |\nabla Q^\epsilon|^2 \int_{B_{r_3^\epsilon}}(|\Delta Q^\epsilon|^2 + |\nabla Q^\epsilon|^2), \label{small-8}
\end{eqnarray}
 where H\"{o}lder's and Ladyzhenskaya's inequalities have been used. Consequently, we can obtain the uniform estimate of $||\Delta Q^\epsilon||_{L^2(B_{r_3^\epsilon})}$ for small $\delta_0$ by collecting (\ref{hh2})-(\ref{small-8}), which yields the main part of the proof of the Claim \ref{keyclaim}.
\end{itemize}

The rest of this paper is organized as follows: In section 2, some properties of the $Q$-tensor and bulk energy density are discussed; In section 3, we proved the strong convergence under small energy condition at good times; Section 4 is devoted to the proof of the Theorem \ref{MT}.

\section{Properties of the $Q$-tensor and bulk energy density}

\subsection{Properties of the $Q$-tensor}
For a matrix $Q\in \mathcal{N}$, $T_Q\mathcal{N}$ denotes the tangent space to $\mathcal{N}$ at $Q$ in $\mathcal{Q}_0$, $(T_Q\mathcal{N})^\bot_{\mathcal{Q}_0}$ denotes the orthogonal complement of $T_Q\mathcal{N}$ in $\mathcal{Q}_0$, and $\mathcal{P}_{\mathcal{N}}$ denotes the projection operator on $\mathcal{N}$. We list some important geometric properties of $\mathcal{J}(Q^\epsilon)$(see (\ref{J}) for the definition), which will be used later.

\begin{lemma}\cite[Lemma 2.2, Lemma 2.3]{WWZ17}\label{le1}
Let $Q=s_+(d_3\otimes d_3-\frac13 \mathbb{I})\in \mathcal{N}$, and $d_1,d_2$ be unit perpendicular vectors in $V_{d_3}=\{ d^\bot \in \mathbb{S}^2: d^\bot \cdot d_3=0\}$. Then, it holds that
\begin{enumerate}
  \item
  \begin{equation*}
  T_Q\mathcal{N}=\text{Span} \left\{\frac{1}{\sqrt{2}}(d_3\otimes d_2+d_2\otimes d_3), \frac{1}{\sqrt{2}}(d_3\otimes d_1+d_1\otimes d_3)\right\},
  \end{equation*}
  \item \begin{eqnarray*}
          (T_Q\mathcal{N})^\bot_{\mathcal{Q}_0}&=&\text{Span}\left\{\frac{1}{\sqrt{2}}(d_2\otimes d_1+d_1\otimes d_2), \frac{1}{\sqrt{2}}(d_1\otimes d_1-d_2\otimes d_2), \right. \\
           && \left.\sqrt{6} (\frac12 d_1\otimes d_1 +\frac12 d_2\otimes d_2-\frac13 \mathbb{I}) \right\},
        \end{eqnarray*}
  \item $\mathcal{Q}_0 =T_Q \mathcal{N} \oplus (T_Q \mathcal{N})_{\mathcal{Q}_0}^{\bot}$,
  \item For $Q\in \mathcal{Q}_0$, there exists $\delta^*>0$ such that if \text{dist} $(Q,\mathcal{N})<\delta^*$, then
      $$\mathcal{J}(Q)\in (T_{\mathcal{P}_{\mathcal{N}}(Q)}(\mathcal{N}))_{\mathcal{Q}_0}^{\bot}.$$

\end{enumerate}
\end{lemma}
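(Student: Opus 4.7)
The plan is to establish (1)--(3) by a short differential-geometric and linear-algebraic computation, and to treat (4) as the substantive content via $O(3)$-equivariance and a direct matrix identity.

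For (1), I would parametrize $\mathcal{N}$ by $\Phi:\mathbb{S}^2\to\mathcal{N}$, $d\mapsto s_+(d\otimes d-\tfrac13\mathbb{I})$. Its differential at $d_3$ sends any $\dot d\in T_{d_3}\mathbb{S}^2=V_{d_3}$ to $D\Phi(d_3)\dot d=s_+(\dot d\otimes d_3+d_3\otimes\dot d)$. Taking $\dot d=d_1,d_2$ produces a spanning pair for $T_Q\mathcal{N}$, and normalizing in the Frobenius inner product $A:B$ gives exactly the two basis vectors claimed. For (2) I would argue by dimension count: $\dim\mathcal{Q}_0=5$ and $\dim T_Q\mathcal{N}=2$, so the orthogonal complement has dimension $3$; each of the three candidate vectors is manifestly symmetric and traceless (using $d_1\otimes d_1+d_2\otimes d_2+d_3\otimes d_3=\mathbb{I}$ to rewrite the third), is of unit Frobenius norm, and is pairwise orthogonal to the others and to the tangent basis of (1) because the tangent vectors each carry a $d_3$-factor while the normal vectors only involve $d_1,d_2$ or $d_3\otimes d_3$. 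Part (3) is then immediate from the orthogonal decomposition theorem in the finite-dimensional inner product space $\mathcal{Q}_0$.

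For (4), the key observation is that both $\mathcal{J}$ and the nearest-point projection $\mathcal{P}_\mathcal{N}$ are $O(3)$-equivariant under the conjugation action $Q\mapsto RQR^T$, since $\hat F_b$ depends only on $\text{tr}\,Q^2$ and $\text{tr}\,Q^3$. Given $Q$ with $\text{dist}(Q,\mathcal{N})<\delta^*$ and $Q_0:=\mathcal{P}_\mathcal{N}(Q)=s_+(d_0\otimes d_0-\tfrac13\mathbb{I})$, I choose $R\in O(3)$ with $Re_3=d_0$ to reduce to the case $d_0=e_3$, so that $Q_0=\mathrm{diag}(-s_+/3,-s_+/3,2s_+/3)$. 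Using the description of $T_{Q_0}\mathcal{N}$ from (1), the condition $Q-Q_0\in(T_{Q_0}\mathcal{N})^\bot_{\mathcal{Q}_0}$, which is forced by the nearest-point characterization, translates exactly into $Q_{13}=Q_{23}=0$, i.e., $Q$ has block-diagonal form with a $2\times 2$ symmetric block in the $(e_1,e_2)$-plane and scalar entry $Q_{33}$. A one-line check then yields $(Q^2)_{13}=Q_{1k}Q_{k3}=0$ and $(Q^2)_{23}=0$, so every term in $\mathcal{J}(Q)=-aQ-b(Q^2-\tfrac13|Q|^2\mathbb{I})+c|Q|^2Q$ has vanishing $(1,3)$ and $(2,3)$ entries; hence $\mathcal{J}(Q)\in(T_{Q_0}\mathcal{N})^\bot_{\mathcal{Q}_0}$.

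The main obstacle in this plan is to justify the existence of $\delta^*$, equivalently, the existence of a tubular neighborhood of $\mathcal{N}$ on which $\mathcal{P}_\mathcal{N}$ is well-defined and smooth. This reduces to verifying that $\mathcal{N}$ is a smoothly embedded submanifold of $\mathcal{Q}_0$, which follows once I check that $\Phi$ has rank $2$ at every $d$ (and factors through the involution $d\sim-d$); any $\delta^*$ below the reach of $\mathcal{N}$ then works. A pleasant feature of the final calculation is that it is exact rather than perturbative: once $Q-Q_0$ is orthogonal to $T_{Q_0}\mathcal{N}$, the membership $\mathcal{J}(Q)\in(T_{Q_0}\mathcal{N})^\bot_{\mathcal{Q}_0}$ holds with no higher-order correction, which is a specifically algebraic consequence of the cubic--quartic form of the bulk potential and the uniaxial structure of $\mathcal{N}$.
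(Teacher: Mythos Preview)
Your proposal is correct. Note that the paper does not supply its own proof of this lemma: it is quoted verbatim from \cite[Lemmas~2.2--2.3]{WWZ17}, so there is no in-paper argument to compare against. Your treatment of (1)--(3) is the standard one, and your argument for (4) is in fact slightly cleaner than what one usually sees: rather than expanding $\mathcal{J}(Q)$ perturbatively around $Q_0=\mathcal{P}_{\mathcal{N}}(Q)$, you observe that the first-order optimality condition for the nearest point forces $Q-Q_0\in(T_{Q_0}\mathcal{N})^\perp_{\mathcal{Q}_0}$, which after an $O(3)$-rotation to $d_0=e_3$ means $Q$ is block-diagonal, and then block-diagonality propagates \emph{exactly} through each monomial of $\mathcal{J}$. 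This makes the role of $\delta^*$ transparent---it is needed only to guarantee that $\mathcal{P}_{\mathcal{N}}$ is single-valued, not for any Taylor remainder estimate---and it explains why the conclusion holds without smallness once $\mathcal{P}_{\mathcal{N}}(Q)$ is fixed. One small point worth stating explicitly is that $\mathcal{J}(Q)\in\mathcal{Q}_0$ (tracelessness of $Q^2-\tfrac13|Q|^2\mathbb{I}$), which you use implicitly when concluding membership in $(T_{Q_0}\mathcal{N})^\perp_{\mathcal{Q}_0}$.
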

\subsection{The equivalence of bulk energy density}
To estimate the term $\mathcal{J}(Q^\epsilon)$, one needs also the following equivalence of the bulk energy.
\begin{lemma}\cite{MZ10,WWZ17}\label{le-eq}
There exists $\delta^*>0$ such that if $\text{dist}(Q,\mathcal{N})<\delta^*$, then
\begin{equation}\label{E-Fb1}
\frac{1}{C}\text{dist}(Q,\mathcal{N})^2\leq \hat{F}_b(Q)\leq C\text{dist}(Q,\mathcal{N})^2,
\end{equation}
\begin{equation}\label{E-Fb2}
  \frac{1}{C}\hat{F}_b(Q)\leq |\mathcal{J}(Q)|^2\leq C\hat{F}_b(Q),
\end{equation}
where $C$ depends on $a,b$ and $c$, but independent of $Q$, and
$$\text{dist}(Q,\mathcal{N}) =|Q-\mathcal{P}_{\mathcal{N}}(Q)|=\min_{A\in \mathcal{N}}\{ |Q-A|\}.$$
\end{lemma}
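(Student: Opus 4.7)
The plan is to establish both equivalences by a Taylor expansion of $\hat{F}_b$ and $\mathcal{J}$ around the manifold of minimizers $\mathcal{N}$, combined with an $SO(3)$-symmetry argument that reduces all estimates at an arbitrary base point of $\mathcal{N}$ to the corresponding estimates at a single standard one. The key observation is that $SO(3)$ acts on $\mathcal{Q}_0$ by conjugation $Q \mapsto R Q R^T$, that both $\hat{F}_b$ and $|\mathcal{J}|^2$ are invariant under this action (each depending only on the rotation-invariant scalars $|Q|^2$ and $\text{tr}\, Q^3$), and that the action is transitive on $\mathcal{N}$. Consequently, all constants produced at the base point $Q_0 := s_+(e_3 \otimes e_3 - \tfrac{1}{3}\mathbb{I})$ transfer uniformly to every $P \in \mathcal{N}$.

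For the first, algebraic, step I would compute the Hessian $\mathcal{H} := D\mathcal{J}(Q_0)$ explicitly on the normal space $(T_{Q_0}\mathcal{N})^{\perp}_{\mathcal{Q}_0}$, using the three orthonormal basis vectors described in Lemma \ref{le1}(2). A direct diagonalization yields three eigenvalues that are explicit polynomials in $a,b,c,s_+$. Using the critical-point relation $-3a - b s_+ + 2c s_+^2 = 0$ (from $f_b'(s_+) = 0$, as in the remark) together with $c>0$, $b>0$, $b^2+24ac>0$, which make $s_+$ a strict, non-degenerate minimum of the reduced polynomial $f_b(s)$, one checks that all three eigenvalues are strictly positive. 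This yields a coercivity constant $\mu > 0$ with $\langle \mathcal{H}\Phi, \Phi \rangle \geq \mu|\Phi|^2$ for every $\Phi \in (T_{Q_0}\mathcal{N})^{\perp}_{\mathcal{Q}_0}$, and by the $SO(3)$-invariance the same bound holds at every $P \in \mathcal{N}$ for normal directions at $P$.

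For the main estimates, choose $\delta^* > 0$ small enough that the nearest-point projection $\mathcal{P}_{\mathcal{N}}$ is a well-defined smooth retraction on the $\delta^*$-tubular neighborhood of the compact smooth manifold $\mathcal{N}$ (available by the tubular neighborhood theorem). Writing $P := \mathcal{P}_{\mathcal{N}}(Q)$ and $\Phi := Q - P \in (T_P \mathcal{N})^{\perp}_{\mathcal{Q}_0}$, and using $\hat{F}_b(P) = 0$ and $\mathcal{J}(P) = 0$, Taylor's formula gives
\[
\hat{F}_b(Q) = \tfrac{1}{2}\langle \mathcal{H}_P \Phi, \Phi \rangle + O(|\Phi|^3), \qquad \mathcal{J}(Q) = \mathcal{H}_P \Phi + O(|\Phi|^2).
\]
Since $\mathcal{H}_P$ is symmetric (as the Hessian of $\hat{F}_b$) and, by the previous paragraph, coercive on the normal space with constant $\mu$, the first identity gives $\tfrac{\mu}{4}|\Phi|^2 \leq \hat{F}_b(Q) \leq C|\Phi|^2$ once $\delta^*$ is shrunk to absorb the cubic remainder, and the second gives $\tfrac{\mu^2}{4}|\Phi|^2 \leq |\mathcal{J}(Q)|^2 \leq C|\Phi|^2$. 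Since $|\Phi| = \text{dist}(Q,\mathcal{N})$ by the very definition of the projection, (\ref{E-Fb1}) follows immediately, and (\ref{E-Fb2}) is obtained by composing the two chains of inequalities.

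The main obstacle is the explicit algebraic verification that $\mathcal{H}_{Q_0}$ is positive definite on the three normal directions of Lemma \ref{le1}(2) under the stated sign assumptions on $a,b,c$; this is elementary but requires careful bookkeeping, and is where the specific quartic structure of the bulk potential $F_b$ actually enters. All other ingredients---the existence of a smooth tubular neighborhood of $\mathcal{N}$, the $SO(3)$-invariance that reduces all estimates to a single base point, and the first-order Taylor expansions of $\hat{F}_b$ and $\mathcal{J}$---are entirely standard.
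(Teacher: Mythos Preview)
The paper does not give its own proof of this lemma; it simply cites \cite{MZ10,WWZ17} and uses the result as a black box. So there is no ``paper's proof'' to compare against directly.

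That said, your proposed argument is correct and is essentially the standard one underlying the cited references: since $\hat{F}_b$ and $|\mathcal{J}|^2$ depend only on the isotropic invariants $|Q|^2$ and $\mathrm{tr}\,Q^3$, the $SO(3)$-conjugation action reduces everything to a single base point $Q_0\in\mathcal{N}$; there the Hessian $D\mathcal{J}(Q_0)=D^2\hat F_b(Q_0)$ annihilates the tangent space $T_{Q_0}\mathcal{N}$ (because $\mathcal{N}$ is a manifold of critical points) and is strictly positive on the three normal directions of Lemma~\ref{le1}(2), which is precisely the non-degeneracy of $s_+$ as a minimizer of the reduced scalar potential $f_b$; Taylor expansion on a tubular neighborhood then gives both \eqref{E-Fb1} and \eqref{E-Fb2}. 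One small clarification worth adding: the symmetry of $\mathcal{H}_P$ together with $\mathcal{H}_P|_{T_P\mathcal{N}}=0$ forces $\mathcal{H}_P$ to map $(T_P\mathcal{N})^\perp_{\mathcal{Q}_0}$ into itself, so the lower bound $|\mathcal{J}(Q)|\ge \tfrac{\mu}{2}|\Phi|$ in your second Taylor identity is indeed legitimate and not just a quadratic-form estimate. With that, your outline is complete.
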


\section{The strong convergence under small energy condition}
As discussed in the introduction, in this section, we establish the strong convergence in $H^1$ under the small energy condition for solutions $Q^\epsilon$ to (\ref{Q3}), which is crucial to estimate the set of potential concentration points of $\nabla Q^\epsilon \odot \nabla Q^\epsilon$.
\begin{lemma}\label{Lm}
For $x_0\in \mathbb{R}^2, r_0>0$ and $B_{4r_0}(x_0)\subset \mathbb{R}^2$, let
\begin{equation}\label{Q-harmonic}
  L_1\Delta Q^\epsilon -\frac{\mathcal{J}(Q^\epsilon)}{\epsilon}=H^\epsilon \quad\text{in } B_{4r_0}(x_0),
\end{equation}
and $Q^\epsilon \in H^3(B_{4r_0}(x_0))$. Assume that
\begin{description}
  \item[(I)] there exists small $\delta_0>0$ such that
  $$\int_{B_{4r_0}(x_0)}\left(|\nabla Q^\epsilon|^2+\frac{\hat{F}_b(Q^\epsilon)}{\epsilon}\right)dx<\delta_0,$$
  \item[(II)] $||H^\epsilon||_{L^2(B_{4r_0}(x_0))}< C_0$,
  \item[(III)]   $b>0, \, b^2+24ac>0,$
\end{description}
where $\delta_0$ and $C_0$ are independent of $\epsilon$. Then there exists a subsequence of $\{Q^\epsilon\}_{\epsilon>0}$ such that
$$Q^\epsilon\rightarrow Q^* ,\text{ in } H^1(B_{r_0}(x_0)),\quad \frac{\mathcal{J}(Q^\epsilon)}{\epsilon}\rightharpoonup \mathcal{J}^* ,\text{ in } L^2(B_{r_0}(x_0)),\quad \int_{B_{r_0}(x_0)}\frac{\hat{F}_b(Q^\epsilon)}{\epsilon}dx\rightarrow 0,$$
as $\epsilon \rightarrow 0$, where $Q^*=s_+(d^*\otimes d^*-\frac13 \mathbb{I}), \, d^*\in \mathbb{S}^2$ satisfies
$$L_1\Delta Q^* -\mathcal{J}^*=H^*$$
in the weak sense, with $H^\epsilon \rightharpoonup H^* \text{ in } L^2(B_{r_0}(x_0))$ and
$$\mathcal{J}^*\in (T_{Q^*}\mathcal{N})_{\mathcal{Q}_0}^{\bot}\text{  a.e. in } B_{r_0}(x_0).$$

\end{lemma}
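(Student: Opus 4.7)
My plan is to upgrade the weak $H^1$ convergence that already follows from hypothesis (I) to strong $H^1$ convergence on the smaller ball $B_{r_0}(x_0)$ by proving an $\epsilon$-uniform $L^2$ bound of $\Delta Q^\epsilon$ (equivalently of $\mathcal{J}(Q^\epsilon)/\epsilon$) on an intermediate radius. Once that is in hand, Rellich-Kondrachov delivers a strongly $H^1$-convergent subsequence; the limit $Q^*$ must lie in $\mathcal{N}$ a.e.\ because $\int \hat{F}_b(Q^\epsilon)/\epsilon \to 0$ (via $|\mathcal{J}(Q^\epsilon)|^2 \gtrsim \hat F_b(Q^\epsilon)$ from Lemma \ref{le-eq} and the $L^2$ bound on $\mathcal{J}(Q^\epsilon)/\epsilon$); and the weak $L^2$ limit $\mathcal{J}^*$ of $\mathcal{J}(Q^\epsilon)/\epsilon$ satisfies the desired orthogonality thanks to Lemma \ref{le1}(iv) combined with the pointwise convergence $\mathcal{P}_{\mathcal N}(Q^\epsilon)\to Q^*$. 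I would organise the argument in four steps.

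The first step is a pointwise closeness estimate: if (I) holds with $\delta_0$ sufficiently small, then $\mathrm{dist}(Q^\epsilon,\mathcal N) \leq \delta_0^{1/8}$ on $B_{3r_0}(x_0)$. Because there is no maximum principle, I would split $B_{3r_0}$ into the good set $\mathcal C_1^\epsilon = \{\mathrm{dist}(Q^\epsilon,\mathcal N) < \delta_0^{1/10}\}$ (where Lemma \ref{le-eq} gives $|\mathcal{J}(Q^\epsilon)|^2 \lesssim \hat F_b(Q^\epsilon)$) and its complement. Rescaling by $y=(x-x_0)/\sqrt\epsilon$ turns (\ref{Q-harmonic}) into $L_1\Delta_y \tilde Q = \mathcal{J}(\tilde Q) + \epsilon H^\epsilon$, and elliptic regularity on unit-size balls gives a $|x-y|^{1/2}/\epsilon^{1/4}$ modulus of continuity for $Q^\epsilon$ on balls of radius $\sim(\delta^*)^2\sqrt\epsilon$. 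This lower bound on the decay of $\hat F_b(Q^\epsilon)$ around any hypothetical point of $\mathcal{C}_2^\epsilon$ contradicts $\int \hat F_b/\epsilon < \delta_0$ once $\delta_0$ is sufficiently small, forcing $\mathcal C_2^\epsilon=\emptyset$. The second step is a Pohozaev-type boundary flux estimate: pigeonholing the radial integral of $|\nabla Q^\epsilon|^2$ on two disjoint dyadic sub-annuli picks radii $r_1^\epsilon < r_2^\epsilon$ with small trace; testing (\ref{Q-harmonic}) by $\frac{x-x_0}{|x-x_0|}\cdot\nabla Q^\epsilon$ on $B_{r_2^\epsilon}\setminus B_{r_1^\epsilon}$ and using hypothesis (II) then produces a further intermediate radius $r_3^\epsilon\in(r_0,3r_0)$ for which the boundary flux $|\int_{\partial B_{r_3^\epsilon}} \epsilon^{-1}\partial_\nu \hat F_b(Q^\epsilon)\,dS|$ is controlled by an $\epsilon$-independent constant.

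The third step, which I expect to be the main obstacle, is the energy identity obtained by squaring (\ref{Q-harmonic}) and integrating on $B_{r_3^\epsilon}$: this yields the decomposition (\ref{hh2}) in which the boundary flux and $\|H^\epsilon\|_{L^2}^2$ terms are already controlled by Step 2 and hypothesis (II), leaving the cross-term $\epsilon^{-1}\int \partial_k Q^\epsilon{:}\partial_k \mathcal{J}(Q^\epsilon)$. Expanding $\mathcal J$ from (\ref{J}) in the eigenframe of $Q^\epsilon$ and splitting the result as $J_1+J_2$ in (\ref{small-7}), the critical algebraic claim is that $J_1 \geq 0$ under $b>0$, $b^2+24ac>0$ and $\mathrm{dist}(Q^\epsilon,\mathcal N) \leq \delta_0^{1/8}$. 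Proving this requires a careful case analysis of the quadratic form $-b\lambda_1^\epsilon|\nabla Q^\epsilon|^2 - b\partial_k (Q^\epsilon)^2{:}\partial_k Q^\epsilon + \tfrac{c}{2}|\nabla |Q^\epsilon|^2|^2$: the pointwise smallness from Step 1 pins the smallest eigenvalue near $-s_+/3$ and keeps the other two close to $2s_+/3$, and the condition $b^2+24ac > 0$ is precisely what makes the discriminant of the resulting quadratic form of $\partial_k Q^\epsilon$ have the correct sign. This eigenvalue-based computation is the technical heart of the proof.

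The fourth step disposes of $J_2$. Lemma \ref{le-eq} and the orthogonality in Lemma \ref{le1}(iv), together with $\mathrm{dist}(Q^\epsilon,\mathcal N)\ll 1$, yield the pointwise bound $|-a+b\lambda_1^\epsilon+c|Q^\epsilon|^2| \leq C|\mathcal{J}(Q^\epsilon)|$. Combining this with Hölder's and Ladyzhenskaya's inequalities on $B_{r_3^\epsilon}$ produces (\ref{small-8}); the first term on its right-hand side is absorbed into the $\|\mathcal{J}(Q^\epsilon)/\epsilon\|_{L^2}^2$ on the left of (\ref{hh2}), and the Ladyzhenskaya term is absorbed into $L_1^2\|\Delta Q^\epsilon\|_{L^2}^2$ provided $\delta_0$ is small (using hypothesis (I) to bound $\int|\nabla Q^\epsilon|^2$). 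This closes the estimate and gives a uniform bound $\|\Delta Q^\epsilon\|_{L^2(B_{r_3^\epsilon})} + \|\mathcal{J}(Q^\epsilon)/\epsilon\|_{L^2(B_{r_3^\epsilon})} \leq C$. Passing to the limit via Rellich then gives strong $H^1(B_{r_0})$ convergence of $Q^\epsilon$, weak $L^2(B_{r_0})$ convergence of $\mathcal{J}(Q^\epsilon)/\epsilon$ and of $H^\epsilon$, and $\int \hat F_b(Q^\epsilon)/\epsilon \to 0$; the limit equation $L_1\Delta Q^* - \mathcal{J}^* = H^*$ holds in the distributional sense and Lemma \ref{le1}(iv) combined with pointwise convergence identifies $\mathcal{J}^* \in (T_{Q^*}\mathcal{N})^{\bot}_{\mathcal Q_0}$ a.e.
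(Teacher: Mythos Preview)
Your proposal follows essentially the same four-step strategy as the paper's proof (pointwise closeness via rescaled elliptic estimates and contradiction; Pohozaev boundary flux at a good radius; nonnegativity of $J_1$ via eigenframe algebra; absorption of $J_2$ via Lemma~\ref{le-eq} and Ladyzhenskaya). Two small corrections are worth flagging. First, when $\mathrm{dist}(Q^\epsilon,\mathcal N)$ is small the eigenvalues satisfy $\lambda_1^\epsilon,\lambda_2^\epsilon\approx -s_+/3$ and $\lambda_3^\epsilon\approx 2s_+/3$, not the other way around; the signs $\lambda_1^\epsilon,\lambda_2^\epsilon<0<\lambda_3^\epsilon$ are used explicitly in the paper's Step~3 to determine the signs of the error terms $E_1^\epsilon,E_2^\epsilon$ and hence to reduce the $J_1\ge 0$ claim to a two-variable quadratic form in $(\nabla\lambda_1^\epsilon,\nabla\lambda_2^\epsilon)$. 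Second, for the final orthogonality conclusion the paper does more than invoke pointwise convergence of $\mathcal P_{\mathcal N}(Q^\epsilon)$: it uses the uniform $H^2$ bound to get $\|\nabla d_3^\epsilon\|_{L^p}\le C\|\nabla Q^\epsilon\|_{L^p}$ and hence $d_3^\epsilon\to d^*$ in $C^\alpha$, which is what allows the weak $L^2$ limit of $\mathcal J(Q^\epsilon)/\epsilon$ to be tested against the (moving) tangent directions $d_3^\epsilon\otimes d_j^*+d_j^*\otimes d_3^\epsilon$.
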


\begin{proof}
This will be proved by the following four steps.

\textbf{Step 1. Claim:} $\text{\textbf{dist}}\bm{(Q^\epsilon, \mathcal{N})<\delta_0^{\frac18}}$, \textbf{for all} $\bm{ x\in B_{3r_0}(x_0)}$. To prove this claim, we decompose $B_{3r_0}(x_0)$ as a disjoint union of $\mathcal{C}_1^\epsilon$ and $\mathcal{C}_2^\epsilon$ defined as
$$\mathcal{C}_1^\epsilon=\{x\in B_{3r_0}(x_0): \text{dist}(Q^\epsilon(x), \mathcal{N})<\delta_0^{\frac{1}{10}} \}, \quad \mathcal{C}_2^\epsilon= B_{3r_0}(x_0) \setminus \mathcal{C}_1^\epsilon.$$
Since $\hat{F}_b(Q)=0$ if and only if $Q\in \mathcal{N}$, one has
$$\hat{F}_b(Q^\epsilon(x)) \geq C_* \delta_0^{\frac15} \text{ for all } x\in \mathcal{C}_2^\epsilon$$
with some $C_*>0$. This and the condition \textbf{(I)} imply that
$$|\mathcal{C}_2^\epsilon|\leq \frac{\epsilon}{C_*}\delta_0^{\frac45}.$$
Therefore, $\mathcal{C}_1^\epsilon$ is not empty when $\epsilon$ is sufficiently small. Let $\delta^*$ be the geometric quantity given in Lemma \ref{le-eq} and choose $\delta_0$ suitably small so that $\delta_0^\frac{1}{10}<\frac14 \delta^*$. Then it follows from the continuity of $Q^\epsilon$ that for each $y\in \mathcal{C}_1^\epsilon$, there exists $r^\epsilon_y>0$ such that
\begin{equation}\label{r-e}
  \text{dist}(Q^\epsilon(x),\mathcal{N})<\delta^* \text{ for } x\in B_{r^\epsilon_y}(y).
\end{equation}

For fixed $\bar{y}\in \mathcal{C}_{1}^\epsilon$, $0<\sqrt{\epsilon} <r_0$, the rescaled quantity $\hat{Q}^\epsilon (x)= Q^\epsilon(\bar{y}+\sqrt{\epsilon} x)$ satisfies
$$L_1\Delta \hat{Q}^\epsilon  -\mathcal{J}(\hat{Q}^\epsilon) =\hat{H}^\epsilon \text{ in } B_{r_{\bar{y}}^\epsilon/\sqrt{\epsilon}}(\mathbf{0}),$$
where $\hat{H}^\epsilon(x)=\epsilon H(\bar{y}+\sqrt{\epsilon} x)$ and $r_{\bar{y}}^\epsilon$ is given in (\ref{r-e}). It follows from (II), (I), (\ref{r-e}), and the structure of $\mathcal{N}$ that
$$\int_{B_{r_{\bar{y}}^\epsilon/\sqrt{\epsilon}}(\mathbf{0})} |\hat{H}^\epsilon|^2dy =\epsilon\int_{B_{r_{\bar{y}}^\epsilon}(\bar{y})}|H^\epsilon|^2 dx\leq \epsilon C_0$$
and
$$ \int_{B_{r_{\bar{y}}^\epsilon/\sqrt{\epsilon}}(\mathbf{0})}|\hat{Q}^\epsilon|^2 dy \leq \pi \left(\frac{r_{\bar{y}}^\epsilon}{\sqrt{\epsilon}}\right)^2\left(\sqrt{\frac23} s_+ +\delta^*\right)^2.$$
Meanwhile, Lemma \ref{le-eq} and condition \textbf{(I)} imply that
$$\int_{B_{r_{\bar{y}}^\epsilon}/\sqrt{\epsilon}(\mathbf{0})} |\mathcal{J}(\hat{Q}^\epsilon)|^2 dy =\int_{B_{r_{\bar{y}}^\epsilon}(\bar{y})} \frac{|\mathcal{J}(Q^\epsilon)|^2}{\epsilon} dx \leq C \int_{B_{r_{\bar{y}}^\epsilon}(\bar{y})} \frac{\hat{F}_b(Q^\epsilon)}{\epsilon} dx \leq C\delta_0.$$

Then, these and the classic elliptic theory \cite[Theorem 9.9]{GT01} yield
$$||\hat{Q}^\epsilon ||_{H^2(B_{r_{\bar{y}}^\epsilon/\sqrt{\epsilon}}(\mathbf{0}))} \leq C \left[\left(\frac{r_{\bar{y}}^\epsilon}{\sqrt{\epsilon}}\right)^2+1\right],$$
where $C$ is independent of $\epsilon$. Therefore, one obtains that
\begin{equation}\label{Hold2}
  |Q^\epsilon(x)-Q^\epsilon(y)| \leq C_1 \left[\left(\frac{r_{\bar{y}}^\epsilon}{\sqrt{\epsilon}}\right)^2+1\right] \left( \frac{|x-y|}{\sqrt{\epsilon}}\right)^\frac12 \text{ in } B_{r_{\bar{y}}^\epsilon}(\bar{y})
\end{equation}
by the embedding theorem.

Next we show that $r_{\bar{y}}^\epsilon\geq C_2 (\delta^*)^2 \sqrt{\epsilon}$ for some $C_2>0$. If $r_{\bar{y}}^\epsilon\leq (\delta^*)^2 \sqrt{\epsilon}$ and $r_{\bar{y}}^\epsilon\leq \frac12 r_0$, one can find a $x^*\in B_{4r_0}(x_0)\cap B_{r_{\bar{y}}^\epsilon}(\bar{y})$ such that
$\text{dist}(Q^\epsilon(x^*),\mathcal{N}) \geq \frac34 \delta^*$. Then, taking $y=\bar{y}$ and $x=x^*$ in the estimate (\ref{Hold2}) leads to
$$\frac{\delta^*}{2}\leq |Q^\epsilon(x^*)-Q^\epsilon(\bar{y})|\leq 2C_1\left(\frac{|x^*-\bar{y}|}{\sqrt{\epsilon}}\right)^{\frac12}\leq 2C_1\left(\frac{r_{\bar{y}}^\epsilon}{\sqrt{\epsilon}}\right)^{\frac12},$$
which yields
$$r_{\bar{y}}^\epsilon\geq \left(\frac{\delta^*}{4C_1}\right)^2\sqrt{\epsilon}.$$
Therefore, setting
$$C_2=\min\left\{1,\frac{1}{16C_1^2}, \frac{r_0}{2(\delta^*)^2}\right\},$$
one gets that $r_{\bar{y}}^\epsilon\geq C_2 (\delta^*)^2 \sqrt{\epsilon}$.

We are now ready to prove that $\text{dist}(Q^\epsilon(x),\mathcal{N})<\delta_0^\frac18$ for $x\in \mathcal{C}_{1}^\epsilon$. If not, there exists $x_1\in \mathcal{C}_1^\epsilon$ with $\text{dist}(Q^\epsilon(x_1),\mathcal{N})\geq \delta_0^\frac18$. Then, using Lemma \ref{le-eq} and the estimate (\ref{Hold2}) with $r_{x_1}^\epsilon=C_2(\delta^*)^2 \sqrt{\epsilon}$($r_{x_1}^\epsilon$ is given in (\ref{r-e})), one can get that
$$\hat{F}_b(Q^\epsilon)\geq C\text{dist}(Q,\mathcal{N})^2\geq C\frac14\delta_0^\frac14=C_3\delta_0^\frac14 \text{ in } B_{\delta_\epsilon}(x_1), \text{ and } \delta_\epsilon=\frac{\delta_0^\frac14\sqrt{\epsilon}}{4C_1^2[C_2^2(\delta^*)^4+1]^2} .$$
Note that $\delta_\epsilon <r^\epsilon_{x_1}$ for suitably small $\delta_0$. Then,
$$\int_{B_{\delta_\epsilon}(x_1)}\frac{\hat{F}_b(Q^\epsilon)}{\epsilon} dx \geq \pi \frac{\delta_0^\frac12}{16C_1^4[C_2^2(\delta^*)^4+1]^4} C_3\delta_0^\frac14 =C_+ \delta_0^{\frac34},$$
which contradicts the assumption that
$$\int_{B_{4r_0}(x_0)} \frac{\hat{F}_b(Q^\epsilon)}{\epsilon} \leq \delta_0$$
for a sufficiently small $\delta_0>0$(for example $\delta_0<C_+^4$). Thus one has shown that
\begin{equation}\label{le-new}
  \text{dist}(Q^\epsilon(x),\mathcal{N}) \leq \delta_0^\frac18 \text{ for } x\in C_1^\epsilon.
\end{equation}

If $C_2^\epsilon$ is not empty, by the definition of $C_1^\epsilon$, then $\text{dist}(Q^\epsilon(z),\mathcal{N}) =\delta_0^{\frac{1}{10}}$ for $z\in \partial \mathcal{C}_1^\epsilon$. This contradicts the estimate (\ref{le-new}) when $\delta_0<1$. Consequently, $\mathcal{C}_2^\epsilon= \emptyset$. Thus the desired claim holds.

\textbf{Step 2. The goal is to show that there exists a $\bm{r_3^\epsilon\in(r_0 ,3r_0)}$ such that}
\begin{equation}\label{Key-L3}
  \bm{\left|\int_{\partial B_{r_3^\epsilon}(x_0)} \frac{1}{\epsilon} \frac{\partial\hat{F}_b(Q^\epsilon)}{\partial \nu}dS\right|\leq C_0.}
\end{equation}
It follows from $||\nabla Q^\epsilon||_{L^2(B_{4r_0}(x_0))}^2 \leq \delta_0$ that for every $\epsilon>0$, there exist $r_1^\epsilon\in (r_0, \frac32 r_0)$ and $r_2^\epsilon\in (2r_0,3r_0)$ such that
\begin{equation}\label{l1-p1}
||\nabla Q^\epsilon||_{L^2(\partial B_{r_1^\epsilon}(x_0))}^2<\frac{8\delta_0}{r_0},\quad ||\nabla Q^\epsilon||_{L^2(\partial B_{r_2^\epsilon}(x_0))}^2< \frac{8\delta_0}{r_0}.
\end{equation}
Multiplying (\ref{Q-harmonic}) by $\frac{x-x_0}{|x-x_0|}\cdot \nabla Q^\epsilon $ and integrating over $B_{r_2^\epsilon}(x_0)\setminus B_{r_1^\epsilon}(x_0)$, one gets
\begin{equation}\label{l1-p2}
  0=\int_{B_{r_2^\epsilon}(x_0)\setminus B_{r_1^\epsilon}(x_0)}(L_1\Delta Q^\epsilon -\frac{\mathcal{J}(Q^\epsilon)}{\epsilon}-H^\epsilon):(\frac{x-x_0}{|x-x_0|} \cdot \nabla Q^\epsilon)dx=I_1+I_2+I_3.
\end{equation}
Now we estimate $I_1,I_2$ and $I_3$ respectively as follows.
\begin{eqnarray*}
  I_1 &=& L_1\int_{B_{r_2^\epsilon}(x_0)\setminus B_{r_1^\epsilon}(x_0)}\Delta Q^\epsilon :(\frac{x-x_0}{|x-x_0|} \cdot \nabla Q^\epsilon)dx \\
  &=& -L_1\int_{B_{r_2^\epsilon}(x_0)\setminus B_{r_1^\epsilon}(x_0)}\left\{Q^\epsilon_{ij,k}Q^\epsilon_{ij,l} \partial_k\left(\frac{x_l-x_{0l}}{|x-x_0|}\right)-\frac12 |\nabla Q^\epsilon|^2\nabla \cdot\left(\frac{x-x_{0}}{|x-x_0|}\right)\right\}dx\\
  &&+L_1\int_{\partial (B_{r_2^\epsilon}(x_0)\setminus B_{r_1^\epsilon}(x_0))} \left\{\frac{\partial Q^\epsilon}{\partial \nu}:(\frac{x-x_0}{|x-x_0|} \cdot \nabla Q^\epsilon) -\frac12|\nabla Q^\epsilon|^2\frac{(x-x_{0})\cdot \nu}{|x-x_0|}\right \}dS,
\end{eqnarray*}
where $\nu$ is the external normal vector of $B_{r_2^\epsilon}(x_0)\setminus B_{r_1^\epsilon}(x_0)$. Therefore, the estimate (\ref{l1-p1}) and condition \textbf{(I)} imply that
$$|I_1|\leq C_0.$$
$$-I_2=\int_{B_{r_2^\epsilon}(x_0)\setminus B_{r_1^\epsilon}(x_0)}\frac{\mathcal{J}(Q^\epsilon)}{\epsilon}:(\frac{x-x_0}{|x-x_0|} \cdot \nabla Q^\epsilon)dx=\int_{B_{r_2^\epsilon}(x_0)\setminus B_{r_1^\epsilon}(x_0)} \frac{x-x_0}{|x-x_0|}\cdot\nabla \frac{\hat{F}_b(Q^\epsilon)}{\epsilon}dx.$$
$$|I_3|=\left|\int_{B_{r_2^\epsilon}(x_0)\setminus B_{r_1^\epsilon}(x_0)}H^\epsilon:(\frac{x-x_0}{|x-x_0|} \cdot \nabla Q^\epsilon)dx\right| \leq ||\nabla Q^\epsilon||_{L^2(B_{4r_0}(x_0))}||H^\epsilon||_{L^2(B_{4r_0}(x_0))}\leq C_0.$$
Substituting the above three estimates into (\ref{l1-p2}) yields
$$\left|\int_{B_{r_2^\epsilon}(x_0)\setminus B_{r_1^\epsilon}(x_0)} \frac{x-x_0}{|x-x_0|}\cdot\nabla \frac{\hat{F}_b(Q^\epsilon)}{\epsilon} dx\right|\leq C_0.$$
Note that $Q^\epsilon\in H^3(B_{4r_0}(x_0))$. Then $\nabla Q^\epsilon$ is continuous. Therefore, there exists $r_3^\epsilon\in (r_1^\epsilon,r_2^\epsilon)$ such that
\begin{equation*}
  \left|\int_{\partial B_{r_3^\epsilon}(x_0)} \frac{1}{\epsilon} \frac{\partial\hat{F}_b(Q^\epsilon)}{\partial \nu}dS\right|=\left|\int_{\partial B_{r_3^\epsilon}(x_0)} \frac{x-x_0}{|x-x_0|}\cdot\nabla \frac{\hat{F}_b(Q^\epsilon)}{\epsilon} dS\right|\leq \frac{2C_0}{r_0}\leq C_0,
\end{equation*}
which gives the desired estimate (\ref{Key-L3}).

To obtain the strong convergence, we are going to derive the uniform estimate on $||\Delta Q^\epsilon||_{L^2(B_{r^\epsilon_3}(x_0))}$ as follows.
Multiplying (\ref{Q-harmonic}) by $L_1\Delta Q^\epsilon - \frac{\mathcal{J}(Q^\epsilon)}{\epsilon}$ and integrating over $B_{r_3^\epsilon}(x_0)$ lead to
\begin{eqnarray}
   && \int_{B_{r^\epsilon_3}(x_0)}\left( L_1^2|\Delta Q^\epsilon|^2+\left|\frac{\mathcal{J}(Q^\epsilon)}{\epsilon}\right|^2\right)dx\nonumber \\
  &=& \int_{B_{r^\epsilon_3}(x_0)} |H^\epsilon|^2dx+\int_{\partial B_{r_3^\epsilon}(x_0)} \frac{2L_1\partial\hat{F}_b(Q^\epsilon)}{\epsilon\partial \nu}dS - \int_{B_{r^\epsilon_3}(x_0)}\frac{2L_1}{\epsilon}\partial_k Q^\epsilon: \partial_k \mathcal{J}(Q)dx. \label{l1-p5}
\end{eqnarray}
Due to condition (\textbf{II}) and (\ref{Key-L3}), the desired uniform estimate on the left hand side of (\ref{l1-p5}) can be achieved once the last integral on the right hand side of (\ref{l1-p5}) can be handled.

To this end, one calculates that
\begin{eqnarray}
   \frac{1}{\epsilon}\partial_k Q^\epsilon: \partial_k \mathcal{J}(Q)&= &\frac{1}{\epsilon}\left[-b\lambda_1^\epsilon |\nabla Q^\epsilon|^2-b \partial_k (Q^\epsilon)^2 :\partial_k Q^\epsilon +\frac{c}{2}|\nabla |Q^\epsilon|^2|^2\right] \nonumber\\
   && +\frac{1}{\epsilon}\left( -a +b\lambda_1^\epsilon +c|Q^\epsilon|^2\right)|\nabla Q^\epsilon|^2:=J_1+J_2, \label{j1j2}
\end{eqnarray}
where $\lambda_1^\epsilon$ is the smallest eigenvalue of $Q^\epsilon$. Then our key observation here is that $J_1$ has the favorable sign and the integral involving $J_2$ can be bounded uniformly, which will be derived in \textbf{Step 3} and \textbf{Step 4} respectively.

\textbf{Step 3. we will show that $\bm{J_1}$ is nonnegative, i.e.}
  \begin{equation}\label{J1}
    \bm{J_1(x) \geq 0} \text{ \textbf{for all} } \bm{x\in B_{3r_0}(x_0)}.
  \end{equation}
To prove this, we will examine the structure of $Q^\epsilon$ in details.
Let $\lambda_1^\epsilon, \lambda_2^\epsilon$ and $\lambda_3^\epsilon$ be the eigenvalues of $Q^\epsilon$ with the corresponding eigenvectors $d_1^\epsilon, d_2^\epsilon$ and $d_3^\epsilon \in \mathbb{S}^2$ respectively. Then, Since $Q^\epsilon$ is symmetric and with zero trace,  $Q^\epsilon$ can be represented as
\begin{equation}\label{Qepsilon}
  Q^\epsilon=\lambda_1^\epsilon d_1^\epsilon\otimes d_1^\epsilon+\lambda_2^\epsilon d_2^\epsilon\otimes d_2^\epsilon+\lambda_3^\epsilon d_3^\epsilon\otimes d_3^\epsilon, \quad \lambda_1^\epsilon+\lambda_2^\epsilon+\lambda_3^\epsilon=0.
\end{equation}
Without loss of generality, it can be assumed that
$$\lambda_1^\epsilon\leq\lambda_2^\epsilon\leq\lambda_3^\epsilon.$$
Set
\begin{equation}\label{DD}
  D^\epsilon_{12}=\sum_{k=1}^3|d_1^\epsilon\cdot\partial_kd_2^\epsilon  |^2,\quad D_{13}^\epsilon =\sum_{k=1}^3|d_1^\epsilon\cdot\partial_k d_3^\epsilon  |^2, \quad D_{23}^\epsilon =\sum_{k=1}^3|d_2^\epsilon\cdot\partial_k d_3^\epsilon  |^2.
\end{equation}
Since $d_1^\epsilon,d_2^\epsilon$ and $d_3^\epsilon$ form an orthonormal basis to $\mathbb{R}^3$, it holds that
\begin{equation}\label{DDD}
|\nabla d_1^\epsilon|^2 =D_{12}^\epsilon+D_{13}^\epsilon,\quad |\nabla d_2^\epsilon|^2 =D_{12}^\epsilon+D_{23}^\epsilon, \quad |\nabla d_3^\epsilon|^2 =D_{13}^\epsilon+D_{23}^\epsilon.
\end{equation}
It follows from the structure of $Q^\epsilon$ ((\ref{Qepsilon})), (\ref{DD}), (\ref{DDD}) and detailed calculations that
\begin{eqnarray}
   |\nabla Q^\epsilon|^2 &=&  |\nabla \lambda_1^\epsilon|^2 +|\nabla\lambda_2^\epsilon|^2+|\nabla\lambda_3^\epsilon|^2 +2(\lambda_1^\epsilon)^2 |\nabla d_1^\epsilon|^2+2(\lambda_2^\epsilon)^2 |\nabla d_2^\epsilon|^2+2(\lambda_3^\epsilon)^2 |\nabla d_3^\epsilon|^2 \nonumber\\
   &&-4 \lambda_1^\epsilon\lambda_2^\epsilon D_{12}^\epsilon -4 \lambda_1^\epsilon\lambda_3^\epsilon D_{13}^\epsilon-4 \lambda_2^\epsilon\lambda_3^\epsilon D_{23}^\epsilon\nonumber\\
   &=& |\nabla \lambda_1^\epsilon|^2 +|\nabla\lambda_2^\epsilon|^2+|\nabla\lambda_3^\epsilon|^2 +2(\lambda_3^\epsilon-\lambda_2^\epsilon)^2 |\nabla d_3^\epsilon|^2 + E_1^\epsilon,\label{tiduQ}
\end{eqnarray}
where
$$E_1^\epsilon =6\lambda_3^\epsilon (\lambda_2^\epsilon-\lambda_1^\epsilon)D_{13}^\epsilon+2(\lambda_2^\epsilon -\lambda_1^\epsilon)^2D_{12}^\epsilon \geq 0.$$
Meanwhile, it holds that
$$(Q^\epsilon)^2=(\lambda_1^\epsilon)^2 d_1^\epsilon \otimes d_1^\epsilon +(\lambda_2^\epsilon)^2 d_2^\epsilon \otimes d_2^\epsilon +(\lambda_3^\epsilon)^2 d_3^\epsilon \otimes d_3^\epsilon.$$
Then, a detailed calculation using (\ref{Qepsilon})-(\ref{DDD}) yields
\begin{eqnarray*}
   \partial_k (Q^\epsilon)^2 : \partial _k Q^\epsilon& =& 2\lambda_1^\epsilon|\nabla \lambda_1^\epsilon|^2 +2\lambda_2^\epsilon|\nabla \lambda_2^\epsilon|^2+2\lambda_3^\epsilon|\nabla \lambda_3^\epsilon|^2 +2(\lambda_1^\epsilon)^3 |\nabla d_1^\epsilon|^2  \nonumber\\
   && + 2(\lambda_2^\epsilon)^3 |\nabla d_2^\epsilon|^2+2(\lambda_3^\epsilon)^3 |\nabla d_3^\epsilon|^2 -2[(\lambda_1^\epsilon)^2 \lambda_2 +(\lambda_2^\epsilon)^2 \lambda_1^\epsilon]D_{12}^\epsilon \nonumber\\
   &&-2[(\lambda_1^\epsilon)^2 \lambda_3 +(\lambda_3^\epsilon)^2 \lambda_1^\epsilon]D_{13}^\epsilon-2[(\lambda_2^\epsilon)^2 \lambda_3 +(\lambda_3^\epsilon)^2 \lambda_2^\epsilon]D_{23}^\epsilon.
\end{eqnarray*}
Due to (\ref{Qepsilon}) and (\ref{DDD}) again, $\partial_k (Q^\epsilon)^2 : \partial _k Q^\epsilon$ can be rewritten as
\begin{equation}\label{bb}
 \partial_k (Q^\epsilon)^2 : \partial _k Q^\epsilon = 2\lambda_1^\epsilon|\nabla \lambda_1^\epsilon|^2 +2\lambda_2^\epsilon|\nabla \lambda_2^\epsilon|^2+2\lambda_3^\epsilon|\nabla \lambda_3^\epsilon|^2 -2\lambda_1^\epsilon(\lambda_3^\epsilon-\lambda_2^\epsilon)^2|\nabla d_3^\epsilon|^2  +E^\epsilon_2,
\end{equation}
where
$$E_2^\epsilon=2(\lambda_1^\epsilon +\lambda_2^\epsilon)(\lambda_1^\epsilon -\lambda_2^\epsilon)^2 D_{12}^\epsilon + 2(\lambda_1^\epsilon-\lambda_2^\epsilon)[(\lambda_1^\epsilon)^2+(\lambda_2^\epsilon)^2 +\lambda_1^\epsilon\lambda_2^\epsilon]D_{13}^\epsilon \leq 0.$$
It follows from \textbf{Step 1} that $\lambda_1^\epsilon<0, \lambda_2^\epsilon<0$ and $\lambda_3^\epsilon>0$ in $B_{3r_0}(x_0)$ for suitable small $\delta_0$. This and  $\lambda_1^\epsilon\leq\lambda_2^\epsilon\leq\lambda_3^\epsilon$ imply that
\begin{equation}\label{zhong-E3}
  -b\lambda_1^\epsilon|\nabla Q^\epsilon|^2 -b\partial_k (Q^\epsilon)^2: \partial_k Q^\epsilon +\frac12 c|\nabla |Q^\epsilon|^2|^2\geq E^\epsilon_3,
\end{equation}
where
$$E_3^\epsilon=-b\lambda_1^\epsilon(|\nabla \lambda_1^\epsilon|^2+|\nabla \lambda_2^\epsilon|^2+|\nabla \lambda_3^\epsilon|^2)-b(2\lambda_1^\epsilon|\nabla \lambda_1^\epsilon|^2 +2\lambda_2^\epsilon|\nabla \lambda_2^\epsilon|^2+2\lambda_3^\epsilon|\nabla \lambda_3^\epsilon|^2)+\frac{c}{2}|\nabla |Q^\epsilon|^2|^2.$$
Since $\lambda_1^\epsilon +\lambda_2^\epsilon+\lambda_3^\epsilon=0$, one has
\begin{eqnarray*}
  &&|\nabla |Q^\epsilon|^2|^2 =4|2\lambda_1^\epsilon \nabla \lambda_1^\epsilon +2\lambda_2^\epsilon \nabla \lambda_2^\epsilon +\lambda_2^\epsilon\nabla \lambda_1^\epsilon +\lambda_1^\epsilon \nabla \lambda_2^\epsilon|^2 \\
  &&\qquad =4(2\lambda_1^\epsilon +\lambda_2^\epsilon)^2|\nabla \lambda_1^\epsilon|^2 + 8(2\lambda_1^\epsilon +\lambda_2^\epsilon)(2\lambda_2^\epsilon +\lambda_1^\epsilon)\nabla \lambda_1^\epsilon\cdot \nabla \lambda_2^\epsilon +4(2\lambda_2^\epsilon +\lambda_1^\epsilon)^2|\nabla \lambda_2^\epsilon|^2.
\end{eqnarray*}
On the other hand,
\begin{eqnarray*}
   && -b[ \lambda_1^\epsilon(|\nabla \lambda_1^\epsilon|^2+|\nabla \lambda_2^\epsilon|^2+|\nabla \lambda_3^\epsilon|^2)+(2\lambda_1^\epsilon|\nabla \lambda_1^\epsilon|^2 +2\lambda_2^\epsilon|\nabla \lambda_2^\epsilon|^2+2\lambda_3^\epsilon|\nabla \lambda_3^\epsilon|^2)]\\
   &=& -b[(2\lambda_1^\epsilon-2\lambda_2^\epsilon)|\nabla \lambda_1^\epsilon|^2 -2(\lambda_1^\epsilon+2\lambda_2^\epsilon)\nabla \lambda_1^\epsilon\cdot \nabla \lambda_2^\epsilon]\geq 2b(\lambda_1^\epsilon+2\lambda_2^\epsilon)\nabla \lambda_1^\epsilon\cdot \nabla \lambda_2^\epsilon.
\end{eqnarray*}
Therefore, these together with condition \textbf{(III)} imply that
\begin{eqnarray}
  E_3^\epsilon &\geq& 2c (2\lambda_1^\epsilon +\lambda_2^\epsilon)^2|\nabla \lambda_1^\epsilon|^2 +2c(2\lambda_2^\epsilon +\lambda_1^\epsilon)^2|\nabla \lambda_2^\epsilon|^2 \nonumber\\
   &&+ 2\left[2c(2\lambda_1^\epsilon +\lambda_2^\epsilon)+b\right](2\lambda_2^\epsilon+\lambda_1^\epsilon) \nabla \lambda_1^\epsilon\cdot \nabla \lambda_2^\epsilon\geq 0 \label{zhong-E33}
\end{eqnarray}
for sufficiently small $\delta_0$, where one has used the expression of $s_+$ and the fact that
$$\text{dist}(Q^\epsilon(x),\mathcal{N})^2=(\lambda_1^\epsilon(x)+\frac{s_+}{3})^2+(\lambda_2^\epsilon(x)+\frac{s_+}{3})^2 +(\lambda_3^\epsilon(x)-\frac{2s_+}{3})^2 \leq \delta_0^\frac14, \, x\in B_{3r_0}(x_0).$$
Substituting the estimate (\ref{zhong-E33}) into (\ref{zhong-E3}) yeilds
\begin{equation}\label{Key-L1}
  -b\lambda_1^\epsilon|\nabla Q^\epsilon|^2 -b\partial_k (Q^\epsilon)^2: \partial_k Q^\epsilon +\frac{c}{2}|\nabla |Q^\epsilon|^2|^2\geq 0 \text{ in } B_{3r_0}(x_0)
\end{equation}
for sufficiently small $\delta_0$, which proves the desired estimate (\ref{J1}). \\

\textbf{Step 4. We now estimate the integral involving $\bm{J_2}$ in (\ref{j1j2}) and establish finally the uniform estimate on $\bm{||\Delta Q^\epsilon||_{L^2(B_{r^\epsilon_3}(x_0))} }$.} Set
$$g(\lambda_1^\epsilon, \lambda_3^\epsilon) =-a +b\lambda_1^\epsilon +c|Q^\epsilon|^2=-a +b\lambda_1^\epsilon +2c[(\lambda_1^\epsilon)^2 +(\lambda_3^\epsilon)^2 +\lambda_1^\epsilon\lambda_3^\epsilon].$$
It follows from the definition of $s_+$ that $g(-\frac{s_+}{3}, \frac{2s_+}{3})=0$. Then, it holds that
\begin{equation*}
g(\lambda_1^\epsilon, \lambda_3^\epsilon)=b\left(\lambda_1^\epsilon +\frac{s_+}{3}\right) +2cs_+\left(\lambda_3^\epsilon-\frac{2s_+}{3}\right) +o\left(\sqrt{|\lambda_1^\epsilon +\frac{s_+}{3}|^2 +|\lambda_3^\epsilon-\frac{2s_+}{3}|^2}\right)
\end{equation*}
near the point $(-\frac{s_+}{3}, \frac{2s_+}{3})$. Note that
$$\text{dist}(Q^\epsilon, \mathcal{N})^2=(\lambda^\epsilon_1+\frac{s_+}{3})^2+(\lambda^\epsilon_2+\frac{s_+}{3})^2+(\lambda^\epsilon_3-\frac{2s_+}{3})^2.$$
Thus, for suitably small $\delta_0$, one has that
\begin{equation*}
|g(\lambda_1^\epsilon, \lambda_3^\epsilon)|\leq C\sqrt{|\lambda_1^\epsilon +\frac{s_+}{3}|^2 +|\lambda_3^\epsilon-\frac{2s_+}{3}|^2} \leq C \text{dist}(Q^\epsilon, \mathcal{N})\leq  C |\mathcal{J}(Q^\epsilon)|,
\end{equation*}
where Lemma \ref{le-eq} and the estimate (\ref{le-new}) have been used. Hence
\begin{equation}\label{Key-L2}
  |J_2|\leq \frac{C}{\epsilon}|\mathcal{J}(Q^\epsilon)||\nabla Q^\epsilon|^2 \text{ in } B_{3r_0}(x_0).
\end{equation}
It then follows from (\ref{Key-L3}), (\ref{l1-p5})-(\ref{J1}), and (\ref{Key-L1})-(\ref{Key-L2}) that
\begin{eqnarray*}
   &&\int_{B_{r_3^\epsilon}(x_0)}\left( L_1^2|\Delta Q^\epsilon|^2  + \left|\frac{\mathcal{J}(Q^\epsilon)}{\epsilon}\right|^2 \right)dx\\
   &\leq&\int_{B_{r_3^\epsilon}(x_0)}\left( |H^\epsilon|^2 + \frac{C}{\epsilon}|\mathcal{J}(Q^\epsilon)||\nabla Q^\epsilon|^2\right)dx +C_0\\
  &\leq&  \int_{B_{r_3^\epsilon}(x_0)}\left( |H^\epsilon|^2 + \frac12\left|\frac{\mathcal{J}(Q^\epsilon)}{\epsilon} \right|^2+ C_0 |\nabla Q^\epsilon|^4\right)dx +C_0.
\end{eqnarray*}
Meanwhile, by Ladyzhenskaya's inequality, one has
$$\int_{B_{r_3^\epsilon}(x_0)} |\nabla Q^\epsilon|^4 dx \leq C_0 \int_{B_{r_3^\epsilon}(x_0)} |\nabla Q^\epsilon|^2 dx \int_{B_{r_3^\epsilon}(x_0)}(|\Delta Q^\epsilon|^2 + |\nabla Q^\epsilon|^2)dx.$$
Since $||\nabla Q^\epsilon||_{L^2(B_{4r_0}(x_0))}^2 <\delta_0$ and $\delta_0$ can be chosen very small, one can get
\begin{equation}\label{Key}
  \int_{B_{r_3^\epsilon}(x_0)}\left( |\Delta Q^\epsilon|^2  + \left|\frac{\mathcal{J}(Q^\epsilon)}{\epsilon}\right|^2 \right)dx \leq C_4,
\end{equation}
where $C_4$ is independent of $\epsilon$. Therefore, there exists a subsequence of $\{Q^\epsilon\}_{\epsilon>0}$ such that
\begin{equation}\label{LStrong1}
  Q^{\epsilon}\rightarrow Q^*= s_+(d^*\otimes d^*-\frac13 \mathbb{I}) \text{ in } H^1(B_{r_0}(x_0)), d^*\in \mathbb{S}^2,
\end{equation}
\begin{equation}\label{LStrong2}
  \frac{\mathcal{J}(Q^{\epsilon})}{\epsilon} \rightharpoonup \mathcal{J}^*\text{ in } L^2(B_{r_0}(x_0)),
\end{equation}
as $ \epsilon\rightarrow 0$. Furthermore, by Step 1 and lemma \ref{le-eq}, one can get
\begin{eqnarray}
  \int_{B_{r_0}(x_0)} \frac{\hat{F}_b(Q^\epsilon)}{\epsilon}dx &\leq& C \int_{B_{r_0}(x_0)} |Q^\epsilon-Q^*|\left|\frac{\mathcal{J}(Q^\epsilon)}{\epsilon}\right|dx  \nonumber\\
  &\leq & C ||Q^\epsilon-Q^*||_{L^2(B_{r_0}(x_0))}\left\|\frac{\mathcal{J}(Q^\epsilon)}{\epsilon}\right\|_{L^2(B_{r_0}(x_0))} \rightarrow 0 \label{LStrong3}
\end{eqnarray}
as $ \epsilon\rightarrow 0$.

\textbf{Step 5. Finally, we show that}
\begin{equation}\label{daJ}
  \bm{\mathcal{J}^*\in (T_{Q^*}\mathcal{N})_{\mathcal{Q}_0}^{\bot} \text{ \textbf{a.e. in} } B_{r_0}(x_0).}
\end{equation}
Recalling (\ref{tiduQ}) in \textbf{Step 3}, one has that
\begin{eqnarray*}
   |\nabla Q^\epsilon|^2 &=& |\nabla \lambda_1^\epsilon|^2 +|\nabla\lambda_2^\epsilon|^2+|\nabla\lambda_3^\epsilon|^2 +2(\lambda_3^\epsilon-\lambda_2^\epsilon)^2 |\nabla d_3^\epsilon|^2\\
   && +6\lambda_3^\epsilon (\lambda_2^\epsilon-\lambda_1^\epsilon)D_{13}^\epsilon+2(\lambda_2^\epsilon -\lambda_1^\epsilon)^2D_{12}^\epsilon  \geq 2(\lambda_3^\epsilon-\lambda_2^\epsilon)^2 |\nabla d_3^\epsilon|^2.
\end{eqnarray*}
On the other hand, the claim in \textbf{Step 1} implies that
$|\lambda_3^\epsilon -\lambda_2^\epsilon|\geq \frac12 s_+$ for all $x\in B_{3r_0}(x_0)$. Therefore $|\nabla d_3^\epsilon|^2 \leq \frac{2}{s_+^2}|\nabla Q^\epsilon|^2$ holds, which yields that
\begin{equation}\label{d3Q}
  ||\nabla d^\epsilon_3||_{L^p(B_{r_0}(x_0))} \leq C ||\nabla Q^\epsilon||_{L^p(B_{r_0}(x_0))},\quad 1\leq p<\infty.
\end{equation}
Note also that (\ref{Key}) in \textbf{Step 4} implies $||Q^\epsilon||_{H^2(B_{r_0}(x_0))} \leq C$. Thus
$$||\nabla Q^\epsilon||_{L^p(B_{r_0}(x_0))}\leq C, \quad 1\leq p<\infty$$
by the embedding theorem, which together with (\ref{d3Q}), yields that
\begin{equation}\label{dstrongC}
  d_3^\epsilon \rightarrow d^* \text{ in } C^\alpha(B_{r_0}(x_0)), 0<\alpha<1, \text{ as } \epsilon \rightarrow 0.
\end{equation}
Set
$$e_1^\epsilon=\frac{1}{\sqrt{2}}(d^\epsilon_3\otimes d_2^\epsilon+d_2^\epsilon\otimes d^\epsilon_3), e_2^\epsilon=\frac{1}{\sqrt{2}}(d^\epsilon_3\otimes d_1^\epsilon+d_1^\epsilon\otimes d^\epsilon_3),e_3^\epsilon=\frac{1}{\sqrt{2}}(d_2^\epsilon\otimes d_1^\epsilon+d_1^\epsilon\otimes d_2^\epsilon),$$
$$ e_4^\epsilon=\frac{1}{\sqrt{2}}(d_1^\epsilon\otimes d_1^\epsilon-d_2^\epsilon\otimes d_2^\epsilon),e_5^\epsilon=\sqrt{6} (\frac12 d_1^\epsilon\otimes d_1^\epsilon +\frac12 d_2^\epsilon\otimes d_2^\epsilon-\frac13 \mathbb{I}),$$
where $d_1^\epsilon,d_2^\epsilon\in V_{d^\epsilon_3}$(see Lemma \ref{le1} for the definition of $V_{d^\epsilon_3}$) and $d_1^\epsilon\cdot d_2^\epsilon=0$. Similarly, $e_1^*$ and $e_2^*$ can also be defined in a same way as $e^\epsilon_1$ and $e^\epsilon_2$ with $d^*, d^*_1,d^*_2$, where $d^*$ is given in (\ref{LStrong1}) and $d^*_1,d^*_2\in V_{d^*}, d^*_1\cdot d^*_2=0$. Obviously, $(T_{\mathcal{P}_{\mathcal{N}}(Q^\epsilon)}\mathcal{N})_{\mathcal{Q}_0}^{\bot}=Span\{e_3^\epsilon,e_4^\epsilon,e_5^\epsilon\}$.
Lemma \ref{le1} yields that
$$\mathcal{J}(Q^{\epsilon}) =a_1^\epsilon e_3^\epsilon+a_2^\epsilon e_4^\epsilon+a_3^\epsilon e_5^\epsilon,$$
where $a_1^\epsilon,a_2^\epsilon,a_3^\epsilon\in L^2(B_{r_0}(x_0))$.
Then, by taking the limit $\epsilon\rightarrow 0^+$ and (\ref{LStrong2}) and (\ref{dstrongC}), one can get
$$\frac{\mathcal{J}(Q^{\epsilon})}{\epsilon}:( d^\epsilon_3\otimes d_2^* +d_2^*\otimes d^\epsilon_3)=-\frac{2\sqrt{6}}{3}a_3^\epsilon d^\epsilon_3\cdot d_2^*  \rightharpoonup \mathcal{J}^*:e_1^*=0 \text{ in } L^2(B_{r_0}(x_0)),$$
$$\frac{\mathcal{J}(Q^{\epsilon})}{\epsilon}:( d^\epsilon_3\otimes d_1^* +d_1^*\otimes d^\epsilon_3)=-\frac{2\sqrt{6}}{3}a_3^\epsilon d^\epsilon_3\cdot d_1^* \rightharpoonup \mathcal{J}^*:e_2^*=0 \text{ in } L^2(B_{r_0}(x_0)).$$
This implies (\ref{daJ}) by Lemma \ref{le1}.


\end{proof}

\section{Proof of main results}

Proof of Theorem \ref{MT} \textbf{Step 1. Convergence of $\bm{(v^\epsilon, Q^\epsilon)}$.}
It follows from the energy inequality (\ref{Q-energy}) that there exists a subsequence of $\{(v^\epsilon,Q^\epsilon)\}_{\epsilon>0}$ such that
\begin{equation}\label{con1}
  v^\epsilon\mathop{\rightharpoonup}\limits^{\star} v^* \text{ in } L^\infty(0,T;L^2(\mathbb{R}^2)),\quad v^\epsilon\rightharpoonup v^* \text{ in } L^2(0,T; H^1(\mathbb{R}^2)),
\end{equation}
\begin{equation}\label{con2}
  \nabla Q^\epsilon \mathop{\rightharpoonup}\limits^{\star} \nabla Q^* \text{ in } L^\infty(0,T; L^2(\mathbb{R}^2)), \quad H^\epsilon \rightharpoonup H^* \text{ in } L^2(0,T; L^2(\mathbb{R}^2)),
\end{equation}
where $Q^*=s_+(d^*\otimes d^* -\frac13 \mathbb{I})$ a.e. in $\mathbb{R}^2$ and $d^*: (0,T)\times \mathbb{R}^2 \mapsto \mathbb{S}^2$.

For any smooth bounded domain $\tilde{\Omega}\subset \mathbb{R}^2$ and any $\psi\in L^2H^3_0((0,T)\times\tilde{\Omega},\mathbb{R}^3), \partial_1 \psi_1 +\partial_2\psi_2=0$, one can derive from (\ref{Qw1}), H\"{o}lder inequalities, and the Sobolev embedding theorem that
\begin{eqnarray}
  &&|<v_t^\epsilon , \psi>|\nonumber\\
   &\leq& \left| \int_0^T\int_{\mathbb{R}^2}(\eta \overline{D^\epsilon}:\overline{\nabla \psi} -v^\epsilon\otimes v^\epsilon:\overline{\nabla \psi}  -L_1\nabla Q^\epsilon \odot \nabla Q^\epsilon:\underline{\nabla \psi}  )dxdt\right|\nonumber \\
   &&+\left| \int_0^T\int_{\mathbb{R}^2}[Q^\epsilon\cdot H^\epsilon-H^\epsilon \cdot Q^\epsilon-S_{Q^\epsilon}(H^\epsilon)]:\overline{\nabla \psi} dxdt\right|\nonumber \\
   &\leq&C\left(||D^\epsilon||_{L^2}||\nabla \psi||_{L^2} +||v^\epsilon||_{L^\infty_tL^2_x}||\nabla v^\epsilon||_{L^2}|| \psi||_{L^2_tL_x^\infty} +||\nabla Q^\epsilon||_{L^\infty_tL^2_x}^2 ||\nabla \psi||_{L^2_tL^\infty_x} \right)\nonumber\\
   &&+C||H^\epsilon||_{L^2} (|| Q^\epsilon||_{L^\infty_tL^4_x(\tilde{\Omega})}^2+|| Q^\epsilon||_{L^\infty_tL^2_x(\tilde{\Omega})})||\nabla \psi||_{L^2_tL^\infty_x} +C||H^\epsilon||_{L^2}||\nabla \phi||_{L^2} \nonumber\\
   &\leq &C||\psi||_{L^2_tH^3_x} \label{v*t}
\end{eqnarray}
with $\overline{D^\epsilon}$ given in (\ref{X-1}). Therefore, $v_t^\epsilon$ is uniformly bounded in $L^2(0,T; H^{-3}(\tilde{\Omega}))$.
Similarly, for any $\varphi\in L^4((0,T)\times\tilde{\Omega},\mathcal{Q}_0)$, one can get from (\ref{Qw2}) that
\begin{eqnarray}
   && |<Q^\epsilon_t , \varphi>| \nonumber \\
   &=& \left |  \int_0^T\int_{\mathbb{R}^2} [-\underline{v^\epsilon} \cdot \nabla Q^\epsilon + \frac{1}{\Gamma} H^\epsilon +S_{Q^\epsilon}(\overline{D^\epsilon})+\overline{\Lambda^\epsilon} \cdot Q^\epsilon-Q^\epsilon\cdot \overline{\Lambda^\epsilon}]:\varphi dxdt\right | \nonumber\\
  &\leq & C(||v^\epsilon||_{L^4(\tilde{\Omega})}||\nabla Q^\epsilon||_{L^2}||\varphi||_{L^4} +||H^\epsilon||_{L^2}||\varphi||_{L^2}+||D^\epsilon||_{L^2}|| \varphi||_{L^2})\nonumber\\
  &&+C||D^\epsilon||_{L^2} (|| Q^\epsilon||_{L^8(\tilde{\Omega})}^2+|| Q^\epsilon||_{L^4(\tilde{\Omega})})||\varphi||_{L^4} \leq C||\varphi||_{L^4}, \label{Q*t}
\end{eqnarray}
where $\underline{v^\epsilon}$, $\overline{D^\epsilon}$ and $\overline{\Lambda^\epsilon}$ are given in (\ref{X-1}). This implies that $Q_t^\epsilon$ is uniformly bounded in $L^{\frac43}((0,T)\times\tilde{\Omega})$ for any smooth bounded domain $\tilde{\Omega}\subset \mathbb{R}^2$.
Then, it follows from the energy inequality (\ref{Q-energy}) and Aubin-Lious Lemma that there exists a subsequence of $\{(v^\epsilon,Q^\epsilon)\}_{\epsilon>0}$ such that
\begin{equation}\label{con4}
  v^\epsilon \rightarrow v^* \text{  in  } L^p(0,T; L^p(\tilde{\Omega})),\quad 2\leq p<4,
\end{equation}
\begin{equation}\label{con5}
  Q^\epsilon \rightarrow Q^* \text{  in  } L^q(0,T;L^q(\tilde{\Omega})), \quad 1\leq q<\infty,
\end{equation}
where $\tilde{\Omega}$ is any smooth bounded domain in $\mathbb{R}^2$.



Then taking the limiting $\epsilon \rightarrow 0^+$ in the equality (\ref{Qw2}) for $(v^\epsilon, Q^\epsilon)$ yields:
\begin{eqnarray}
   && \int_0^T\int_{\mathbb{R}^2} [-Q^*:\varphi_t -(\underline{v^*}\cdot \nabla \varphi): Q^* -\frac{1}{\Gamma}H^*: \varphi  ]dxdt\nonumber\\
&+&\int_0^T\int_{\mathbb{R}^2} [Q^*\cdot \overline{\Lambda^*}-\overline{\Lambda^*}\cdot Q^*-S_{Q^*}(\overline{D^*})]:\varphi dxdt= \int_{\mathbb{R}^2} Q_0^*(x):\varphi(0,x) dx,  \label{Qstar2}
\end{eqnarray}
where $\overline{D^*},\,\overline{\Lambda^*},\, \underline{v^*}$ are given in (\ref{X-2}), $\varphi\in C^\infty_0((0,T)\times\mathbb{R}^2,\mathcal{Q}_0)$, and $S_{Q^*}(D^*)$ is given in (\ref{SQ-HD}). Note that condition (\ref{QSQ}) implies
$$\lim_{\epsilon\rightarrow 0^+}\int_{\mathbb{R}^2} Q_0^\epsilon:\varphi(0,x) dx =\int_{\mathbb{R}^2} Q_0^*:\varphi(0,x) dx, \quad \lim_{\epsilon\rightarrow 0^+}\int_{\mathbb{R}^2}v_0^\epsilon \cdot \psi(0,x) dx = \int_{\mathbb{R}^2}v_0^*\cdot \psi(0,x) dx.$$
Next, we show that one can pass limit in (\ref{Qw1}) to obtain
\begin{eqnarray}
    &&\int_0^T\int_{\mathbb{R}^2} [-v^*\cdot \psi_t -(v^*\cdot \overline{\nabla\psi})\cdot v^*+\eta \overline{D^*}:\overline{\nabla\psi}+ L_1\nabla Q^*\odot \nabla Q^*:\underline{\nabla \psi}] dxdt \nonumber \\
  &+&\int_0^T\int_{\mathbb{R}^2}[Q^* \cdot H^*-H^*\cdot Q^*-S_{Q^*}(H^*)]:\overline{\nabla \psi }dxdt = \int_{\mathbb{R}^2}v_0^*(x)\cdot \psi(0,x) dx,\label{Qstar1}
\end{eqnarray}
where $S_{Q^*}(H^*)$ is defined by (\ref{SQ-HD}). To this end, due to (\ref{Qw1}) for $(v^\epsilon,Q^\epsilon)$ and (\ref{con1})-(\ref{con5}), it suffices to show only that there exists a subsequence of $\{(v^\epsilon,Q^\epsilon)\}_{\epsilon>0}$ such that
\begin{equation}\label{claim0}
  \int_0^T\int_{\mathbb{R}^2}\nabla Q^\epsilon\odot \nabla Q^\epsilon:\underline{\nabla \psi} dxdt \rightarrow \int_0^T\int_{\mathbb{R}^2}\nabla Q^*\odot \nabla Q^*:\underline{\nabla \psi } dxdt,  \text{ as } \epsilon \rightarrow 0.
\end{equation}
Note that the energy inequality (\ref{Q-energy}) for $(v^\epsilon,Q^\epsilon)$ implies that there exists a subsequence of $\{(v^\epsilon,Q^\epsilon)\}_{\epsilon>0}$ such that
\begin{equation*}
  \nabla Q^\epsilon\odot \nabla Q^\epsilon:\underline{\nabla \psi} dxdt \rightharpoonup  \nabla Q^*\odot \nabla Q^*:\underline{\nabla \psi} dxdt +\kappa, \text{ as } \epsilon \rightarrow 0
\end{equation*}
for a possibly non-vanishing measure $\kappa$. Our aim is to show that there exists a subsequence of $\{(v^\epsilon,Q^\epsilon)\}_{\epsilon>0}$ such that $\kappa=0$, which implies (\ref{claim0}).
To this end, we observe that
$$\int_0^T \liminf_{\epsilon\rightarrow 0^+} \int_{\mathbb{R}^2} |H^\epsilon|^2dxdt\leq E_0$$
by the energy inequality (\ref{Q-energy}) and Fatou's lemma.
Set
$$\mathcal{L}_\infty=\left\{t: \liminf_{\epsilon\rightarrow 0^+}\int_{\mathbb{R}^2} |H^\epsilon(t,\cdot)|^2dx < \infty \right \}.$$
Then, it holds that
$$|\mathcal{L}_\infty|=T.$$
Meanwhile, we claim that there exists a set $\mathcal{A}_\infty\subset (0,T)$ such that $|\mathcal{A}_\infty|=T$ and
\begin{equation}\label{claimwired}
  \nabla Q^\epsilon(t) \rightharpoonup \nabla Q^*(t) \text{ in } L^2(\mathbb{R}^2)
\end{equation}
for any $t\in \mathcal{A}_\infty$. To prove this claim, one notes that (\ref{con2}) implies
\begin{equation}\label{add1}
  \lim_{\epsilon\rightarrow 0^+}\int_0^T \int_{\mathbb{R}^2} \partial _i Q^\epsilon : \hat{\varphi}(x) \hat{\phi}(t)dxdt = \int_0^T \int_{\mathbb{R}^2} \partial _i Q^* : \hat{\varphi}(x) \hat{\phi}(t)dxdt, \quad i=1,2,
\end{equation}
for any $\hat{\varphi}\in C_0^\infty(\mathbb{R}^2,\mathbb{M}^{3\times 3})$ and any $\hat{\phi}\in C_0^\infty((0,T))$. Since $||\nabla Q^\epsilon||_{L^\infty_tL^2_x}$ is uniformly bounded, (\ref{add1}) holds true for $\hat{\varphi} \in L^2(\mathbb{R}^2,\mathbb{M}^{3\times 3})$ and $\hat{\phi}\in L^1((0,T))$. For fixed $\hat{\varphi} \in L^2(\mathbb{R}^2,\mathbb{M}^{3\times 3})$, define
$$g_i^\epsilon(t)=\int_{\mathbb{R}^2} \partial_i Q^\epsilon(t,x) :\hat{\varphi}(x)dx \text{  and  } g_i^*(t)=\lim_{\epsilon\rightarrow 0^+} g_i^\epsilon (t), \, i=1,2.$$
Note that $g^*_1$ and $g^*_2$ can also be regarded as the weak limits of $g^\epsilon_1$ and $g^\epsilon_2$ in $L^p((0,T)), 1<p<\infty$ ($||g^\epsilon_1||_{L^p((0,T))}$ and $||g^\epsilon_2||_{L^p((0,T))}$ are uniformly bounded due to the uniformly bound for $||\nabla Q^\epsilon||_{L^\infty_tL^2_x}$), hence $g^*_1$ and $g^*_2$ are measurable. Then, taking $\hat{\phi}\equiv 1$ into (\ref{add1}) yields that
$$\lim_{\epsilon\rightarrow 0^+} \int_0^T g_i^\epsilon(t) dt =\int_0^T \lim_{\epsilon\rightarrow 0^+} g^\epsilon _i(t)dt = \int_0^T g_i^*(t) dt<\infty,\, i=1,2,$$
due to the Lebesgue dominated convergence theorem and the energy inequality (\ref{Q-energy}). Next, we prove that there exists a set $\mathcal{A}_0\subset (0,T)$ such that $|\mathcal{A}_0|=T$ and
\begin{equation}\label{add2}
  g^*_i(t)=\lim_{\epsilon \rightarrow 0^+}\int_{\mathbb{R}^2} \partial_i Q^\epsilon(t,x) :\hat{\varphi}(x)dx=\int_{\mathbb{R}^2} \partial_i Q^*(t,x):\hat{\varphi}(x)dx
\end{equation}
for any $t\in \mathcal{A}_0$. Since $\nabla Q^*\in L^\infty_tL^2_x$, one can define the following measurable functions: $$\tilde{\delta}_i(t)=g^*_i(t)-\int_{\mathbb{R}^2} \partial_i Q^*(t,x):\hat{\varphi}(x)dx$$
and
\begin{equation*}
\hat{\phi}_i(t)=\left\{
                \begin{array}{ll}
                  1, & \hbox{$\tilde{\delta}_i(t) >0$;} \\
                  -1, & \hbox{$\tilde{\delta}_i(t) <0$;} \\
                  0, & \hbox{others,}
                \end{array}
              \right.
\end{equation*}
$i=1,2$. Note that $\hat{\phi}_i\in L^1((0,T)), i=1,2$. Then, these and (\ref{add1}) yield that
$$\int_0^T \left| g^*_i(t)-\int_{\mathbb{R}^2} \partial_i Q^*(t,x):\hat{\varphi}(x)dx\right|dt=0, \, i=1,2,$$
which implies (\ref{add2}). Since $L^2(\mathbb{R}^2)$ is separable, one can find a countable set $\{\hat{\varphi}_j\}_{j=1}^\infty$ such that $\overline{\{\hat{\varphi}_j\}_{j=1}^\infty}=L^2(\mathbb{R}^2)$. For every fixed $\hat{\varphi}_j\in \{\hat{\varphi}_j\}_{j=1}^\infty$, as in (\ref{add2}), one can find a set $\mathcal{A}_j \subset (0,T)$ such that $|\mathcal{A}_j|=T$ and
$$\lim_{\epsilon \rightarrow 0^+} \int_{\mathbb{R}^2} \partial_i Q^\epsilon(t,x) :\hat{\varphi}_j(x) dx =\int_{\mathbb{R}^2} \partial_i Q^*(t,x) :\hat{\varphi}_j(x) dx,\, i=1,2,$$
for any $t\in \mathcal{A}_j$. Then, let $\mathcal{A}_\infty=\cap _{j=1}^\infty \mathcal{A}_j$, one has that
$$\lim_{\epsilon \rightarrow 0^+} \int_{\mathbb{R}^2} \partial_i Q^\epsilon(t,x) :\hat{\varphi}(x) dx =\int_{\mathbb{R}^2} \partial_i Q^*(t,x) :\hat{\varphi}(x) dx,\, i=1,2,$$
for any $t\in \mathcal{A}_\infty$ and any $\hat{\varphi} \in L^2(\mathbb{R}^2,\mathbb{M}^{3\times 3})$. Hence (\ref{claimwired}) is proved.

Therefore, (\ref{claim0}) holds true provided that there exists a subsequence of $\{(v^\epsilon,Q^\epsilon)\}_{\epsilon>0}$ such that for $t\in \mathcal{L}_\infty\cap \mathcal{A}_\infty$,
\begin{equation}\label{claim1}
  \int_{\mathbb{R}^2} \nabla Q^\epsilon(t,\cdot) \odot\nabla Q^\epsilon(t,\cdot) :\underline{\nabla \psi} (t,\cdot)dx \rightarrow \int_{\mathbb{R}^2} \nabla Q^*(t,\cdot) \odot\nabla Q^*(t,\cdot) :\underline{\nabla \psi} (t,\cdot)dx
\end{equation}
as $\epsilon \rightarrow 0^+$. It should be noted that, for fixed $t\in \mathcal{L}_\infty\cap \mathcal{A}_\infty$, there exists a subsequence of $\{(v^\epsilon,Q^\epsilon)\}_{\epsilon>0}$ such that
$$\nabla Q^\epsilon(t,\cdot) \odot\nabla Q^\epsilon(t,\cdot) :\underline{\nabla \psi} (t,\cdot)dx \rightharpoonup \nabla Q^*(t,\cdot) \odot\nabla Q^*(t,\cdot) :\underline{\nabla \psi} (t,\cdot)dx +\kappa_1 \text{ as } \epsilon\rightarrow 0^+$$
for a possibly non-vanishing measure $\kappa_1$. For this subsequence $\{(v^\epsilon,Q^\epsilon)\}_{\epsilon>0}$, $\kappa_1=0$ if there exists a subsequence $\{(v^{\epsilon_j},Q^{\epsilon_j})\}_{j=1}^\infty$ of $\{(v^\epsilon,Q^\epsilon)\}_{\epsilon>0}$ such that
$$\int_{\mathbb{R}^2}\nabla Q^{\epsilon_j}(t,\cdot) \odot\nabla Q^{\epsilon_j}(t,\cdot) :\underline{\nabla \psi} (t,\cdot)dx \rightarrow \int_{\mathbb{R}^2}\nabla Q^*(t,\cdot) \odot\nabla Q^*(t,\cdot) :\underline{\nabla \psi} (t,\cdot)dx  \text{ as } \epsilon_j\rightarrow 0^+.$$

For each time $t_0\in \mathcal{L}_\infty\cap \mathcal{A}_\infty$, there exists a subsequence of $\{ H^{\epsilon}\}_{\epsilon>0}$ such that the $L^2$-norm of this subsequence is uniformly bounded, i.e. $||H^{\epsilon}||_{L^2(\mathbb{R}^2)} <\infty$. This and Lemma \ref{Lm} imply that the concentration point $y_0$ of $\nabla Q^{\epsilon} \odot \nabla Q^{\epsilon}$ at time $t_0$ must satisfy
\begin{equation}\label{y0}
  \liminf_{r\rightarrow0}\left\{\liminf _{\epsilon \rightarrow 0} \int_{B_r(y_0)}\left(|\nabla Q^{\epsilon}|^2 +\frac{\hat{F}_b(Q^\epsilon)}{\epsilon}\right)(t_0,x)dx\right\} \geq \delta_0.
\end{equation}
Then, this and the energy inequality (\ref{Q-energy}) imply that there exist at most finite points where $\nabla Q^{\epsilon} \odot \nabla Q^{\epsilon}$ may concentrate on and the set of these points is denoted as
\begin{eqnarray*}
 \mathcal{B}(t_0) &=& \bigcap_{r>0} \left\{ x: \liminf _{\epsilon \rightarrow 0} \int_{B_r(x)}\left(|\nabla Q^{\epsilon}|^2 +\frac{\hat{F}_b(Q^\epsilon)}{\epsilon}\right)(t_0,y)dy \geq \delta_0, t_0\in \mathcal{L}_\infty\cap \mathcal{A}_\infty \right\}\\
  &=&  \left\{x_1(t_0),x_2(t_0) , \cdots , x_{L(t_0)}(t_0): t_0\in \mathcal{L}_\infty\cap \mathcal{A}_\infty \right\},
\end{eqnarray*}
where $0\leq L(t_0)\leq[\frac{E_0}{\delta_0}] +1$ is an integer depending on $t_0$, and $E_0$ is given in (\ref{E01}).

Based on this fact, without loss of generality, one can assume that $\mathcal{B}(t)=\{\mathbf{0}\}, t\in \mathcal{L}_\infty\cap \mathcal{A}_\infty$, consists of a single point at the origin.
Since $\partial_1 \psi _1 +\partial_2\psi_2=0$, it is easy to see
$$\nabla Q^\epsilon\odot \nabla Q^\epsilon:\underline{\nabla \psi}=(\nabla Q^\epsilon\odot \nabla Q^\epsilon -\frac12 |\nabla Q^\epsilon|^2 I):\underline{\nabla \psi},$$
where $I$ is the identity matrix in $\mathbb{M}^{2\times 2}$.
Then, direct calculations give
\begin{equation*}
 \nabla Q^\epsilon\odot \nabla Q^\epsilon -\frac12 |\nabla Q^\epsilon|^2 I =\left(
                                                                              \begin{array}{cc}
                                                                                \frac12 (|\partial_1 Q^\epsilon|^2 -|\partial_2 Q^\epsilon |^2) &  \partial_1 Q^\epsilon:\partial_2 Q^\epsilon\\
                                                                                \partial_1 Q^\epsilon :\partial_2 Q^\epsilon &  -\frac12 (|\partial_1 Q^\epsilon|^2 -|\partial_2 Q^\epsilon |^2)\\
                                                                              \end{array}
                                                                            \right).
\end{equation*}
Therefore, it suffices to show that
\begin{equation}\label{q1}
  \int_{\mathbb{R}^2}(|\partial_1 Q^\epsilon|^2 -|\partial_2 Q^\epsilon |^2 )\phi dx \rightarrow   \int_{\mathbb{R}^2}(|\partial_1 Q^*|^2 -|\partial_2 Q^* |^2 )\phi dx
\end{equation}
and
\begin{equation}\label{q2}
   \int_{\mathbb{R}^2}\partial_1 Q^\epsilon :\partial_2 Q^\epsilon\phi dx\rightarrow \int_{\mathbb{R}^2}\partial_1 Q^*:\partial_2 Q^*\phi dx
\end{equation}
as $\epsilon \rightarrow 0$ for every $\phi\in C^\infty_0(\mathbb{R}^2,\mathbb{R})$.
We prove (\ref{q1}) and (\ref{q2}) by a Pohozaev type argument. First, we claim that there exists $R>0$ such that
\begin{equation}\label{No1}
  \int_{B_R(\mathbf{0})} \frac{\hat{F}_b(Q^\epsilon(t,x))}{\epsilon}dx \rightarrow 0 ,\quad t\in \mathcal{L}_\infty\cap \mathcal{A}_\infty, \text{ as } \epsilon \rightarrow 0.
\end{equation}
Note that this claim implies that
$$\int_{\mathbb{R}^2}\frac{\hat{F}_b(Q^\epsilon(t,x))}{\epsilon}dx \rightarrow 0,\quad t\in \mathcal{L}_\infty\cap \mathcal{A}_\infty, \text{ as } \epsilon \rightarrow 0$$
due to the assumption that $\mathcal{B}(t)=\{\mathbf{0}\}$ for $t\in \mathcal{L}_\infty\cap \mathcal{A}_\infty$. By Lemma \ref{Lm}, the small energy condition implies the strong convergence of $\frac{1}{\epsilon}\hat{F}_b(Q^\epsilon)$ in $L^1$ and $Q^\epsilon$ in $H^1$. Then, this and the energy inequality (\ref{Q-energy}) imply that $\frac{1}{\epsilon}\hat{F}_b(Q^\epsilon)$ converges strongly to $0$ in $L^1_{loc}(\mathbb{R}^2\setminus \mathcal{B}(t))$ for $t\in\mathcal{L}_\infty\cap \mathcal{A}_\infty$. Therefore, if (\ref{No1}) fails, one can assume that
\begin{equation}\label{ass1}
  \frac{\hat{F}_b(Q^\epsilon)}{\epsilon}dx \rightharpoonup \beta_1 \delta(\mathbf{0}), \quad \beta_1>0, \text{ as } \epsilon \rightarrow 0
\end{equation}
in $B_r(\mathbf{0})$ for any $r>0$, where $\delta(\textbf{0})$ is the dirac measure centered at the origin. Recall the definition of $H^\epsilon$:
\begin{equation}\label{H0}
  L_1 \Delta Q^\epsilon -\frac{\mathcal{J}(Q^\epsilon)}{\epsilon} =H^\epsilon.
\end{equation}
Multiplying (\ref{H0}) by $\frac{x}{r}\cdot \nabla Q^\epsilon$ and integrating over $B_r(\mathbf{0})$, one gets after integration by parts that
\begin{eqnarray*}
   && \int_{\partial B_r(\mathbf{0})} \left(L_1\left|\frac{\partial Q^\epsilon}{\partial \nu}\right|^2 -\frac{L_1}{2} |\nabla Q^\epsilon |^2 -\frac{\hat{F}_b(Q^\epsilon)}{\epsilon} \right) dS \\
 &=&  -\frac{1}{r}\int_{ B_r(\mathbf{0})}\left( 2\frac{\hat{F}_b(Q^\epsilon)}{\epsilon} -x\cdot\nabla Q^\epsilon :H^\epsilon\right) dx.
\end{eqnarray*}
Then, integrating above identity from $\tau$ to $R$ yields
\begin{eqnarray*}
   && 2\int_\tau^R \frac{1}{r}\int_{ B_r(\mathbf{0})}\frac{\hat{F}_b(Q^\epsilon)}{\epsilon} dx dr\\
   &=& -\int_{ B_R(\mathbf{0})\setminus B_\tau(\mathbf{0})} \left(L_1\left|\frac{\partial Q^\epsilon}{\partial \nu}\right|^2 -\frac{L_1}{2} |\nabla Q^\epsilon |^2 -\frac{\hat{F}_b(Q^\epsilon)}{\epsilon} \right)dx +\int_\tau^R \frac{1}{r}\int_{ B_r(\mathbf{0})}x\cdot\nabla Q^\epsilon :H^\epsilon dx dr.
\end{eqnarray*}
For $t\in \mathcal{L}_\infty\cap \mathcal{A}_\infty$, it holds that
$$\left |\frac{1}{r}\int_{ B_r(\mathbf{0})}x\cdot\nabla Q^\epsilon :H^\epsilon dx \right | \leq C ||\nabla Q^\epsilon||_{L^2}||H^\epsilon||_{L^2} \leq CE_0 $$
thanks to the energy inequality (\ref{Q-energy}) and the definition of $\mathcal{L}_\infty\cap \mathcal{A}_\infty$. Therefore,
\begin{equation}\label{cc1}
  \int_\tau^R \frac{1}{r}\int_{ B_r(\mathbf{0})}\frac{\hat{F}_b(Q^\epsilon)}{\epsilon} dxdr\leq 2E_0+ CE_0(R-\tau)
\end{equation}
for all $0<\tau<R<\infty$.

On the other hand, it follows from (\ref{Q-energy}) and (\ref{ass1}) that
$$\lim_{\epsilon\rightarrow 0}\int_\tau^R \frac{1}{r}\int_{ B_r(\mathbf{0})}\frac{\hat{F}_b(Q^\epsilon)}{\epsilon} dxdr= \int_\tau^R \frac{1}{r}\lim_{\epsilon\rightarrow 0}\int_{ B_r(\mathbf{0})}\frac{\hat{F}_b(Q^\epsilon)}{\epsilon} dxdr=\int_\tau^R \frac{\beta_1}{r}dr=\beta_1\ln{\left(\frac{R}{\tau}\right)}$$
by the Lebesgue dominated convergence theorem, which contradicts (\ref{cc1}) when $\tau$ is very small. This proves the claim.

We now verify (\ref{q1}). Otherwise, one can assume that there exists a real nonzero number $\beta_2$ such that
\begin{equation}\label{ass2}
  (|\partial_1 Q^\epsilon|^2 -|\partial_2 Q^\epsilon |^2)dx \rightharpoonup   (|\partial_1 Q^*|^2 -|\partial_2 Q^* |^2)dx+\beta_2\delta(\mathbf{0}) \text{ as } \epsilon \rightarrow 0
\end{equation}
in $B_r(\mathbf{0})$ for any $r>0$. Multiplying (\ref{H0}) by $\frac{x_1}{r}\partial_1 Q^\epsilon$ and integrating over $B_r(\mathbf{0})$, one can get through integration by parts that
\begin{eqnarray*}
   && \int_{\partial B_r(\mathbf{0})} \left(\frac{L_1x_1}{r}\partial_1 Q^\epsilon:\frac{\partial Q^\epsilon}{\partial \nu}-\frac{L_1}{2} \frac{x_1^2}{r^2}|\nabla Q^\epsilon|^2-\frac{x_1^2 \hat{F}_b(Q^\epsilon)}{r^2\epsilon} \right)dS \\
   &=&\frac{1}{r}  \int_{B_r(\mathbf{0})}\left[ \frac{L_1}{2}(|\partial_1  Q^\epsilon|^2 -|\partial_2 Q^\epsilon|^2) -\frac{\hat{F}_b(Q^\epsilon)}{\epsilon} +x_1\partial_1 Q^\epsilon :H^\epsilon \right].
\end{eqnarray*}
It follows from this and a similar way as for (\ref{cc1}) that
\begin{eqnarray}
   && \left| \lim_{\epsilon\rightarrow 0}\int_\tau^R \frac{L_1}{r}\int_{B_r(\mathbf{0})} (|\partial_1  Q^\epsilon|^2 -|\partial_2  Q^\epsilon|^2)dx dr\right| \nonumber\\
   &\leq& \int_\tau^R  \frac{1}{r}\lim_{\epsilon\rightarrow 0}\int_{B_r(\mathbf{0})} \frac{\hat{F}_b(Q^\epsilon)}{\epsilon}dxdr + 2E_0+CE_0(R-\tau)\leq E_0[2+C(R-\tau)] \label{cc2}
\end{eqnarray}
by the Lebesgue dominated convergence theorem, where $C_0$ is independent of $\epsilon$ and $\tau$.

On the other hand, it follows from the energy inequality (\ref{Q-energy}) and (\ref{ass2}) that
\begin{eqnarray*}
   &&\lim_{\epsilon\rightarrow 0}\int_\tau^R \frac{1}{r}\int_{ B_r(\mathbf{0})} (|\partial_1  Q^\epsilon|^2 -|\partial_2 Q^\epsilon|^2) dxdr=\int_\tau^R\frac{1}{r}\lim_{\epsilon\rightarrow 0} \int_{ B_r(\mathbf{0})} (|\partial_1  Q^\epsilon|^2 -|\partial_2 Q^\epsilon|^2) dxdr \\
  && \qquad\qquad\qquad\qquad \qquad\qquad\qquad\qquad=\int_\tau^R\frac{1}{r}\left[\int_{ B_r(\mathbf{0})} (|\partial_1  Q^*|^2 -|\partial_2 Q^*|^2)  dx +\beta_2\right]dr
\end{eqnarray*}
by the Lebesgue dominated convergence theorem. Then, let $r$ be small enough such that
$$\left|\int_{ B_r(\mathbf{0})} (|\partial_1  Q^*|^2 -|\partial_2 Q^*|^2)  dx \right| \leq \frac{|\beta_2|}{2}.$$
Therefore, one has
$$\left|\lim_{\epsilon\rightarrow 0}\int_\tau^R \frac{1}{r}\int_{ B_r(\mathbf{0})} (|\partial_1  Q^\epsilon|^2 -|\partial_2 Q^\epsilon|^2) dxdr \right| \geq \frac12\int_\tau^R \frac{|\beta_2|}{r}dr=\frac12|\beta_2|(\ln{R}-\ln{\tau})$$
for small $R$, which contradicts (\ref{cc2}) when $\tau$ is very small and $0<\tau<<R$. This proves (\ref{q1}).

Similarly, we can prove (\ref{q2}). Indeed, if (\ref{q2}) fails, one can assume that there exists a real nonzero number $\beta_3$ such that
\begin{equation}\label{ass3}
  \partial_1 Q^\epsilon :\partial_2 Q^\epsilon dx \rightharpoonup   \partial_1 Q^* :\partial_2 Q^* dx+\beta_3\delta(\mathbf{0}) \text{ as } \epsilon \rightarrow 0
\end{equation}
in $B_r(\mathbf{0})$ for any $r>0$. Multiplying (\ref{H0}) by $\frac{x_2}{r}\partial_1 Q^\epsilon$ and integrating over $B_r(\mathbf{0})$ yield
\begin{eqnarray*}
   && \int_{\partial B_r(\mathbf{0})} \left(\frac{L_1x_2}{r}\partial_1 Q^\epsilon:\frac{\partial Q^\epsilon}{\partial \nu}-\frac{L_1}{2} \frac{x_1x_2}{r^2}|\nabla Q^\epsilon|^2-\frac{x_1x_2 \hat{F}_b(Q^\epsilon)}{r^2\epsilon} \right)dS \\
   &=&\frac{1}{r}  \int_{B_r(\mathbf{0})}\left(L_1\partial_1  Q^\epsilon:\partial_2  Q^\epsilon +x_2\partial_1 Q^\epsilon :H^\epsilon \right)dx.
\end{eqnarray*}
As for the derivation of (\ref{cc1}), one can obtain that as $\epsilon\rightarrow 0$,
\begin{equation}\label{cc3}
  \left|\int_\tau^R\frac{L_1}{r}  \int_{B_r(\mathbf{0})}\partial_1  Q^\epsilon:\partial_2  Q^\epsilon dxdr\right|\leq 2E_0+CE_0(R-\tau).
\end{equation}
On the other hand, (\ref{Q-energy}) and (\ref{ass3}) imply that
\begin{eqnarray*}
    \lim_{\epsilon\rightarrow 0}\int_\tau^R \frac{1}{r}\int_{ B_r(\mathbf{0})} \partial_1  Q^\epsilon:\partial_2  Q^\epsilon dxdr&=&\int_\tau^R \frac{1}{r}\lim_{\epsilon\rightarrow 0}\int_{ B_r(\mathbf{0})} \partial_1  Q^\epsilon:\partial_2  Q^\epsilon dxdr \\
   &=&  \int_\tau^R \frac{1}{r}\left[\int_{ B_r(\mathbf{0})} \partial_1  Q^*:\partial_2  Q^* dx + \beta_3 \right]dr
\end{eqnarray*}
by the Lebesgue dominated convergence theorem. Then, let $r$ be small enough such that
$$\left|\int_{ B_r(\mathbf{0})} \partial_1  Q^*:\partial_2  Q^*\right|\leq \frac{|\beta_3|}{2}.$$
Therefore, one has
$$\left|\lim_{\epsilon\rightarrow 0}\int_\tau^R \frac{1}{r}\int_{ B_r(\mathbf{0})} \partial_1  Q^\epsilon:\partial_2  Q^\epsilon dxdr \right| \geq \frac12\int_\tau^R \frac{|\beta_3|}{r}dr=\frac12|\beta_3|(\ln{R}-\ln{\tau})$$
for small $R$, which contradicts (\ref{cc3}) when $\tau$ is very small and $0<\tau<<R$. This proves (\ref{q2}) and hence completes the proof of (\ref{claim1}).

\textbf{Step 2. We prove that $\bm{(v^*(\cdot,t),\nabla d^*(\cdot,t)) \rightarrow (v_0^*,\nabla d_0^*)}$ as $\bm{t\rightarrow 0}$  in $\bm{L^2(\mathbb{R}^2)}$.}
To this end, we prove first that
\begin{equation}\label{D0}
  \frac12||v^*(t,\cdot)||_{L^2(\mathbb{R}^2)}^2 +L_1s_+^2 ||\nabla d^*(t,\cdot)||_{L^2(\mathbb{R}^2)}^2 \leq \frac12||v^*_0||_{L^2(\mathbb{R}^2)}^2 +L_1s_+^2||\nabla d^*_0||_{L^2(\mathbb{R}^2)}^2\leq E_0
\end{equation}
for all $t\in [0,T]$, where $E_0$ is given in (\ref{E01}). The uniform estimates (\ref{v*t}) and (\ref{Q*t}) imply that $v^*_t\in L^2(0,T; H^{-3}(\tilde{\Omega}))$ and $ Q^*_t \in L^{\frac43}((0,T)\times\tilde{\Omega})$. Therefore, $v^*$ is weakly continuous on $H^3(\tilde{\Omega})$, thus
$$f_1(t) =\int_{\tilde{\Omega}} v^*(t,x)\cdot \tilde{\phi}(x) dx $$
is continuous in $[0,T]$ for any $\tilde{\phi} \in H^3(\tilde{\Omega})$. Similarly, $\nabla Q^*$ and $\nabla d^*$ are weakly continuous on $W^{1,4}(\tilde{\Omega})$. Note that $|Q^*_t|^2= 2s_+^2|d^*_t|^2$ and $|\nabla Q^*|^2= 2s_+^2 |\nabla d^*|^2$. Then, as shown in \cite[Chapter III, Lemma 1.4]{T01}, $v^*,\nabla Q^*$ and $ \nabla d^* \in L^\infty(0,T; L^2(\mathbb{R}^2)$ yield that
\begin{equation}\label{w-conti}
  v^*, \nabla Q^* \text{ and } \nabla d^* \text{ are weakly continuous on } L^2(\tilde{\Omega})
\end{equation}
for any smooth bounded domain $\tilde{\Omega}\subset \mathbb{R}^2$. Based on this fact, one has
\begin{equation*}
  ||v^*(t)||_{L^2(\tilde{\Omega})} \leq  ||v^*||_{L^\infty_tL^2_x},  \quad|| \nabla Q^*(t)||_{L^2(\tilde{\Omega})} \leq  ||\nabla Q^*||_{L^\infty_tL^2_x}
\end{equation*}
for all $t\in[0,T]$. Then, letting $\tilde{\Omega}=B_{R}$ and $R\rightarrow \infty$ yields
\begin{equation*}
  ||v^*(t)||_{L^2(\mathbb{R}^2)} \leq  ||v^*||_{L^\infty_tL^2_x},  \quad || \nabla Q^*(t)||_{L^2(\mathbb{R}^2)} \leq  ||\nabla Q^*||_{L^\infty_tL^2_x}
\end{equation*}
for all $t\in [0,T]$. Meanwhile, similar arguments as for (\ref{claimwired}) imply that
\begin{equation}\label{new-0}
  v^\epsilon(t) \rightharpoonup v^*(t) \text{ and } \nabla Q^\epsilon(t) \rightharpoonup \nabla Q^*(t) \text{  in  } L^2(\mathbb{R}^2) \text{  as  } \epsilon \rightarrow 0^+
\end{equation}
for any fixed $t\in \mathcal{L}_\infty\cap \mathcal{A}_\infty$. Then, it follows from the energy inequality (\ref{Q-energy}) and the lower semicontinuity that
$$\frac12||v^*(t)||_{L^2(\mathbb{R}^2)}^2 +\frac{L_1}{2} ||\nabla Q^*(t)||_{L^2(\mathbb{R}^2)}^2 \leq \frac12||v^*_0||_{L^2(\mathbb{R}^2)}^2 +\frac{L_1}{2} ||\nabla Q^*_0||_{L^2(\mathbb{R}^2)}^2$$
for any fixed $t\in \mathcal{L}_\infty\cap \mathcal{A}_\infty$. Since $|\mathcal{L}_\infty\cap \mathcal{A}_\infty|=T$, one has that
$$\frac12||v^*||_{L^\infty_tL^2_x}^2 +\frac{L_1}{2} ||\nabla Q^*||_{L^\infty_tL^2_x}^2 \leq \frac12||v^*_0||_{L^2(\mathbb{R}^2)}^2 +\frac{L_1}{2} ||\nabla Q^*_0||_{L^2(\mathbb{R}^2)}^2.$$
Therefore, one gets the desired (\ref{D0}).

Next, We show that $v^*$ and $\nabla d^*$ are weakly continuous to the initial data $v_0^*$ and $\nabla d_0^*$ as $t\rightarrow 0$ on $L^2(\tilde{\Omega})$.  Let
$$\tilde{\varphi}(t)=\left\{
                       \begin{array}{ll}
                         e^{\frac{1}{|t|^2-1}}, & \hbox{$|t|<1$;} \\
                         0, & \hbox{$|t|\geq 1$,}
                       \end{array}
                     \right.
$$
and $\overline{\varphi}=\tilde{\varphi}/\int_{\mathbb{R}} \tilde{\varphi}(t) dt$. Then, it is easy to see that $\overline{\varphi}$ is smooth and $\int_{\mathbb{R}} \overline{\varphi}(t) dt=1$. For $t_2\in (0,T)$ and $\tilde{\phi}\in \mathcal{D}$, set
$$\psi_{\tilde{\epsilon}}(t,x)=\tilde{\phi}(x) \left(\int_0^t\frac{1}{\tilde{\epsilon}}\overline{\varphi}(\frac{t_2-\tau}{\tilde{\epsilon}})d\tau-1\right).
$$
Note that
$|\int_0^t\frac{1}{\tilde{\epsilon}}\overline{\varphi}(\frac{t_2-\tau}{\tilde{\epsilon}})d\tau-1|\leq 1$ for all $t\geq0$ and
$$\lim_{\tilde{\epsilon}\rightarrow 0^+} \int_0^t\frac{1}{\tilde{\epsilon}}\overline{\varphi}(\frac{t_2-\tau}{\tilde{\epsilon}})d\tau-1= \left\{
                                            \begin{array}{ll}
                                              0, & \hbox{$t>t_2$;} \\
                                              -1 , & \hbox{$t<t_2$.}
                                            \end{array}
                                          \right.
$$
Then, it follows from (\ref{w-conti}) that
$$\lim_{\tilde{\epsilon}\rightarrow 0^+}\int_0^T\int_{\mathbb{R}^2} v^*(t,x)\cdot  \partial_t\psi_{\tilde{\epsilon}}(t,x) = \lim_{\tilde{\epsilon}\rightarrow 0^+}\int_0^T\frac{1}{\tilde{\epsilon}}\overline{\varphi}(\frac{t_2-t}{\tilde{\epsilon}})\int_{\mathbb{R}^2} v^*(t,x)\cdot  \tilde{\phi}(x)=\int_{\mathbb{R}^2} v^*(t_2,x)\cdot \tilde{\phi}(x)  .$$
Using this, taking $\psi_{\tilde{\epsilon}}$ into (\ref{Qstar1}) and sending $\tilde{\epsilon} \rightarrow 0^+$ yield that
\begin{eqnarray}
    &&  -\int_0^{t_2}\int_{\mathbb{R}^2} [ -(v^*\cdot \overline{\nabla\tilde{\phi}})\cdot v^*+\eta\overline{ D^*}:\overline{\nabla\tilde{\phi}}+ L_1\nabla Q^*\odot \nabla Q^*:\underline{\nabla \tilde{\phi}}] dxdt \nonumber \\
  &&-\int_0^{t_2}\int_{\mathbb{R}^2}[Q^* \cdot H^*-H^*\cdot Q^*-S_{Q^*}(H^*)]:\overline{\nabla \tilde{\phi}} dxdt\nonumber\\
 &=& \int_{\mathbb{R}^2} v^*(t_2)\cdot \tilde{\phi}(x)  dx- \int_{\mathbb{R}^2}v_0^*\cdot \tilde{\phi}(x) dx\nonumber
\end{eqnarray}
by (\ref{con1})-(\ref{con2}) and the Lebesgue dominated convergence theorem. Therefore, one has that
$$\lim_{t_2\rightarrow 0}\int_{\mathbb{R}^2} v^*(t_2,x)\cdot \tilde{\phi}(x)  dx = \int_{\mathbb{R}^2}v_0^*(x)\cdot \tilde{\phi}(x) dx,$$
which implies that
\begin{equation}\label{wtov}
  v^* \text{ is weakly continuous on } L^2(\tilde{\Omega}) \text{ to the initial data } v_0^*
\end{equation}
by (\ref{w-conti}). Similarly, for $\tilde{\zeta}\in C^\infty_0(\mathbb{R}^2, \mathcal{Q}_0)$, taking $\varphi_{\tilde{\epsilon}}(t,x)=\tilde{\zeta}(x) (\int_0^t\frac{1}{\tilde{\epsilon}}\overline{\varphi}(\frac{t_2-\tau}{\tilde{\epsilon}})d\tau-1)$ into (\ref{Qstar2}) and sending $\tilde{\epsilon} \rightarrow 0^+$, one can get
\begin{eqnarray}
   && \int_0^{t_2}\int_{\mathbb{R}^2} [(\underline{v^*}\cdot \nabla \tilde{\zeta}): Q^* +\frac{1}{\Gamma}H^*: \tilde{\zeta} ]-\int_0^{t_2}\int_{\mathbb{R}^2} [Q^*\cdot \overline{\Lambda^*}-\overline{\Lambda^*}\cdot Q^*-S_{Q^*}(\overline{D^*})]:\tilde{\zeta} \nonumber\\
&=& \int_{\mathbb{R}^2} Q^*(t_2,x):\tilde{\zeta}(x)dx-\int_{\mathbb{R}^2} Q_0^*:\tilde{\zeta}(x) dx  \nonumber
\end{eqnarray}
by (\ref{w-conti}), (\ref{con1})-(\ref{con2}) and the Lebesgue dominated convergence theorem. Therefore,
\begin{equation}\label{yaom}
  \lim_{t_2\rightarrow 0}\int_{\mathbb{R}^2} Q^*(t_2,x):\tilde{\zeta}(x)  dx = \int_{\mathbb{R}^2}Q_0^*: \tilde{\zeta}(x) dx.
\end{equation}
Note that (\ref{w-conti}) implies that $d^*$ is weakly continuous on $L^2(\tilde{\Omega})$. Due to $\nabla Q^* \in L^\infty_tL^2_x$, one can take $\tilde{\zeta}=(d_0^*\otimes d_0^*-\mathbb{I}) \hat{\zeta}$ with $\hat{\zeta}\in C^\infty_0(\mathbb{R}^2, \mathbb{R}$) into (\ref{yaom}), then
$$\lim_{t_2\rightarrow 0}\int_{\mathbb{R}^2} d^*(t_2,x)\cdot d_0^*(x) \hat{\zeta} dx = \int_{\mathbb{R}^2} d^*(0,x)\cdot d_0^*(x) \hat{\zeta} dx=\int_{\mathbb{R}^2} \hat{\zeta} dx,$$
which implies that $d^*$ is weakly continuous on $L^2(\tilde{\Omega})$ to the initial data $d_0^*$. Then, this together with (\ref{w-conti}), implies that
\begin{equation}\label{wtod}
  \nabla d^* \text{ is weakly continuous on } L^2(\tilde{\Omega}) \text{ to the initial data } \nabla d_0^*.
\end{equation}

Finally we show that $(v^*(\cdot,t),\nabla d^*(t,\cdot)) \rightarrow (v_0^*,\nabla d_0^*)$ as $t\rightarrow 0$  in $L^2(\mathbb{R}^2)$. If not, there exist a real number $\delta_4>0$ and a subsequence of $\{(v^*(t,\cdot), d^*(t,\cdot))\}_{t>0}$ such that
\begin{equation*}\label{ee}
  \lim_{t\rightarrow 0^+} \left(\frac12||v^*(t,\cdot)-v^*_0||_{L^2(\mathbb{R}^2)}^2 +L_1s_+^2||\nabla d^*(t,\cdot)-\nabla d^*_0||_{L^2(\mathbb{R}^2)}^2\right) \geq \delta_4>0,
\end{equation*}
which yields
\begin{eqnarray}
   && \lim_{t\rightarrow 0^+} \left[\int_{\mathbb{R}^2} (\frac12|v^*|^2 +L_1s_+^2|\nabla d^*|)dx -\int_{\mathbb{R}^2} (v^*\cdot v^*_0 +2L_1s_+^2\nabla d^*:\nabla d^*_0)dx \right] \nonumber\\
  &\geq&  \delta_4-\int_{\mathbb{R}^2} (\frac12|v^*_0|^2 +L_1s_+^2|\nabla d^*_0|)dx.\label{ee1}
\end{eqnarray}
It follows from (\ref{wtov}) and (\ref{wtod}) that
\begin{equation}\label{ee2}
  \lim_{t\rightarrow 0^+} \int_{\tilde{\Omega}} (v^*\cdot v^*_0 +2L_1s_+^2\nabla d^*:\nabla d^*_0)dx =\int_{\tilde{\Omega}} (|v^*_0|^2 +2L_1s_+^2|\nabla d^*_0|^2)dx
\end{equation}
for any smooth bounded domain $\tilde{\Omega}\subset \mathbb{R}^2$. Choose suitable big $\tilde{\Omega}$ such that
$$\int_{\mathbb{R}^2\setminus\tilde{\Omega}} (|v^*_0|^2 +2L_1s_+^2|\nabla d^*_0|^2)dx \leq \min\left\{ \frac{\delta_4^2}{32E_0}, \frac{\delta_4}{4}\right\},$$
where $E_0$ is given in (\ref{E01}). Based on this fact, it follows from (\ref{D0}) that
\begin{equation*}\label{D5}
  \left|\lim_{t\rightarrow 0^+} \int_{\mathbb{R}^2\setminus \tilde{\Omega}} (v^*\cdot v^*_0 +2L_1s_+^2\nabla d^*:\nabla d^*_0)dx\right| \leq \frac{\delta_4}{2}.
\end{equation*}
Then, substituting the above estimate and (\ref{ee2}) into (\ref{ee1}), one has
\begin{eqnarray*}
   && \lim_{t\rightarrow 0^+} \int_{\mathbb{R}^2} (\frac12|v^*|^2 +L_1s_+^2|\nabla d^*|)dx \geq  \int_{\mathbb{R}^2} (\frac12|v^*_0|^2 +L_1s_+^2|\nabla d^*_0|)+ \delta_4 \\
   && \quad - \int_{\mathbb{R}^2\setminus \tilde{\Omega}} (| v^*_0|^2 +2L_1s_+^2|\nabla d^*_0|^2)dx +\lim_{t\rightarrow 0^+} \int_{\mathbb{R}^2\setminus \tilde{\Omega}} (v^*\cdot v^*_0 +2L_1s_+^2\nabla d^*:\nabla d^*_0)dx \\
   && \qquad \qquad\geq \int_{\mathbb{R}^2} (\frac12|v^*_0|^2 +L_1s_+^2|\nabla d^*_0|)+ \frac{\delta_4}{4},
\end{eqnarray*}
which contradicts (\ref{D0}). Hence the desired conclusion follows.

\textbf{Step 3. We prove that the limit $\bm{(v^*,d^*)}$ satisfies the equalities (\ref{ELw1}) and (\ref{ELw2}).} We show first that $(v^*,d^*)$ satisfies the equalities (\ref{ELw2}). To this end, one can show first that
\begin{equation}\label{Qin}
  \int_0^T\int_{\mathbb{R}^2} d^*_t\cdot\zeta dxdt =-\int_{\mathbb{R}^2} d^*_0\cdot\zeta(0,\cdot) dx -\int_0^T\int_{\mathbb{R}^2} d^*\cdot\zeta_t dxdt.
\end{equation}
for any $\zeta\in C_0^\infty([0,T)\times \mathbb{R}^2,\mathbb{R}^3)$.
Note that $|\nabla Q^*|^2=2s_+^2|\nabla d^*|^2$ and $|Q^*_t|^2=2s_+^2 |d^*_t|^2$. Then, it follows from $\nabla Q^*\in L^\infty_tL^2_x$ and $Q^*_t\in L^{\frac43}((0,T)\times\tilde{\Omega})$ that $\nabla d^*\in L^\infty_tL^2_x$ and $d^*_t\in L^{\frac43}((0,T)\times\tilde{\Omega})$ for any smooth bounded domain $\tilde{\Omega}\subset \mathbb{R}^2$. By \textbf{Step 2} in this section, $d^*$ is weakly continuous on $L^2(\tilde{\Omega})$ to the initial data $d_0^*$. Therefore, (\ref{Qin}) holds.

It follows from (\ref{con1})-(\ref{con2}) and (\ref{Q*t}) that (\ref{Qstar2}) holds for $ \varphi \in L^4((0,T)\times\tilde{\Omega},\mathcal{Q}_0)$ with any smooth bounded domain $\tilde{\Omega}\subset \mathbb{R}^2$. Therefore, one can take $\varphi=d^*\otimes [\zeta -(d^* \cdot\zeta)d^* ]+[\zeta -(d^* \cdot\zeta)d^*]\otimes d^*$ in (\ref{Qstar2}) due to $d^*\in \mathbb{S}^2$ and $\zeta\in C^\infty_0([0,T)\times \mathbb{R}^2,\mathbb{R}^3)$. Then, one has
\begin{equation}\label{w-1}
  \int_0^T\int_{\mathbb{R}^2} Q^*:\varphi_t dxdt=-2s_+\int_0^T\int_{\mathbb{R}^2} d^*_t\cdot\zeta dxdt=2s_+\int_0^T\int_{\mathbb{R}^2} d^*\cdot\zeta_t dxdt+2s_+\int_{\mathbb{R}^2} d_0^*\cdot \zeta(0,\cdot)dx
\end{equation}
by using (\ref{Qin}). Note that
\begin{equation}\label{w-in}
  \int_{\mathbb{R}^2}Q_0^*:\varphi(0,\cdot) dx= 2\int_{\mathbb{R}^2}Q_0^*:\{d^*_0\otimes [\zeta(0,\cdot) -(d_0^* \cdot\zeta(0,\cdot))d_0^* ]\}dx=0.
\end{equation}
Since $\partial_1 v^\epsilon_1 +\partial _2 v^\epsilon_2=0$ for all $\epsilon>0$, therefore the limit $v^*$ of $v^\epsilon$ satisfies $\partial_1 v^*_1 +\partial _2 v^*_2=0$. Then, direct calculations yield that
\begin{equation}\label{w-2}
  \int_0^T\int_{\mathbb{R}^2} (\underline{v^*}\cdot\nabla \varphi): Q^*dxdt =-2s_+\int_0^T\int_{\mathbb{R}^2} (\underline{v^*}\cdot\nabla d^*)\cdot \zeta dxdt=2s_+\int_0^T\int_{\mathbb{R}^2}(\underline{v^*}\cdot \nabla \zeta) \cdot d^* dxdt,
\end{equation}
\begin{equation}\label{w-3}
 \int_0^T\int_{\mathbb{R}^2} (Q^*\cdot \overline{\Lambda^*}-\overline{\Lambda^*}\cdot Q^*):\varphi dxdt =-2s_+\int_0^T\int_{\mathbb{R}^2}  (\overline{\Lambda^*} \cdot d^*)\cdot \zeta dxdt,
\end{equation}
\begin{equation}\label{w-4}
   \int_0^T\int_{\mathbb{R}^2} S_{Q^*}(\overline{D^*}) :\varphi dxdt=\frac{2\xi(s_+ +2)}{3} [\overline{D^*}\cdot d^*-(\overline{D^*}:d^*\otimes d^*)d^* ]\cdot \zeta dxdt.
\end{equation}

It remains to calculate $\int_0^T\int_{\mathbb{R}^2} H^*:\varphi dxdt$. For any fixed $t\in \mathcal{L}_\infty\cap \mathcal{A}_\infty$, recall the definition of $\mathcal{B}(t)$ in \textbf{Step 1} in this section. For each $x_i(t) \in \mathcal{B}(t), 1\leq i\leq L(t)$, define a smooth function
\begin{equation*}\label{ssz}
 \chi_{r(t)}^{x_i(t)}(y)= \left\{
                             \begin{array}{ll}
                               1, & \hbox{$y\in B_{r(t)}(x_i(t))$ ;} \\
                               0, & \hbox{$y \notin B_{2r(t)}(x_i(t))$,}
                             \end{array}
                           \right.
\end{equation*}
with $0\leq \chi_{r(t)}^{x_i(t)} \leq 1, |\nabla \chi_{r(t)}^{x_i(t)}| \leq \frac{C}{r(t)}$. Then, one can choose $r(t)$ small enough such that $B_{2r(t)}(x_1(t))$, $\cdots$, $B_{2r(t)}(x_{L(t)}(t))$ are disjoint to each other. Then, it follows from $\varphi\in T_{Q^*}\mathcal{N}$ and Lemma \ref{Lm} that
\begin{eqnarray}
   &&  \int_{\mathbb{R}^2} (H^*: \varphi)(t,x) dx=\lim_{r(t)\rightarrow 0}\int_{\mathbb{R}^2} H^*(t,x): \varphi(t,x) \left(1-\sum_{i=1}^{L(t)} \chi_{r(t)}^{x_i(t)}\right) dx \nonumber\\
&=&\lim_{r(t)\rightarrow 0}\int_{\mathbb{R}^2}L_1 \Delta Q^*(t,x): \varphi(t,x) \left(1-\sum_{i=1}^{L(t)} \chi_{r(t)}^{x_i(t)}\right) dx\nonumber\\
&=&-2L_1s_+ \int_{\mathbb{R}^2} (\partial_k d^* \cdot\partial_k \zeta -|\nabla d^*|^2 d^*\cdot \zeta ) (t,x)dx, \label{w-5}
\end{eqnarray}
where one has used the geometric structure of $\mathcal{J}^*(t,x)$ for $x\in \mathbb{R}^2\setminus \mathcal{B}(t), t\in \mathcal{L}_\infty\cap \mathcal{A}_\infty$. Therefore, it follows from (\ref{Qstar2}) and (\ref{w-1})-(\ref{w-5}) that (\ref{ELw2}) holds true.

Next, we prove that $(v^*,d^*)$ satisfies the equality (\ref{ELw1}). By direct calculations, one has
\begin{equation}\label{Astar}
  \int_0^T\int_{\mathbb{R}^2}\nabla Q^*\odot \nabla Q^* : \underline{\nabla \psi} dxdt=2s_+^2\int_0^T\int_{\mathbb{R}^2}(\nabla d^* \odot \nabla d^*):\underline{\nabla \psi} dxdt
\end{equation}
and
\begin{eqnarray}
   &&[ Q^* \cdot H^*-H^*\cdot Q^*-S_{Q^*}(H^*)]:\overline{\nabla \psi}\nonumber \\
  &=&[(1-\xi)Q^* \cdot H^*-(1+\xi)H^*\cdot Q^* -\frac23 \xi H^* +2\xi (Q^*:H^*)(Q^*+\frac13 \mathbb{I})]:\overline{\nabla \psi}. \label{vq-1}
\end{eqnarray}
It follows from (\ref{con1})-(\ref{con2}), (\ref{Q*t}), (\ref{w-conti}) and (\ref{yaom}) that the equality (\ref{Qstar2}) can be rewritten as
\begin{equation}\label{Q*new}
  \int_0^T\int_{\mathbb{R}^2} H^*:\varphi dxdt=\Gamma\int_0^T\int_{\mathbb{R}^2}[Q^*_t +\underline{v^*}\cdot\nabla Q^* +Q^*\cdot \overline{\Lambda^*}-\overline{\Lambda^*}\cdot Q^*-S_{Q^*}(\overline{D^*}) ]:\varphi dxdt,
\end{equation}
where $\varphi\in L^4((0,T)\times \tilde{\Omega},\mathcal{Q}_0)$ for any smooth bounded domain $\tilde{\Omega}\subset \mathbb{R}^2$. Since $\nabla Q^*\in L^\infty_tL^2_x$ and $\psi\in C_0^\infty([0,T)\times\mathbb{R}^2,\mathbb{R}^3),\partial_1\psi_1 +\partial_2\psi_2=0$, it can be checked that the test function $\varphi$ can be taken as $\varphi= \frac12[Q^*\cdot \overline{\nabla \psi}+(\overline{\nabla \psi})^T\cdot Q^* -2(Q^*:\overline{\nabla \psi}) \mathbb{I}]$. Then, one can get that
\begin{eqnarray}
   &&\int_0^T\int_{\mathbb{R}^2} (Q^* \cdot H^*): \overline{\nabla \psi} dxdt = \frac12\int_0^T\int_{\mathbb{R}^2}  H^*: [Q^*\cdot \overline{\nabla \psi}+(\overline{\nabla \psi})^T\cdot Q^* -2(Q^*:\overline{\nabla \psi}) \mathbb{I}]dxdt \nonumber \\
   && =\Gamma s_+^2 \int_0^T\int_{\mathbb{R}^2}(\frac23 d^*\otimes \overline{N^*} -\frac13 \overline{N^*}\otimes d^*)  :\overline{\nabla \psi} dxdt\nonumber\\
   && \quad-\Gamma \xi\int_0^T\int_{\mathbb{R}^2} \left[ \frac{s_+^2(1-2s_+)}{3} (\overline{D^*}: d^*\otimes d^*)d^*\otimes d^* -\frac{2 s_+(1-s_+)}{9}\overline{D^*} \right]: \overline{\nabla \psi} dxdt \nonumber\\
   &&\quad-\Gamma \xi \int_0^T\int_{\mathbb{R}^2} \left(\frac{2s_+}{3} d^*\otimes d^*\cdot \overline{D^*} - \frac{s_+^2}{3}\overline{D^*}\cdot d^*\otimes d^* \right): \overline{\nabla \psi} dxdt. \label{vq-2}
\end{eqnarray}
where $\overline{N^*}=d_t^*+\underline{v^*}\cdot\nabla d^* -\overline{\Lambda^*}\cdot d^*$, and one has used the fact that $Q^*,H^*$ and $\overline{D^*}$ are symmetric with zero traces. Similarly, choosing $\varphi=\frac12[\overline{\nabla \psi} \cdot Q^*+Q^* \cdot (\overline{\nabla \psi})^T -2(Q^*:\overline{\nabla \psi})\mathbb{I}]$ leads to
\begin{eqnarray}
   &&\int_0^T\int_{\mathbb{R}^2} ( H^*\cdot Q^*): \overline{\nabla \psi} dxdt = \Gamma s_+^2 \int_0^T\int_{\mathbb{R}^2}(-\frac13 d^*\otimes \overline{N^*} +\frac23 \overline{N^*}\otimes d^*)  :\overline{\nabla \psi} dxdt \nonumber \\
   && -\Gamma \xi\int_0^T\int_{\mathbb{R}^2} \left[ \frac{s_+^2(1-2s_+)}{3} (\overline{D^*}: d^*\otimes d^*)d^*\otimes d^* -\frac{2 s_+(1-s_+)}{9}\overline{D^*} \right]: \overline{\nabla \psi} dxdt \nonumber\\
   &&-\Gamma \xi \int_0^T\int_{\mathbb{R}^2} \left(-\frac{s_+^2}{3} d^*\otimes d^*\cdot \overline{D^*} +\frac{2s_+}{3} \overline{D^*}\cdot d^*\otimes d^* \right): \overline{\nabla \psi} dxdt. \label{vq-3}
\end{eqnarray}
Next, choosing $\varphi=\frac12[\overline{\nabla \psi}+(\overline{\nabla \psi})^T]$, one can get
\begin{eqnarray}
   &&\int_0^T\int_{\mathbb{R}^2} H^*: \overline{\nabla \psi} dxdt = \Gamma s_+ \int_0^T\int_{\mathbb{R}^2}( d^*\otimes \overline{N^*}+ \overline{N^*}\otimes d^*)  :\overline{\nabla \psi} dxdt \nonumber \\
   && -\Gamma \xi\int_0^T\int_{\mathbb{R}^2} \left[ -2s_+^2 (\overline{D^*}: d^*\otimes d^*)d^*\otimes d^* +\frac{2(1-s_+)}{3}\overline{D^*} \right]: \overline{\nabla \psi} dxdt \nonumber\\
   &&-\Gamma \xi s_+\int_0^T\int_{\mathbb{R}^2} \left( d^*\otimes d^*\cdot \overline{D^*} + \overline{D^*}\cdot d^*\otimes d^* \right): \overline{\nabla \psi} dxdt. \label{vq-4}
\end{eqnarray}
Finally, choosing $\varphi=Q^*(Q^*+\frac13 \mathbb{I}) :\overline{\nabla \psi} $, one can get
\begin{eqnarray}
  &&  \int_0^T\int_{\mathbb{R}^2} (H^*:Q^*)(Q^*+\frac13 \mathbb{I}):\overline{\nabla \psi} dxdt \nonumber \\
  &=& -2\Gamma \xi \frac{s_+^2+s_+^3-2s_+^4}{3}\int_0^T\int_{\mathbb{R}^2} (\overline{D^*}:d^*\otimes d^*)d^*\otimes d^*: \overline{\nabla \psi} dxdt\label{vq-5}
\end{eqnarray}
Then, substituting (\ref{Astar})-(\ref{vq-5}) into the equality (\ref{Qstar1}), we obtain the equality (\ref{ELw1}).

\textbf{Step 4. This step is aim to prove that $\bm{(v^*,d^*)}$ satisfies the energy inequality (\ref{d-energy}).}
By the definition of $\mathcal{L}_\infty\cap \mathcal{A}_\infty$ in the \textbf{Step 1} in this section, it follows from the energy inequality (\ref{Q-energy}), (\ref{new-0}) and the lower semicontinuity that
\begin{eqnarray}
   && \int_{\mathbb{R}^2} \left(\frac12 |v^*|^2 +\frac{L_1}{2} |\nabla Q^*|^2 \right)(\cdot,t)dx +\int_0^t\int_{\mathbb{R}^2} \left(\eta|\overline{D^*}|^2 +\frac{1}{\Gamma}|H^*|^2\right)dxdt \nonumber\\
  &\leq& \int_{\mathbb{R}^2} \left(\frac12 |v^*_0|^2 +\frac{L_1}{2} |\nabla Q^*_0|^2 \right)dx \label{Q-star-energy}
\end{eqnarray}
for any $t\in  \mathcal{L}_\infty\cap \mathcal{A}_\infty$. The remaining task is to show that (\ref{d-energy}) and (\ref{Q-star-energy}) are equivalent for any $t\in  \mathcal{L}_\infty\cap \mathcal{A}_\infty$. To this end, due to $Q^*=s_+(d^*\otimes d^* -\frac13 \mathbb{I})$, it suffices to show that
\begin{eqnarray}
 &&\int_{\mathbb{R}^2}|H^*(t,x)|^2dx = \int_{\mathbb{R}^2}\left[\Gamma(\alpha_1+\frac{\gamma_2^2}{\gamma_1})|\overline{D^*}:d^*\otimes d^*|^2 +\Gamma(\alpha_5+\alpha_6-\frac{\gamma_2^2}{\gamma_1}) |\overline{D^*}\cdot d^*|^2 \right](t,x)dx\nonumber\\
   && \qquad\qquad +\int_{\mathbb{R}^2}\left[\frac{4\Gamma^2\xi^2(1-s_+)^2}{9}|\overline{D^*}|^2+2L_1^2s_+^2 |\Delta d^* +|\nabla d^*|^2d^*|^2\right](t,x)dx \label{last}
\end{eqnarray}
for any $t\in \mathcal{L}_\infty\cap \mathcal{A}_\infty$.

For fixed $t\in  \mathcal{L}_\infty\cap \mathcal{A}_\infty$ and $x\in\mathbb{R}^2\setminus\mathcal{B}(t)$, Lemma \ref{Lm} implies that
$$\mathcal{J}^*(x,t) = a_3(x,t) e_3 + a_4(x,t) e_4 +a_5(x,t) e_5,$$
where
$$e_3=\frac{1}{\sqrt{2}}(d_2\otimes d_1+d_1\otimes d_2),e_4=\frac{1}{\sqrt{2}}(d_1\otimes d_1-d_2\otimes d_2),$$
$$ e_5=\sqrt{6} (\frac12 d_1\otimes d_1 +\frac12 d_2\otimes d_2-\frac13 \mathbb{I}), $$
and $d_1,d_2\in \mathbb{S}^2, d_1\cdot d_2 =0, d_1\cdot d^*=0, d_2\cdot d^*=0$. Since $d_1,d_2$ and $d^*$ form an orthonormal basis to $\mathbb{R}^3$, one has that
\begin{equation}\label{d-1}
|\partial_k d^*|^2=|\partial_k d^*\cdot d_1|^2 +|\partial_k d^*\cdot d_2|^2,\quad k=1,2,
\end{equation}
\begin{equation}\label{d-2}
  |\overline{D^*}\cdot d_1|^2 =(\overline{D^*}:d_1\otimes d_1)^2+(\overline{D^*}:d_2\otimes d_1)^2+(\overline{D^*}:d^*\otimes d_1)^2,
\end{equation}
\begin{equation}\label{d-3}
|\overline{D^*}\cdot d_2|^2 =(\overline{D^*}:d_1\otimes d_2)^2+(\overline{D^*}:d_2\otimes d_2)^2+(\overline{D^*}:d^*\otimes d_2)^2,
\end{equation}
\begin{equation}\label{d-4}
|\overline{D^*}\cdot d^*|^2 =(\overline{D^*}:d_1\otimes d^*)^2+(\overline{D^*}:d_2\otimes d^*)^2+(\overline{D^*}:d^*\otimes d^*)^2,
\end{equation}
\begin{equation}\label{d-5}
|\overline{D^*}|^2=|\overline{D^*}\cdot d_1|^2 +|\overline{D^*}\cdot d_2|^2+|\overline{D^*}\cdot d^*|^2.
\end{equation}
For any $\phi\in C_0^\infty([0,T)\times \mathbb{R}^2, \mathbb{R})$, one can take $\varphi=e_3\phi$ into the equality (\ref{Q*new}) due to $|d_1|=|d_2|=1$. Then direct calculations yield that
\begin{eqnarray*}
   0&=&\int_{\mathcal{L}_\infty\cap \mathcal{A}_\infty}\int_{\mathbb{R}^2\setminus \mathcal{B}(t)}\left[\frac{1}{\Gamma}(L_1 \Delta Q^* -\mathcal{J}^*) +S_{Q^*}(\overline{D^*})\right]: e_3\phi dxdt \\
   &=& \int_{\mathcal{L}_\infty\cap \mathcal{A}_\infty}\int_{\mathbb{R}^2\setminus \mathcal{B}(t)}\left[\sqrt{8}\frac{L_1}{\Gamma}s_+(\partial_k d^*\cdot d_1)(\partial_k d^*\cdot d_2)-\frac{a_3}{\Gamma} +\sqrt{8}\xi\frac{1-s_+}{3}\overline{D^*}:d_2\otimes d_1\right]\phi .
\end{eqnarray*}
This means
$$a_3= 2\sqrt{2}L_1s_+(\partial_k d^*\cdot d_1)(\partial_k d^*\cdot d_2)-2\sqrt{2}\Gamma\xi\frac{s_+-1}{3}\overline{D^*}:d_2\otimes d_1 $$
in $(\mathcal{L}_\infty\cap \mathcal{A}_\infty)\times(\mathbb{R}^2\setminus \mathcal{B}(t))$. Similarly, it holds that
$$a_4=\sqrt{2}L_1s_+ \sum_{k=1}^2(|\partial_k d^*\cdot d_1|^2 -|\partial_k d^*\cdot d_2|^2) -\sqrt{2}\Gamma\xi\frac{s_+-1}{3}\overline{D^*}:(d_1\otimes d_1-d_2\otimes d_2)$$
and
\begin{equation}\label{a5}
a_5= \sqrt{6}L_1s_+ |\nabla d^*|^2 +\frac{\sqrt{6}\Gamma \xi(2s_+^2-s_+-1)}{3}\overline{D^*}:d^*\otimes d^*
\end{equation}
in $(\mathcal{L}_\infty\cap \mathcal{A}_\infty)\times(\mathbb{R}^2\setminus \mathcal{B}(t))$, where one has used the equality (\ref{d-1}), $\mathbb{I}=d_1\otimes d_1 + d_2\otimes d_2+d^*\otimes d^*$ and $\overline{D^*}_{ii}=0$ in (\ref{a5}). Set
$$b_1=-2\sqrt{2}\Gamma\xi\frac{s_+-1}{3}\overline{D^*}:d_2\otimes d_1, \quad b_2=-\sqrt{2}\Gamma\xi\frac{s_+-1}{3}\overline{D^*}:(d_1\otimes d_1-d_2\otimes d_2),$$
$$b_3=\frac{\sqrt{6}\Gamma \xi(2s_+^2-s_+-1)}{3}\overline{D^*}:d^*\otimes d^*, \quad c_1=a_3-b_1= 2\sqrt{2}L_1s_+(\partial_k d^*\cdot d_1)(\partial_k d^*\cdot d_2),$$
and
$$c_2= a_4-b_2=\sqrt{2}L_1s_+ \sum_{k=1}^2(|\partial_k d^*\cdot d_1|^2 -|\partial_k d^*\cdot d_2|^2), \quad c_3=a_5-b_3=\sqrt{6}L_1s_+ |\nabla d^*|^2.$$
Then, it follows from the definition of $Q^*,\mathcal{J}^*, a_i$ ($i=3,4,5$), $b_j$ and $c_j$ ($j=1,2,3$) and direct calculations that for $(t,x)\in (\mathcal{L}_\infty\cap \mathcal{A}_\infty)\times(\mathbb{R}^2\setminus \mathcal{B}(t))$, it holds that
\begin{eqnarray*}
  |L_1\Delta Q^* - \mathcal{J}^*|^2 &=& L_1^2 (\Delta Q^*: \Delta Q^*) -2L_1(\Delta Q^*:\mathcal{J}^*) +(\mathcal{J}^*:\mathcal{J}^*) \\
   &=& 2L_1^2 s_+^2 \left(|\Delta d^*|^2 +|\nabla d^*|^4 +2 \sum_{k,l=1}^2 |\partial_k d^*\cdot \partial_l d^*|^2\right)\\
   &&-2(a_3c_1 +a_4c_2 +a_5c_3) +a_1^2 +a_2^2 +a_3^2\\
   &=&2L_1s_+^2 \left(|\Delta d^*|^2 +|\nabla d^*|^4 +2 \sum_{k,l=1}^2 |\partial_k d^* \cdot \partial_l d^*|^2\right)\\
   && -(c_1^2 +c_2^2 +c_3^2) +b_1^2 +b_2^2 +b_3^2\\
   &=&2L_1s_+^2 (|\Delta d^*|^2 -|\nabla d^*|^4)+b_1^2 +b_2^2 +b_3^2\\
   && +4L_1^2 s_+^2 \sum_{k,l=1}^2|\partial_k d^* \cdot \partial_l d^*|^2 -(c_1^2 +c_2^2 + 2L^2_1s_+^2 |\nabla d^*|^4).
\end{eqnarray*}
Using (\ref{d-1}) and the fact that
$$\partial_k d^* \cdot \partial_l d^* =(d_1\cdot \partial _k d^*)(d_1 \cdot \partial_l d^*) +(d_2 \cdot\partial_k d^*)(d_2 \cdot \partial_l d^*),$$
one can calculate that
$$4L_1^2 s_+^2 \sum_{k,l=1}^2|\partial_k d^* \cdot \partial_l d^*|^2 -(c_1^2 +c_2^2 + 2L^2_1s_+^2 |\nabla d^*|^4)=0.$$
Thus, one obtains that on $(\mathcal{L}_\infty\cap \mathcal{A}_\infty)\times(\mathbb{R}^2\setminus \mathcal{B}(t))$,
\begin{eqnarray}
   |L_1\Delta Q^* - \mathcal{J}^*|^2 &=& 2L_1s_+^2 (|\Delta d^*|^2 -|\nabla d^*|^4)+b_1^2 +b_2^2 +b_3^2 \nonumber\\
   &=& 2L_1s_+^2 \left|\Delta d^* +|\nabla d^*|^2d^*\right|^2+b_1^2 +b_2^2 +b_3^2. \label{Z-1}
\end{eqnarray}
It remains to calculate $b_1^2 +b_2^2+b_3^2$. It follows from the definitions of $b_i$ ($i=1,2$) that
\begin{equation}\label{Z-2}
   b_1^2+b_2^2=\frac{2\Gamma^2 \xi^2(1-s_+)^2}{9}\left[4(\overline{D^*}: d_2\otimes d_1)^2+ (\overline{D^*} :d_1\otimes d_1- \overline{D^*}: d_2\otimes d_2)^2 \right].
\end{equation}
Note that direct computations yield that
\begin{eqnarray}
   &&  (\overline{D^*} :d_1\otimes d_1- \overline{D^*}: d_2\otimes d_2)^2 \nonumber\\
   &=& -(\overline{D^*} :d_1\otimes d_1+\overline{D^*}: d_2\otimes d_2)^2 +2(\overline{D^*}:d_1\otimes d_1)^2+2(\overline{D^*}:d_2\otimes d_2)^2. \label{Z-3}
\end{eqnarray}
Since $d_1\otimes d_1+d_2\otimes d_2+d^*\otimes d^*=\mathbb{I}$ and $\overline{D^*}_{ii}=0$, so
\begin{equation}\label{Z-4}
  \overline{D^*}:(d_1\otimes d_1 +d_2\otimes d_2)=-\overline{D^*} :d^*\otimes d^*.
\end{equation}
(\ref{Z-3}) and (\ref{Z-4}) yield
\begin{equation}\label{Z-5}
   (\overline{D^*} :d_1\otimes d_1- \overline{D^*}: d_2\otimes d_2)^2= 2(\overline{D^*}:d_1\otimes d_1)^2+2(\overline{D^*}:d_2\otimes d_2)^2-(\overline{D^*} :d^*\otimes d^*)^2.
\end{equation}
On the other hand, (\ref{d-2}) and (\ref{d-3}) imply that
$$ (\overline{D^*}:d_1\otimes d_1)^2+(\overline{D^*}:d_2\otimes d_1)^2=|\overline{D^*}\cdot d_1|^2-(\overline{D^*}:d^*\otimes d_1)^2,$$
$$ (\overline{D^*}:d_1\otimes d_2)^2+(\overline{D^*}:d_2\otimes d_2)^2=|\overline{D^*}\cdot d_2|^2-(\overline{D^*}:d^*\otimes d_2)^2.$$
These, together (\ref{Z-5}), (\ref{d-4}) and (\ref{d-5}), imply that
\begin{eqnarray*}
   && 4(\overline{D^*}: d_2\otimes d_1)^2+ (\overline{D^*} :d_1\otimes d_1- \overline{D^*}: d_2\otimes d_2)^2 \\
  &=& 2(\overline{D^*} :d_1\otimes d_1)^2 +2(\overline{D^*} :d_2\otimes d_2)^2 +4(\overline{D^*}: d_2\otimes d_1)^2 -(\overline{D^*}:d^*\otimes d^*)^2\\
   &=& 2(|\overline{D^*}|^2 -2|\overline{D^*}\cdot d^*|^2)+(\overline{D^*}:d^*\otimes d^*)^2.
\end{eqnarray*}
This and (\ref{Z-2}) yield that
\begin{equation}\label{key-bb}
  b_1^2+b_2^2= \frac{2\Gamma^2\xi^2(1-s_+)^2}{9}(2|\overline{D^*}|^2-4|\overline{D^*}\cdot d^*|^2 +|\overline{D^*}:d^*\otimes d^*|^2 ).
\end{equation}
Consequently,
\begin{eqnarray}
  b_1^2 +b_2^2 +b_3^2 &=&  \frac{4\Gamma^2\xi^2(1-s_+)^2}{9}|\overline{D^*}|^2-\frac{8\Gamma^2 \xi^2 (1-s_+)^2}{9}|\overline{D^*}\cdot d^*|^2 \nonumber\\
   &&+\frac{(1-s_+)^2 +3(2s_+-s_+-1)^2}{9} 2\Gamma^2 \xi^2(\overline{D^*}: d^*\otimes d^*)^2\nonumber \\
   &=& \frac{4\Gamma^2\xi^2(1-s_+)^2}{9}|\overline{D^*}|^2 +\Gamma (\alpha_5+\alpha _6 -\frac{\gamma_2^2}{\gamma_1})|\overline{D^*}\cdot d^*|^2\nonumber\\
   && +\Gamma (\alpha_1 +\frac{\gamma_2^2}{\gamma_1})(\overline{D^*}: d^*\otimes d^*)^2. \label{Z-7}
\end{eqnarray}

Therefore, for any $t\in \mathcal{L}_\infty\cap \mathcal{A}_\infty$, we have from (\ref{Z-1}) and (\ref{Z-7}) that
\begin{eqnarray*}
 \int_{\mathbb{R}^2\setminus \mathcal{B}(t)}|H^*|^2dx&=& \int_{\mathbb{R}^2\setminus \mathcal{B}(t)} \left[2L_1^2s_+^2|\Delta d^*+|\nabla d^*|^2 d^*|^2 +b_1^2+b_2^2+b_3^2\right]dx \\
   &=& \int_{\mathbb{R}^2\setminus \mathcal{B}(t)}\left[2L_1^2s_+^2 |\Delta d^* +|\nabla d^*|^2d^*|^2 +\Gamma(\alpha_1+\frac{\gamma_2^2}{\gamma_1})|\overline{D^*}:d^*\otimes d^*|^2  \right]dx\\
   && +\int_{\mathbb{R}^2\setminus \mathcal{B}(t)}\left[\frac{4\Gamma^2\xi^2(1-s_+)^2}{9}|\overline{D^*}|^2+\Gamma(\alpha_5+\alpha_6-\frac{\gamma_2^2}{\gamma_1}) |\overline{D^*}\cdot d^*|^2\right]dx.
\end{eqnarray*}
Note $\int_{\mathbb{R}^2\setminus \mathcal{B}(t)}|H^*|^2dx=\int_{\mathbb{R}^2}|H^*|^2dx$. Thus, the desired (\ref{last}) is proved.

\section*{Acknowledgments.} {Z. Xin was supported in part by Zheng Ge Ru Foundation, HongKong RGC Earmarked Research Grants CUHK14305315, CUHK14302819, CUHK14300917, CUHK14302917, and by Guangdong Province Basic and Applied Basic Research Fund 2020B1515310002. X. Zhang was supported in part by the National Natural Science Foundation of China grant 11901209, by the National Postdoctoral Program for Innovative Talents(BX20200135) of China, by China Postdoctoral Science Foundation(2020M682745), by the Natural Science Foundation of Guangdong Province grant 2019A1515011621, by the Guangdong Province Basic and Applied Basic Research Fund 2020B1515310005.}

\end{document}